\newtheorem{theorem}{Theorem}
\newtheorem{lemma}[theorem]{Lemma}
\newtheorem{corollary}[theorem]{Corollary}
\newtheorem{conjecture}[theorem]{Conjecture}
\theoremstyle{definition}
\newtheorem{definition}[theorem]{Definition}
\newtheorem{remark}[theorem]{Remark}
\newtheorem{assumption}[theorem]{Assumption}
\newtheorem*{remark*}{Remark}
\renewcommand{\le}{\leqslant}
\renewcommand{\ge}{\geqslant}
\renewcommand{\Pr}{\mathbb{P}}
\newcommand{\E}{\mathbb{E}}
\newcommand{\eps}{\varepsilon}
\newcommand{\cc}{\mathrm{c}}
\newcommand{\bb}[1]{\bigl( #1 \bigr)}
\newcommand{\floor}[1]{\lfloor #1\rfloor}
\newcommand{\Gnp}{{G_{n,p}}}
\newcommand{\Gnh}{{G_{n,1/2}}}
\newcommand{\N}{\mathbb{N}}
\newcommand{\R}{\mathbb{R}}
\newcommand{\Pb}{\mathbb{P}}
\newcommand{\muexp}{{\theta}}
\newcommand{\dfdn}{\frac{\mathrm{d}f}{\mathrm{d}n}}
\newcommand{\ts}{\tilde{s}}
\newcommand{\ttt}{\tilde{t}} 
\newcommand{\dd}{\mathrm{d}}
\newcommand{\dx}{\dd x}
\newcommand{\dy}{\dd y}
\newcommand{\dt}{\dd t}
\newcommand{\dm}{\dd m}
\newcommand{\bp}{\mathbf{p}}
\newcommand{\tP}{\widetilde{P}}
\newcommand{\tL}{\widetilde{L}}
\newcommand{\hL}{\widehat{L}}
\newcommand\dto{\overset{\mathrm{d}}{\to}}
\newcommand{\cC}{\mathcal{C}}
\DeclareMathOperator{\Var}{Var}
\begin{document}

\title{How does the chromatic number of a random graph vary?}
\date{August 17, 2023}
\author{Annika Heckel
\thanks{Matematiska institutionen, Uppsala universitet, Box 480, 751 06 Uppsala, Sweden. E-mail: 
\texttt{annika.heckel@math.uu.se}. This author's research was funded by ERC Grants 676632-RanDM and 772606-PTRCSP.
}
\and Oliver Riordan
\thanks{Mathematical Institute, University of Oxford, Radcliffe Observatory Quarter, Woodstock Road, Oxford OX2 6GG, UK. E-mail: {\tt riordan@maths.ox.ac.uk}.}}

\maketitle

\begin{abstract}
The chromatic number $\chi(G)$ of a graph $G$ is a fundamental parameter, whose study was originally motivated by applications ($\chi(G)$ is the minimum number of internally compatible groups the vertices can be divided into, if the edges represent incompatibility). As with other graph parameters, it is also studied from a purely theoretical point of view, and here a key question is: what is its typical value? More precisely, how does $\chi(G_{n,1/2})$, the chromatic number of a graph chosen uniformly at random from all graphs on $n$ vertices, behave?

This quantity is a random variable, so one can ask (i) for upper and lower bounds on its typical values, and (ii) for bounds on how much it varies: what is the width (e.g., standard deviation) of its distribution? On (i) there has been considerable progress over the last 45 years; on (ii), which is our focus here, remarkably little. 
One would like both upper and lower bounds on the width of the distribution, and ideally a description of the (appropriately scaled) limiting distribution. There is a well known upper bound of Shamir and Spencer of order $\sqrt{n}$, improved slightly by Alon to $\sqrt{n}/\log n$, but no non-trivial lower bound was known until 2019, when the first author proved that the width is at least $n^{1/4-o(1)}$ for infinitely many $n$, answering a longstanding question of Bollob\'as.

In this paper we have two main aims: first, we shall prove a much stronger lower bound on the width. We shall show unconditionally that, for some values of $n$, the width is at least $n^{1/2-o(1)}$, matching the upper bounds up to the error term. Moreover, conditional on a recently announced sharper explicit estimate for the chromatic number, we improve the lower bound to order $\sqrt{n} \log \log n /\log^3 n$, within a logarithmic factor of the upper bound.

Secondly, we will describe a number of conjectures as to what the true behaviour of the variation in $\chi(G_{n,1/2})$ is, and why. The first form of this conjecture arises from recent work of Bollob\'as, Heckel, Morris, Panagiotou, Riordan and Smith. We will also give much more detailed conjectures, suggesting that the true width, for the worst case $n$, matches our lower bound up to a constant factor. These conjectures also predict a Gaussian limiting distribution.
\end{abstract}

\section{Introduction}

Given a graph $G$, a \emph{colouring} of $G$ is an assignment of colours to the vertices of $G$ so that no two adjacent vertices are coloured the same. The smallest number of colours for which this is possible is called the \emph{chromatic number} of $G$, and is denoted by $\chi(G)$. This graph parameter plays a very important role in applications, in particular in assignment problems.
Here, however, we focus on $\chi(G)$ from a theoretical point of view, simply as a natural and fundamental parameter of a graph.

As with any important graph parameter, an interesting question is: what is its typical value, if we choose $G$ uniformly at random from all graphs on $n$ (labelled) vertices? Also, how much does the chromatic number fluctuate around this critical value? Given $n \in \N$ and $p \in [0,1]$, the \emph{binomial random graph} $\Gnp$ is the graph on $n$ labelled vertices where each possible edge is included independently with probability $p$, so a uniformly random graph on $n$ vertices is simply $G_{n,1/2}$. The question just described 
was raised (in the sparse setting) by Erd\H{o}s and R\'enyi~\cite{erdos1960evolution}, in one of their seminal papers which initiated the study of random graphs. Erd\H{o}s later posed this question for the dense case, see Bollob\'as~\cite{bollobas:concentrationfixed}.
In this section we first outline the history of this problem, concentrating on the most relevant results. Then we state our new results. Finally, we present a number of conjectures as to the true behaviour of $\chi(G_{n,1/2})$, in various levels of detail. The basic conjecture is due to Bollob\'as, Morris, Panagiotou and Smith together with the present authors; the finer conjectures are new.

\subsection{Past results and questions}

In 1975, Grimmett and McDiarmid~\cite{grimmett1975colouring} found the likely order of magnitude of $\chi(G_{n,p})$ for $0<p<1$ constant. In a landmark contribution in 1987, Bollob\'as~\cite{bollobas1988chromatic} determined the asymptotic behaviour of $\chi(G_{n,p})$ in this case. In stating this result we follow a standard convention, writing $q$ for $1-p$ and $b$ for $1/q=1/(1-p)$ to make the formulae more compact.

\begin{theorem}[\cite{bollobas1988chromatic}]\label{theorem:bollobas87}
Let $0<p<1$ be constant, and let $b=1/(1-p)$. With high probability\footnote{As usual, we say that a sequence $(E_n)_{n \in \N}$ of events holds \emph{with high probability (whp)} if $\Pb(E_n) \rightarrow 1$ as $n \rightarrow \infty$.},
\[
\pushQED{\qed} 
 \chi(G_{n,p}) \sim \frac{n}{2 \log_b n}.
 \qedhere
\popQED
\]
\end{theorem}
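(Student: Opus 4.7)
The plan is to prove matching upper and lower bounds of the form $(1\pm o(1))\,n/(2\log_b n)$. The lower bound is the easy direction: a first moment calculation shows that the expected number of independent sets of size $k$ in $\Gnp$ is $\binom{n}{k} q^{\binom{k}{2}}$, which tends to $0$ once $k \geq (2+\eps)\log_b n$ for any fixed $\eps>0$. Hence whp $\alpha(\Gnp) \le (2+o(1))\log_b n$, and the trivial bound $\chi(G) \ge n/\alpha(G)$ gives $\chi(\Gnp) \ge (1-o(1)) n/(2\log_b n)$.

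For the upper bound I would follow Bollob\'as's original strategy. The core is a concentration statement for the independence number of $G_{m,p}$, proved via the Azuma--Hoeffding inequality applied to the vertex-exposure martingale of the random variable $Y(G) = $ maximum size of a family of pairwise edge-disjoint independent sets of size $k$, where $k = \lfloor 2\log_b m - 5\log_b \log_b m\rfloor$. The point of replacing $\alpha$ by $Y$ is that $Y$ is Lipschitz (changing the edges incident to one vertex can only change $Y$ by at most $1$) while $\E Y$ is reasonably large by a second moment / Janson-type computation, since the expected number of independent $k$-sets is $m^{3\log_b\log_b m + O(1)}$. Azuma then yields $\Pr(Y=0)\le \exp(-m^{2-o(1)})$, which is vastly smaller than $2^{-n}$ when $m$ is only mildly smaller than $n$.

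Equipped with this tail bound, I would take $m = \lceil n/\log^2 n\rceil$ and union-bound over all $\binom{n}{m}\le 2^n$ vertex subsets of size $m$ to conclude that whp \emph{every} induced subgraph of $\Gnp$ on $m$ vertices contains an independent set of size $k = (2-o(1))\log_b n$. A greedy stripping procedure then colours $\Gnp$: repeatedly peel off an independent set of size $k$ from the current graph until fewer than $m$ vertices remain, and finally assign each of the remaining vertices its own colour. This uses at most
\[
 \frac{n-m}{k} + m \;=\; (1+o(1))\,\frac{n}{2\log_b n}
\]
colours, because $m = o(n/\log n)$ is negligible compared with the main term.

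The main obstacle is the concentration step: controlling the lower tail of $\alpha(G_{m,p})$ (or of the edge-disjoint-packing surrogate $Y$) with enough room to absorb the $\binom{n}{m}$ union bound. The Lipschitz property is immediate for $Y$ but not for $\alpha$ itself, which is why one works with $Y$; the nontrivial input is the lower bound on $\E Y$, obtained by showing via a second moment argument that a positive fraction of independent $k$-sets can be chosen to be edge-disjoint. Everything else --- the first moment lower bound and the greedy colouring --- is routine once this concentration estimate is in hand.
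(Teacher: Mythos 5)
Your overall skeleton is the right one --- it is essentially Bollob\'as's original argument (the paper itself gives no proof, citing \cite{bollobas1988chromatic}): first moment for the lower bound, a very strong lower-tail bound for the independence number of $G_{m,p}$ via an edge-disjoint-packing surrogate plus Azuma, a union bound over all $m$-subsets with $m\approx n/\log^2 n$, and greedy stripping. But the concentration step as you state it fails. The random variable $Y$ (maximum number of pairwise edge-disjoint independent $k$-sets) is \emph{not} vertex-Lipschitz with constant $1$: edge-disjoint $k$-sets may pairwise share a single vertex, so up to about $m/k$ members of an optimal family can all pass through one vertex $v$, and changing the edges at $v$ can change $Y$ by that much. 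The constant-$1$ Lipschitz property holds per \emph{edge} (each pair of vertices lies in at most one member of an edge-disjoint family), so one must run Azuma on the edge-exposure martingale with $\binom{m}{2}$ steps, giving $\Pr(Y=0)\le\exp\bigl(-(\E Y)^2/(2\binom{m}{2})\bigr)$. This is not cosmetic: with the true vertex-Lipschitz constant $\Theta(m/\log m)$, vertex exposure only yields roughly $\exp(-m/\mathrm{polylog}\,m)$, which cannot absorb the $\binom{n}{m}\le 2^n$ union bound, whereas edge exposure gives the needed $\exp(-m^{2-o(1)})$ provided $\E Y\ge m^{2-o(1)}$. (Sanity check that your claim must be wrong: a Lipschitz-$1$ vertex martingale would give $\exp(-\Theta(m^3/\mathrm{polylog}\,m))$, below the trivial floor $\Pr(\text{no independent }k\text{-set})\ge p^{\binom{m}{2}}=\exp(-\Theta(m^2))$.)

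The second problem is your description of the key input, the lower bound on $\E Y$. With your $k=\lfloor 2\log_b m-5\log_b\log_b m\rfloor$ the number of independent $k$-sets is $m^{\Theta(\log\log m)}$, while \emph{any} edge-disjoint family has size at most $\binom{m}{2}/\binom{k}{2}=O(m^2/\log^2 m)$; so ``a positive fraction of independent $k$-sets can be chosen to be edge-disjoint'' is impossible, and the size of $\E X_k$ by itself says nothing about $\E Y$. What one actually needs is $\E Y\ge m^{2-o(1)}$, i.e.\ a packing within polylogarithmic factors of the trivial maximum $m^2/k^2$. One way (in the spirit of Bollob\'as): let $X$ count independent $k$-sets and $W$ count pairs of them sharing an edge, so $\E W=\Theta(\E[X]^2 k^4/m^2)$ with the $2$-vertex overlaps dominating for your choice of $k$; keep each $k$-set independently with probability $\theta$ and delete one set from each surviving conflicting pair, so $\E Y\ge \theta\E X-\theta^2\E W$, and optimizing $\theta$ gives $\E Y\ge (\E X)^2/(4\E W)=\Omega(m^2/k^4)$, whence $\Pr(Y=0)\le\exp(-\Omega(m^2/\log^8 m))$. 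Note that the naive deletion argument without thinning does not work here because of granularity: $\E X_k$ jumps by factors $m^{1-o(1)}$ as $k$ changes by $1$, so you cannot always place it just below $m^2/k^4$ while keeping it of size $m^{2-o(1)}$. With these two repairs (edge exposure, and the correct packing lower bound), the rest of your outline --- the first moment lower bound, the union bound, and the greedy colouring --- goes through as you describe.
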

Formally, this means that for any constant $\eps>0$, with high probability $\chi(G_{n,p})$ is between $1-\eps$ and $1+\eps$ times the bound on the right-hand side.

Theorem~\ref{theorem:bollobas87} has been sharpened several times~\cite{mcdiarmid1989method,panagiotou2009note,fountoulakis2010t}, most recently in~\cite{heckel2018chromatic}.
\begin{theorem}[\cite{heckel2018chromatic}]\label{theorem:bounds}
Fix $p \le 1-1/e^2$. Then, whp,
\begin{equation}\label{eq:estimate}
 \chi(\Gnp)=\frac{n}{2\log_b n - 2 \log_b \log_b n-2\log_b 2}+o \left( \frac{n}{\log^2 n}\right)
\end{equation}
where $b=1/(1-p)$.
\end{theorem}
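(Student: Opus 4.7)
The plan is to prove matching upper and lower bounds of the form $n/f(n) + o(n/\log^2 n)$, where $f(n) := 2\log_b n - 2\log_b\log_b n - 2\log_b 2$. Both directions hinge on a sharp understanding of $\alpha(G_{m,p})$ for $m$ ranging over large induced subgraphs of $G_{n,p}$, and in particular on the fact that $\alpha(G_{m,p})$ is typically equal (to leading orders) to the positive root of $\binom{m}{k}q^{\binom{k}{2}}=1$.

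\emph{Upper bound.} I would run the classical independent-set-stripping colouring algorithm. Starting from $V_0 = [n]$, iteratively extract a near-maximum independent set $I_{i+1}$ of $G_{n,p}[V_i]$, assign it a new colour, and set $V_{i+1} = V_i \setminus I_{i+1}$; terminate once $|V_i| \le n/\log^{10} n$ and colour the remainder trivially with $o(n/\log^2 n)$ further colours. The essential ingredient is a \emph{uniform} lower bound: with high probability, every induced subgraph of $G_{n,p}$ on $m \ge n/\log^{10} n$ vertices contains an independent set of size $\alpha(m) - o(1)$, where $\alpha(m)$ denotes the typical value of $\alpha(G_{m,p})$. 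Given this, the number of colours used is bounded above by $\int_{n/\log^{10} n}^n \dd m/\alpha(m) + o(n/\log^2 n)$; integration by parts, combined with the fact that $m\alpha'(m) \to 2\log_b e$, evaluates this integral to $n/f(n) + o(n/\log^2 n)$, the subleading terms of $\alpha$ cancelling against the derivative correction in the precise way needed.

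\emph{Lower bound.} I would apply Markov's inequality to $Z_k$, the number of ordered partitions of $[n]$ into $k$ independent sets (with empty parts permitted); note $\chi \le k$ iff $Z_k \ge 1$, so it suffices to show $\E Z_k = o(1)$ for $k$ just below $n/f(n)$. Writing
\[
\E Z_k = \sum_{n_1+\dots+n_k=n} \binom{n}{n_1,\dots,n_k}\, q^{\sum_{i=1}^{k} \binom{n_i}{2}},
\]
log-convexity of $\binom{\cdot}{2}$ together with Stirling shows that the sum is dominated by nearly balanced partitions $n_i \approx n/k$. Expanding $\log \E Z_k$ carefully in $\log k$ and $\log\log k$ and demanding that it tends to $-\infty$ identifies the critical $k$ to the required precision; pushing through to the $o(n/\log^2 n)$ accuracy requires a quadratic Taylor expansion around the balanced optimum and careful bookkeeping of Stirling corrections and the $k!$ normalisation.

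\emph{Main obstacle.} The hardest step is producing the uniform lower bound on $\alpha$ with only $o(1)$ slack in the constant term. A naive second-moment-plus-union-bound argument leaves $O(1)$ slack, which translates into $\Theta(n/\log^2 n)$ slack in $\chi$ and is therefore insufficient. Overcoming this requires a concentration inequality for $\alpha$ of Talagrand or vertex-exposure-martingale type, combined with careful management of the error budget across the $\Theta(n/\log n)$ stripping steps. The restriction $p \le 1 - 1/e^2$ is precisely the regime where the second-moment quotient $\E[X_k^2]/(\E X_k)^2$ for the number $X_k$ of independent sets of the relevant size remains bounded, guaranteeing effective two-point concentration of $\alpha$; above this threshold the approach breaks down and different methods are needed. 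Once this uniform $\alpha$-ingredient is in place, the integral and the first-moment calculations above are essentially mechanical.
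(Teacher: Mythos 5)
This statement is quoted in the paper from \cite{heckel2018chromatic} without proof, so I am comparing your proposal with the argument there. Your lower bound (first moment over partitions, optimised over profiles) is indeed the right, and easier, half. The fatal gap is in your upper bound. The uniform claim that whp \emph{every} induced subgraph on $m\ge n/\log^{10}n$ vertices contains an independent set of size $\alpha(m)-o(1)$ is false, not merely hard: by Harris's inequality, $\Pr\bb{\alpha(G_{m,p})<k}\ge \exp(-O(\mu_k(m)))$, and for $k=\lfloor\alpha_0(m)\rfloor$ one has $\mu_k(m)=O(m/\log m)$, so the lower-tail probability is at least $e^{-O(m/\log m)}$, hopelessly large against the $\binom{n}{m}=e^{\Theta(n)}$ subgraphs with $m=\Theta(n)$ that a stripping argument must survive. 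Indeed, deleting one vertex from each independent $\alpha$-set of $G_{n,p}$ produces (for most $n$) an induced subgraph on $m=n(1-o(1))$ vertices with independence number at most $\alpha(m)-1$. Talagrand or martingale inequalities control the width of the distribution of $\alpha(G_{m,p})$, not the location of its lower tail, so they cannot reduce the slack below the $O(1)$ you already identified; this $O(1)$ loss per class is exactly why stripping arguments (\cite{panagiotou2009note}, \cite{fountoulakis2010t}) remained an additive $\Theta(n/\log^2 n)$ above the first-moment lower bound until \cite{heckel2018chromatic}.

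Moreover, your two halves contradict each other, which by itself shows the stripping route cannot give the stated precision. Stripping with classes of size $\alpha_0(m)-o(1)$ yields, after the integration you describe, about $\frac{n}{\alpha_0(n)-2\log_b e}+o(n/\log^2 n)$ colours (the $-2\log_b e$ emerges automatically from the shrinkage of $\alpha_0(m)$ along the process), and $\alpha_0-2\log_b e=2\log_b n-2\log_b\log_b n-2\log_b 2+1$, i.e.\ one \emph{more} than the denominator in \eqref{eq:estimate}; so your claimed upper bound would lie $\Theta(n/\log^2 n)$ \emph{below} your own first-moment lower bound. The truth corresponds to average class size $\alpha_0-1-2\log_b e$, and proving that this is achievable is the genuinely hard direction: in \cite{heckel2018chromatic} the upper bound is obtained not by stripping but by a second-moment argument on the number of partitions with a near-optimal bounded profile, upgraded to a whp statement by concentration. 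Finally, the restriction $p\le 1-1/e^2$ has nothing to do with two-point concentration of $\alpha$ (which holds for all constant $p$); it is the condition $2\log_b e\ge 1$, which keeps the optimal average class size $\alpha_0-1-2\log_b e$ compatible with the integer constraints on class sizes --- for $p>1-1/e^2$ the value of $\chi(\Gnp)$ itself takes a different form (compare Lemma~\ref{lem:estimatelargep}).
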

For constant $p >1-1/e^2$, there is a slightly more complicated expression which also determines $\chi(\Gnp)$ whp up to accuracy $o \left( \frac{n}{\log^2 n} \right)$~\cite{heckel2018chromatic}. 

{\L}uczak~\cite{luczak1991chromatic} extended Theorem~\ref{theorem:bollobas87} to the case $p \rightarrow 0$, giving a similar expression for $\chi(G_{n,p})$ whenever $p > C/n$ for some large enough constant $C$.

All the results we have mentioned so far examine the likely \emph{value} of the chromatic number --- they give increasingly sharp upper or lower bounds for $\chi(G_{n,p})$ which hold with high probability. A separate line of enquiry asks for the \emph{concentration} of the chromatic number: even if we cannot pin down $\chi(\Gnp)$ exactly, can we say something about how much it varies?

The starting point for these questions is the classic result of Shamir and Spencer from their 1987 paper~\cite{shamir1987sharp}, in which they pioneered the use of martingale concentration inequalities in probabilistic combinatorics, something which has now become a standard tool in the area. They proved that for any function $p=p(n)$, the chromatic number of $G_{n,p}$ takes one of at most about  $\sqrt{n}$ consecutive values whp.
\begin{theorem}[\cite{shamir1987sharp}]\label{theorem:shamirspencer}
  Let $p=p(n)\in (0,1)$ and $\omega(n) \rightarrow \infty$ be arbitrary functions. Then there is a sequence of intervals $[s_n, t_n]$ of length 
  \[\ell_n := t_n - s_n \le \sqrt{n}\omega(n)\]
  such that, whp,
  \[
  \pushQED{\qed}
   \chi(G_{n,p}) \in [s_n, t_n]. \qedhere
 \popQED
 \]
\end{theorem}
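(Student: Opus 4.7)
The natural approach is via the vertex-exposure Doob martingale combined with the Azuma--Hoeffding inequality. Fix an ordering $v_1,\dots,v_n$ of the vertices, and let $\mathcal{F}_i$ be the $\sigma$-algebra generated by the edges of $\Gnp$ whose endpoints both lie in $\{v_1,\dots,v_i\}$. Set $X_i = \E[\chi(\Gnp)\mid \mathcal{F}_i]$, so $(X_i)_{0\le i\le n}$ is a martingale with $X_0 = \E \chi(\Gnp)$ and $X_n = \chi(\Gnp)$.

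The key combinatorial input is the \emph{vertex Lipschitz} property of $\chi$: if two graphs $G,G'$ on $\{v_1,\dots,v_n\}$ differ only in edges incident to a single vertex $v$, then $|\chi(G)-\chi(G')|\leq 1$, since any proper colouring of one becomes a proper colouring of the other after, at worst, giving $v$ a fresh colour. Going from $\mathcal{F}_i$ to $\mathcal{F}_{i+1}$ only exposes the edges from $v_{i+1}$ to $\{v_1,\dots,v_i\}$; as these are independent of $\mathcal{F}_i$, the pointwise Lipschitz bound averages to give $|X_{i+1}-X_i|\leq 1$ almost surely for every $i$.

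Azuma--Hoeffding, applied with unit increments over $n$ steps, then yields
\[
 \Pb\bb{|\chi(\Gnp) - \E\chi(\Gnp)| \geq \lambda\sqrt{n}} \leq 2\exp(-\lambda^2/2).
\]
Choosing $\lambda = \omega(n)/3$ (so that the right-hand side is $o(1)$) and taking $s_n,t_n$ to be any integers with $[\E\chi(\Gnp) - \lambda\sqrt{n},\,\E\chi(\Gnp) + \lambda\sqrt{n}] \subseteq [s_n, t_n]$ and $t_n - s_n \leq \sqrt{n}\omega(n)$ (possible for all sufficiently large $n$, absorbing the $O(1)$ integer-rounding slack into the difference $\omega(n)/3$ versus $\omega(n)$) completes the proof.

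There is no serious obstacle: the whole argument is a textbook application of martingale concentration once one spots the Lipschitz property. The true interest, and the motivation for the remainder of the paper, is that the $\sqrt{n}$ rate here does not appear to be tight for $p=1/2$, yet it cannot be improved by purely martingale methods, because a unit-increment martingale of length $n$ genuinely does fluctuate on the scale $\sqrt{n}$ in general.
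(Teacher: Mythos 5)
Your proof is correct: this is the classical vertex-exposure martingale plus Azuma--Hoeffding argument, which is exactly how the cited result of Shamir and Spencer is proved. The paper itself states this theorem as an imported result from \cite{shamir1987sharp} without reproducing a proof, so there is nothing further to compare; your write-up (Lipschitz property, unit increments, $\lambda=\omega(n)/3$ with integer rounding) is the standard and complete argument.
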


 It is not hard to show that for certain extreme values of $p=p(n)$, Theorem~\ref{theorem:shamirspencer} is tight: Alon and Krivelevich~\cite{alon1997concentration} note that $\chi(G_{n,p})$ is not concentrated on fewer than $\Theta(\sqrt{n})$ values for $p=1-1/(10n)$.

For the dense case, where $p$ is constant, Alon gave a slight improvement to intervals of length about $\frac{\sqrt{n}}{\log n}$ (\cite{alonspencer}, \S 7.9, Exercise 3; see also~\cite{scott2008concentration}). If $p$ tends to $0$ quickly enough, however, Theorem~\ref{theorem:shamirspencer} can be improved considerably.

Shamir and Spencer~\cite{shamir1987sharp} showed that if $p=n^{-c}$ for $c \in (0, \frac 12)$, then $\chi(G_{n,p})$ is concentrated on at most about $n^{\frac 12 - c} \log n$ values. For $c > \frac 12$, they proved concentration on \emph{constantly} many values. {\L}uczak~\cite{luczak1991note} showed that if $c > \frac 5 6$, then $\chi(G_{n,p})$ is maximally concentrated: whp it takes one of at most two consecutive values. Finally, Alon and Krivelevich~\cite{alon1997concentration} proved two-point concentration whenever $p<n^{-c}$ with $c> \frac 12$ constant.

It should be noted that none of these concentration results gives any information about the \emph{location} of the concentration intervals. In a breakthrough contribution, Achlioptas and Naor~\cite{achlioptas2005two} found two \emph{explicit} values for $\chi(G_{n,p})$ with $p=d/n$ where $d$ is constant. Later, Coja-Oghlan, Panagiotou and Steger~\cite{coja2008chromatic} extended this result to $p<n^{-\frac34 - \eps}$, giving three explicit values in this case.

In view of strong results asserting sharp concentration of the chromatic number, starting in the late 1980s Bollob\'as raised, and he and Erd\H{o}s disseminated, the \emph{opposite} question: can we find any examples where the chromatic number of $G_{n,p}$ is \emph{not} very sharply concentrated? Of course there are cases where this is trivially true, such as when $p=1-1/(10n)$ as mentioned above. But what about interesting examples, and what about the most natural special case, $p=1/2$?

It took quite a while for this question to appear in print. In an open problems appendix to the first edition of \emph{The Probabilistic Method}~\cite{alonspencerfirstedition}, Erd\H{o}s asked: can we prove that $\chi(\Gnh)$ is not concentrated on constantly many values? Bollob\'as reiterated this question in~\cite{bollobas:concentrationfixed}, asking for \emph{any} non-trivial results asserting a lack of concentration. 
The problem is also discussed in \cite{ alon1997concentration, bollobas:randomgraphs, chung1998erdos, glebov2015concentration, mcdiarmidsurvey}.

The first result of this type was recently given by the first author in~\cite{heckel2019nonconcentration}: it turns out that, at least for some values of $n$, the chromatic number of $\Gnh$ is not concentrated on fewer than about $n^{\frac 14}$ values.
\begin{theorem}[\cite{heckel2019nonconcentration}]\label{theorem:n14}
Let $c< \frac 14$ be a constant, and let $([s_n,t_n])_{n\ge1}$ be a (deterministic) sequence of intervals such that $\Pr\bb{\chi(\Gnh)\in[s_n,t_n]}\to 1$ as $n\to\infty$. Then there are infinitely many $n$ such that $t_n-s_n>n^c$.  \qed
\end{theorem}
In other words, slightly informally, for $c<1/4$ there is no sequence of intervals of length $n^{c}$ which contain $\chi(\Gnh)$ with high probability.

\subsection{Main results}

In this paper, we improve the lower bound in Theorem~\ref{theorem:n14} to an almost optimal one, giving a lower bound on the concentration interval length which nearly matches the upper bound from Theorem~\ref{theorem:shamirspencer}.
\begin{theorem}\label{theorem:n12}
Fix $p \in (0,1)$ and $c < \frac 12 $, and 
let $([s_n,t_n])_{n\ge1}$ be a (deterministic) sequence of intervals such that $\Pr\bb{\chi(\Gnp)\in[s_n,t_n]}\to 1$ as $n\to\infty$. Then there are infinitely many $n$ such that $t_n-s_n>n^c$.
\end{theorem}
It is clear from the form of the result that Theorem~\ref{theorem:n12} also holds if we replace $c$ with $\frac 12 - o(1)$ for some function $o(1)$ which tends to $0$ sufficiently slowly. Up to this vanishing term, the exponent matches the classic upper bound of $\sqrt{n}$ for the concentration interval length given by Shamir and Spencer, and Alon's improved upper bound of $\frac{\sqrt{n}}{\log n}$.

Considering intervals centred on the expectation of $\chi(\Gnp)$, Theorem~\ref{theorem:n12} implies (but is not implied by) a corresponding bound on the variance of $\chi(\Gnp)$. Concretely, for any $c<1$, we do not have $\Var(\chi(\Gnp))=O(n^c)$.

Note that neither Theorem~\ref{theorem:n14} nor Theorem~\ref{theorem:n12} tells us anything about the concentration of the chromatic number of $G_{n,p}$ for any particular $n$, let alone every $n$. They only state that whenever $[s_n, t_n]$ is a sequence of intervals which contain $\chi(G_{n,  p})$ whp, there must be a subsequence of long intervals.
Thus, these results do not rule out the unlikely scenario that the chromatic number of $\Gnp$ is spread out over about $\sqrt{n}$ values on some sparse subsequence of the integers, and is one-point concentrated everywhere else.

We will prove a stronger result than Theorem~\ref{theorem:n12}, Theorem~\ref{theorem:nstar} below. To state this we introduce some notation, and review some classic results, concerning the independence number of $G_{n,p}$.


A set of vertices is \emph{independent} in a graph $G$ if there are no edges of $G$ between them; the \emph{independence number} of $G$, denoted by $\alpha(G)$, is the maximum size of such a set in $G$. As before, let $q=1-p$ and $b=1/q$. For $p$ constant, $\alpha(G_{n,p})$ can be determined precisely as follows: let
\begin{equation} \label{eq:a0def}
 \alpha_0 = \alpha_0(n) := 2 \log_b n - 2 \log_b \log_b n +2 \log_b \left( e/2 \right)+1;
\end{equation}
then Matula~\cite{matula1970complete,matula1972employee} and independently Bollob\'as and Erd\H os~\cite{erdoscliques} proved that 
\[\alpha(G_{n, p}) = \left\lfloor \alpha_0+o(1)\right \rfloor \text{ whp},\]
pinning down $\alpha(G_{n,  p})$ to at most two consecutive values. If we let 
\begin{equation}\label{alphadef}
 \alpha=\alpha(n):=\left \lfloor \alpha_0(n) \right \rfloor,
\end{equation}
then in fact for most $n$, whp $\alpha(G_{n, p}) = \alpha$.

Given $t \ge 1$, we call an independent set of size $t$ a \emph{$t$-set}. Let $X_t$ count the number of $t$-sets in $G_{n,p}$, and let
\begin{equation}\label{eq:mudef}
 \mu_t = \mu_t(n) := \E[X_t]=\binom{n}{t}q^{\binom{t}{2}}.
\end{equation}
If we interpret the formula above suitably for non-integer $t$, then $\alpha_0(n)$ is, to a good approximation, the value of $t$ at which $\mu_t=1$. In particular, unless $\alpha_0$ is very close to an integer, we expect many $\alpha$-sets and no $(\alpha+1)$-sets, so it is no surprise that $\alpha(\Gnp)=\alpha$ whp.

With this notation, we can now state our next, more precise, result.

\begin{theorem} \label{theorem:nstar}
Fix $p \le 1-1/e^2$ and $\eps>0$, and let $[s_n, t_n]$ be a sequence of intervals such that $\Pr\bb{\chi(\Gnp)\in [s_n,t_n]}\ge 0.9$.
Then, for each $n$ such that $\mu_{\alpha(n)}(n)< n^{1-\eps}$, there is an integer $n^* = (1+o(1)) n$ such that
\[
 t_{n^*} - s_{n^*} > C \frac{\sqrt{\mu_{\alpha(n^*)}(n^*)}}{ \log n^*}, 
\]
where
\[
 C=C(p,\eps)=\frac{\eps \log b}{9}
\]
and, as usual, $b=1/(1-p)$.
\end{theorem}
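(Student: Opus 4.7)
My strategy is to exhibit two events at some $n^*=(1+o(1))n$, each of positive probability, that force $\chi(G_{n^*, p})$ to take values differing by at least $C\sqrt{\mu_{\alpha(n^*)}(n^*)}/\log n^*$. Both events are keyed to the random variable $X_\alpha$ counting independent $\alpha$-sets, whose natural fluctuation of order $\sqrt{\mu_\alpha}$ should translate, via a sensitivity analysis, into a fluctuation of order $\sqrt{\mu_\alpha}/\log n$ in $\chi$: each extra usable $\alpha$-set saves roughly $1/\log n$ colours relative to a slightly smaller independent set used as a colour class.

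First I would establish a two-value fluctuation statement for $X_\alpha$. A second-moment computation in the regime $\mu_\alpha(n) < n^{1-\eps}$ gives $\Var(X_\alpha) = (1+o(1))\mu_\alpha$: the cross-terms from pairs of $\alpha$-sets sharing $k \ge 2$ vertices are negligible when $\alpha = O(\log n)$ and $\mu_\alpha$ is this small. Combined with a Paley--Zygmund or Stein--Chen argument, this yields integers $k_1 < k_2$ with $k_2 - k_1 = \Omega(\sqrt{\mu_\alpha})$ such that both $\Pr(X_\alpha \le k_1)$ and $\Pr(X_\alpha \ge k_2)$ are bounded below by a constant depending only on $\eps$. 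The same second-moment input shows that in the high-$X_\alpha$ case a greedy packing extracts $(1-o(1))X_\alpha$ pairwise disjoint $\alpha$-sets whp, since most $\alpha$-set pairs are genuinely disjoint in this regime.

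The heart of the argument --- and the main obstacle --- is the sensitivity: a change of $\Delta$ in the size of a maximum disjoint $\alpha$-set packing should force a change of $\Theta(\Delta/\log n)$ in $\chi$. The upper direction is comparatively clean: using $M$ disjoint $\alpha$-sets as colour classes and applying the sharp estimate~(\ref{eq:estimate}) to the residual graph on $n - M\alpha$ vertices, one obtains an upper bound on $\chi(\Gnp)$ that, upon expansion in $M$ around $M = 0$, decreases with slope of order $1/\log n$ per $\alpha$-set used. The matching lower bound at the same precision is the delicate part, since the trivial bound $\chi \ge n/\alpha$ is too weak by a factor of $\log n$. One must argue (this is where the hypothesis $p \le 1 - 1/e^2$ and the sharp form of Theorem~\ref{theorem:bounds} come in) that a near-optimal colouring cannot use many more than $X_\alpha$ classes of size $\alpha$, so that ``missing'' large classes cost extra colours in a quantifiable way, yielding a compatible lower bound on $\chi$ whp.

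The final step produces $n^*$ and assembles the pieces. Since $\mu_\alpha$ varies essentially continuously as $n$ moves within a plateau of constant $\alpha(n)$, shifting $n$ by $o(n)$ lets us choose $n^*$ so that the predicted spread in $\chi$ actually straddles integer values --- an important step because $\chi$ is integer-valued and the raw sensitivity statement concerns continuous quantities. Then, if $\chi(G_{n^*, p}) \in [s_{n^*}, t_{n^*}]$ with probability at least $0.9$, both events $\{X_\alpha \le k_1\}$ and $\{X_\alpha \ge k_2\}$ must be compatible with this interval, forcing $t_{n^*} - s_{n^*} \ge (k_2 - k_1) \cdot \Theta(1/\log n^*)$. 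Tracking the Paley--Zygmund constant and the slope from the sensitivity analysis carefully then produces the explicit constant $C(p, \eps) = \eps \log b / 9$ in the statement.
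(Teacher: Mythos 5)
There is a genuine gap, and it sits exactly where you flag it: the ``sensitivity'' step. Your plan needs, at a \emph{single} value $n^*$, a statement of the form ``if $X_\alpha$ is smaller by $\Omega(\sqrt{\mu_\alpha})$ then $\chi$ is larger by $\Omega(\sqrt{\mu_\alpha}/\log n)$'', with both the upper and lower branches accurate to within $o(\sqrt{\mu_\alpha}/\log n)$. Neither branch can be extracted from the tools you cite. For the upper branch, the estimate \eqref{eq:estimate} of Theorem~\ref{theorem:bounds} carries a whp error of size $o(n/\log^2 n)$, which completely swamps the target scale $\sqrt{\mu_\alpha}/\log n\le \sqrt{n}/\log n$; moreover, after conditioning on $M$ planted/packed independent $\alpha$-sets the residual graph is not distributed as $G_{n-M\alpha,p}$, so \eqref{eq:estimate} does not even apply directly. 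For the lower branch you only gesture (``one must argue that a near-optimal colouring cannot use many more than $X_\alpha$ classes of size $\alpha$\dots''), but converting a deficit of $\alpha$-sets into a quantified excess of colours at precision $\Theta(1/\log n)$ per set, at a fixed $n$, is precisely the heuristic of \S\ref{ss:zigzag} that the paper states as the (unproven) Zigzag Conjecture, not as a theorem. Indeed, if your fixed-$n$ sensitivity statement were provable, it would yield non-concentration at (essentially) every $n$ in the regime $\mu_\alpha<n^{1-\eps}$, which is strictly stronger than Theorem~\ref{theorem:nstar}; the paper explicitly notes that failing to localise at a particular $n$ is ``a feature of the method'', i.e.\ of what is currently provable.

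The paper's actual route avoids any fixed-$n$ sensitivity analysis. It plants a single random independent $\alpha$-set and shows, via an upper bound on $\Var(X_\alpha)$ (so the second-moment computation you propose is used, but in the opposite direction), that the planted model is within total variation distance about $1/(2\sqrt{\mu_\alpha})$ of $G_{n,p}$ (Lemma~\ref{lemma:coupling}); chaining $r\approx\sqrt{\mu_\alpha}$ such steps couples $G_{n,p}$ with $G_{n+\alpha r,p}$ so that $\chi(G_{n+\alpha r,p})\le\chi(G_{n,p})+r$ with probability $>0.4$ (Corollary~\ref{corollary:coupling}). This says the chromatic number, viewed across a window of values of $n$, cannot grow faster than slope $1/\alpha$ plus a term controlled by the interval lengths; comparing with the explicit estimate $f(n)$ from Theorem~\ref{theorem:bounds}, whose derivative exceeds $1/\alpha$ by $\delta\approx\eps/(2\alpha^2)$ in the regime $\mu_\alpha<n^{1-\eps}$, the framework lemma (Lemma~\ref{lem:framework}) forces some $n^*$ in the window to have a long interval, of length about $\alpha\delta r/2\approx\eps\sqrt{\mu_\alpha}/(4\alpha)$, which gives the constant $C=\eps\log b/9$. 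If you want to salvage your write-up, you would need to either reproduce this coupling-across-$n$ argument or supply a genuinely new proof of the fixed-$n$ sensitivity claim; the latter is currently out of reach.
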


Theorem~\ref{theorem:nstar} readily implies the case $p\le 1-1/e^2$ of Theorem~\ref{theorem:n12}: we simply pick a sequence of $n$ where $\mu_\alpha$ is close to $n$, which is certainly possible; see, for example,  Lemma 4 in~\cite{heckel2019nonconcentration}. The case $p>1-1/e^2$ of Theorem~\ref{theorem:n12} will also follow easily from the proof of Theorem~\ref{theorem:nstar} (see the final part of \S\ref{ss:n12}). We have replaced the assumption that $\chi(\Gnp)$ is in a certain interval whp with a weaker concrete assumption, since this is what the proof allows. The specific constant $0.9$ is not optimized.

Theorem~\ref{theorem:nstar} still does not imply non-concentration for any particular $n$ --- this is a feature of the method --- but for every $n$ it will find some nearby $n^*$ where the concentration interval is long. In many cases we believe that the bound above is tight up to the constant factor, including the dependence on $\eps$; see Section~\ref{ss_conj}, and in particular Remark~\ref{rem:matches}.

\subsubsection*{Even stronger bounds}

Theorem~\ref{theorem:n12} implies that there are \emph{some} values of $n$ such that $\chi(G_{n, p})$ is not concentrated on fewer than $n^{\frac 12 - o(1)}$ values for some unspecified function $o(1)$. Can this be pushed any further towards Alon's upper bound of $\sqrt{n}/ \log n$? We focus on the case $p=\frac 12$.

The main bottleneck is the form of the error term in the estimate \eqref{eq:estimate}
in Theorem~\ref{theorem:bounds}, which we make essential use of in the proof of Theorem~\ref{theorem:n12}. Specifically, we use that we have an explicit estimate for $\chi(\Gnh)$, and that the derivative (w.r.t.\ $n$) of this estimate is sufficiently larger than $1 / \alpha(n)$; see Remark~\ref{rem:bottleneck} for how this affects the final bound. 

Konstantinos Panagiotou and the first author~\cite{HPbdd} recently announced a sharper explicit estimate for $\chi(\Gnh)$. To state this we need some definitions.
\begin{definition}\label{def:tbdd}
A vertex colouring of $G$ is \emph{$t$-bounded} if all colour classes have size at most $t$; the \emph{$t$-bounded chromatic number} of $G$, denoted $\chi_{t}(G)$, is the minimum number of colours in such a colouring. By an \emph{unordered ($t$-bounded) $k$-colouring} of a graph $G$, we mean a partition of $V(G)$ into $k$ non-empty independent sets (of size at most $t$). We may think of this as an equivalence class of $k$-colourings under permuting colours.
Let $E_{n,k,t}$ denote the expected number of unordered $t$-bounded $k$-colourings of $\Gnh$. Then the \emph{$t$-bounded first moment threshold} of $\Gnh$ is defined to be
\begin{equation}\label{ktdef}
 k_t(n) := \min\{k: E_{n,k,t} \ge 1 \}.
\end{equation}
Note that $E_{n,n,t}=1$, so this definition makes sense.
\end{definition}

In~\cite{HPbdd} it is shown
that if $a=a(n)$ is such that $n^{0.1}<\mu_a(n)<n^{1.9}$ (where $\mu_a(n)$ is defined in \eqref{eq:mudef}), then whp
\begin{equation} \label{eq:announcedbounds}
 \chi_{a-1} (\Gnh) = k_{ a-1}(n) + O( n^{0.99}).
\end{equation}
Unsurprisingly, when $\mu_\alpha$ is not too large, then $\chi_{\alpha-1}(\Gnh)$ and $\chi(\Gnh)$ are close, and then \eqref{eq:announcedbounds} (applied with $a=\alpha$) provides a good bound on the latter. For example, we trivially have that the expectation of the difference is at most $\mu_\alpha$, though we will need a much tighter bound (see Lemma~\ref{lem:as1}).
Assuming a much weaker form of a special case of \eqref{eq:announcedbounds}, we can prove a stronger lower bound on the non-concentration interval.
\begin{theorem}\label{theorem:polylog}
Suppose that, for any integers $n$ and $a=a(n)$ such that $\mu_a(n)=\Theta(n/\log^2 n)$, we have
\begin{equation}\label{eq:assump}
 \chi_{a-1} (\Gnh) = k_{ a-1}(n) + o(n\log\log n/\log^4 n) \text{ whp},
\end{equation}
where $k_t(n)$ is defined in \eqref{ktdef}.
Then there is a constant $c>0$ so that for any sequence of intervals $[s_n, t_n]$ such that $\Pr\bb{\chi(\Gnh) \in [s_n, t_n]}\ge 0.9$, there is a sequence of integers $n^*$ such that
\[
 t_{n^*}-s_{n^*} \ge c\frac{\sqrt{n^*} \log\log n^*}{\log^3 n^*}.
\]
\end{theorem} 
\begin{remark}\label{rem:var}
Theorem~\ref{theorem:polylog} immediately 
implies (assuming \eqref{eq:assump}) a corresponding lower bound on the variance of $Y_n=\chi(\Gnh)$: writing $w_n$ for $n^{1/2}\log\log n/\log^3 n$, if we take intervals $I_n$ of length $cw_n/2$ centred on the mean of $Y_n$, then there are infinitely many $n$ such that $\Pr(Y_n\notin I_n)> 0.1$, which implies $\Var(Y_n)> 0.1(cw_n/4)^2 = \Omega(w_n^2)$, so $\limsup \Var(Y_n)/w_n^2>0$.
\end{remark}
As we shall describe in the next section, we believe that the bound given by Theorem~\ref{theorem:polylog} is optimal up to the constant factor. 

\subsection{Conjectured behaviour}\label{ss_conj}

The behaviour of the chromatic number of $\Gnp$ is closely linked to that of the number of large independent sets, and specifically to $X_\alpha$ and $X_{\alpha-1}$ (where $X_t$ is the number of independent $t$-sets), so we take a closer look at the distributions of these random variables. 

First consider $X_\alpha$. 
Let $\muexp=\muexp(n) = \log \mu_\alpha / \log n$, so that 
    \begin{equation}\mu_\alpha = n^{\muexp}. \label{eq:mua}
    \end{equation}
 Standard calculations (see \S3.c in~\cite{mcdiarmid1989method}) give
 \begin{equation}\label{rhoalpha1}
  \muexp=\alpha_0 - \alpha + o(1) \in [-o(1), 1+o(1)].
 \end{equation}
 Thus $\muexp$ behaves as shown in Figure~\ref{fig:alphaexponent}: when $\alpha_0$ is close to an integer, $\muexp$ is close to $0$. As we increase $n$, $\muexp$ increases to near $1$ (roughly linearly in $\log n$), until $\alpha_0$ gets close to the next integer. At this point $\alpha(n)$ increases by $1$ and $\muexp$ drops back to near $0$.
\begin{figure}[tb]
\begin{center}
\begin{overpic}[width=0.9\textwidth]{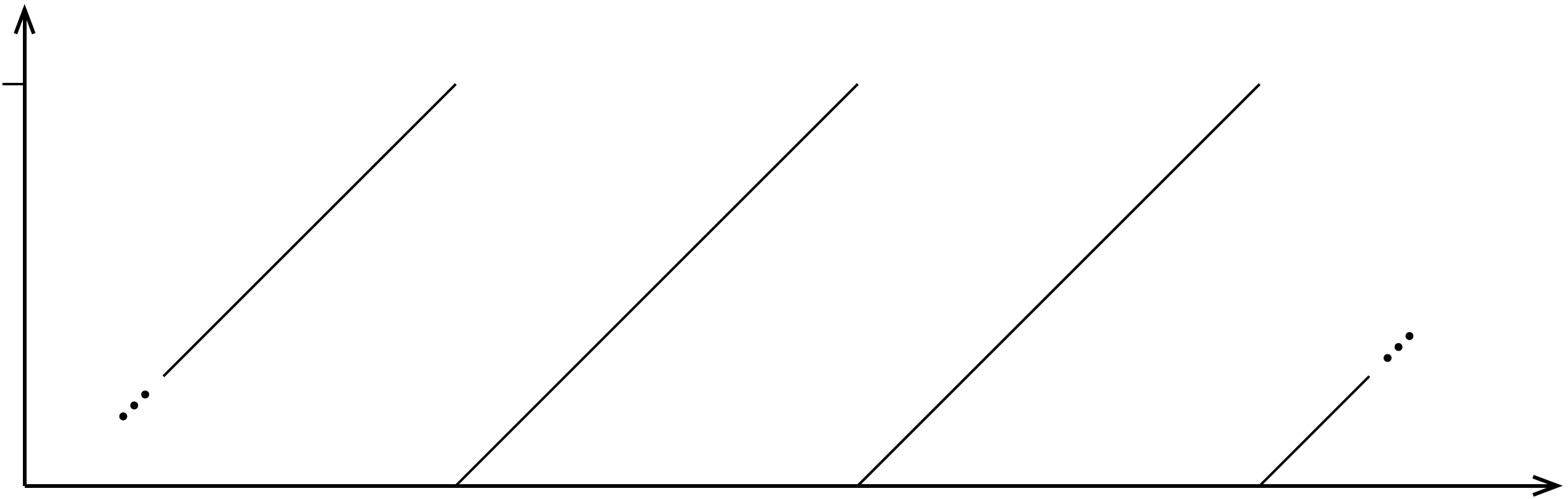}
\put(96,-2.5){$\log n$}
\put(-5,32){$\muexp(n)$}
\put(-2,25.5){$1$}
\put(-2,0){$0$}
\end{overpic}
\end{center}
\caption{\label{fig:alphaexponent} The exponent $\muexp=\muexp(n)$ so that $\mu_\alpha=n^\muexp$. When $\alpha_0(n)$ is close to an integer, $\alpha(n)=\left \lfloor \alpha_0 \right \rfloor$ increases by $1$ and $\muexp$ drops from close to $1$ to close to $0$. Note that one can think of each line segment as graphing the expected number of $t$-sets of some particular size $t$ (or rather, the log of this divided by $\log n$). These lines extend above and below the strip shown in the figure, but when we are considering the largest independent set, we jump from one size to the next as $n$ increases.}
\end{figure}

As for $X_{\alpha-1}$, note that
\begin{equation}\label{eq:mua-1}
 \mu_{\alpha-1} =\Theta\left(\frac{n}{ \log n} \mu_\alpha\right) = n^{1+\muexp+o(1)}.
\end{equation}

It turns out that both $X_\alpha$ and $X_{\alpha-1}$ are approximately Poisson for almost all $n$ (see Theorem 11.9 in~\cite{bollobas:randomgraphs}). 
In particular, $X_\alpha$ and $X_{\alpha-1}$ are not whp contained in any sequences of intervals shorter than $\sqrt{\mu_\alpha} = n^{\muexp/2}$ and $\sqrt{\mu_{\alpha-1}} = n^{(1+\muexp)/2+o(1)}$, respectively.

\subsubsection{The Zigzag Conjecture}\label{ss:zigzag}

We are now ready to state a conjecture on the correct length of the concentration interval made by Bollob\'as, Heckel, Panagiotou, Morris, Riordan and Smith. The conjecture states that the concentration interval length for $\chi(\Gnh)$ is essentially the maximum of two proposed lower bounds, one which comes from fluctuations in $X_\alpha$ and one which comes from fluctuations in $X_{\alpha-1}$, which we will describe below.

We shall consider only the case $p=\frac12$, for a number of reasons. Firstly, this is the original question; secondly, this simplifies the formulae somewhat; finally, and most importantly, for some constant $p$ --- in particular when $p>1-1/e^2$ --- the chromatic number of $\Gnp$ behaves differently to the case $p=\frac12$ (see~\cite{heckel2018chromatic}), so its concentration may well behave differently too.

The chromatic number of $\Gnh$ is closely linked to its independence number. Every colour class in a colouring is an independent set, and so for any graph $G$ on $n$ vertices, $\chi(G) \ge n / \alpha(G)$. In $\Gnh$, this simple bound for the value of the chromatic number is asymptotically correct: Bollob\'as' classic result implies that whp $\chi(\Gnh) \sim n / \alpha(\Gnh)$, and
Theorem~\ref{theorem:bounds} states that whp,
\[
 \chi(\Gnh) = \frac{n}{\alpha_0 - 1- \frac{2}{\log 2}+o(1)} \approx \frac{n}{\alpha_0-3.89}.
\]
 
It is plausible that an optimal colouring of $\Gnh$ contains all or almost all $\alpha$-sets as colour classes. To see this heuristically, fix a number $k \approx \frac{n}{2 \log_2 n}$ of colours. Each essentially different colouring of the vertex set of $\Gnh$ with $k$ colours corresponds to a \emph{profile}, i.e., a sequence of sizes for the colour classes. Among all profiles, it turns out that the expected number of colourings\footnote{As before, we actually count partitions into independent sets (with a given profile), rather than colourings.} is maximised if all or almost all $\alpha$-sets are included as colour classes. More precisely, the expectation is maximised by \emph{unrealizable} profiles containing even more $\alpha$-sets (order $n/\log n$). Although the expected number of colourings with such a profile is large, whp no such colouring exists, as there are not enough $\alpha$-sets.

We saw above that $X_\alpha$ is approximately Poisson with mean $n^\muexp$. In particular, $X_\alpha$ varies by about $\sqrt{\mu_\alpha} = n^{\muexp/2}$. If the number of available $\alpha$-sets for our colouring varies by $\sqrt{\mu_\alpha}$, intuitively the total number of colours we need should vary by at least about 
\begin{equation}\label{eq:firstlowerbound}
\frac{\sqrt{\mu_\alpha}}{\log n} = \frac{n^{\muexp/2}}{\log n}.
\end{equation}

Perhaps it is not immediately clear where the factor $\log n$ comes from. One heuristic way to see this is the following: if there are $n^{\muexp/2}$ fewer $\alpha$-sets, we can cover $n^{\muexp/2} \alpha$ fewer vertices with $\alpha$-sets and need to colour them in colour classes of size $\alpha-1$ or less. On average we colour with classes of size $\approx \alpha_0-3.89$. So each $\alpha$-set that we use covers $\Theta(1)$ extra vertices compared to a typical colour class, and hence saves $\Theta(1/\alpha_0)=\Theta(1/\log n)$ colours. This argument is an oversimplification; see \S\ref{ss_furtherconjs} for a detailed discussion.

The first part of the Zigzag Conjecture states that \eqref{eq:firstlowerbound} is indeed a lower bound for the concentration interval length of $\chi(G_{n,\frac 12})$ (see Figure~\ref{fig:conjecture}).
\begin{figure}[tb]
\begin{center}
\begin{overpic}[width=0.9\textwidth]{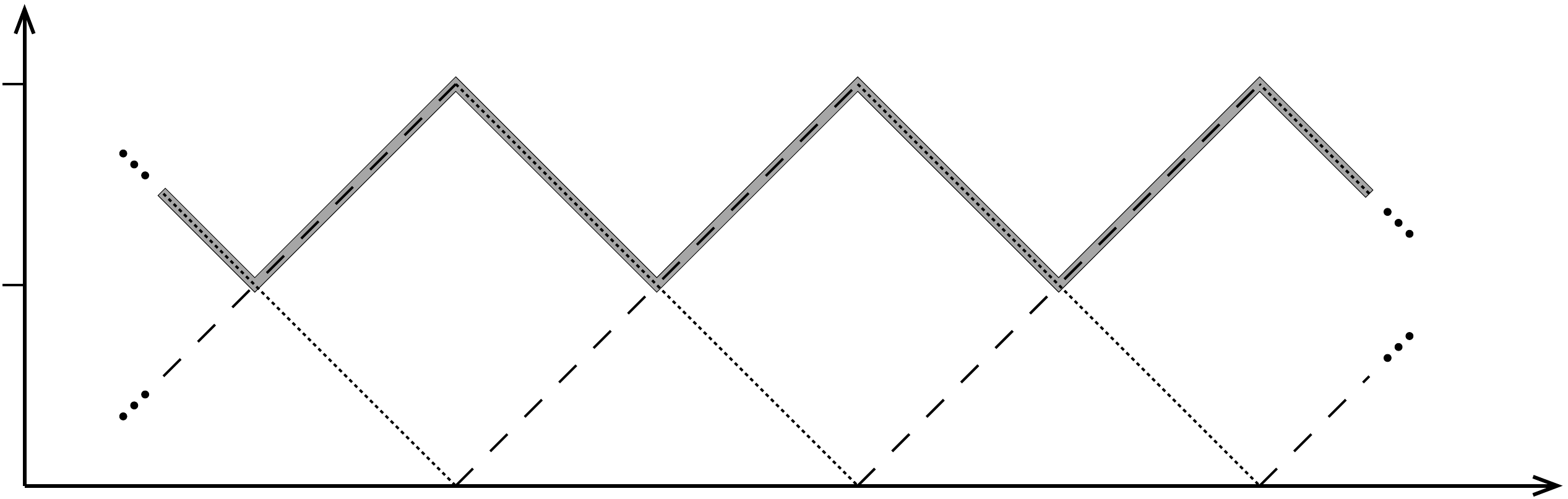}
\put(96,-2.5){$\log n$}
\put(-2,25.5){$\frac 12$}
\put(-2,12.75){$\frac 14$}
\put(-2,0){$0$}
\end{overpic}
\end{center}
\caption{\label{fig:conjecture} Exponent of the concentration interval length (in $n$). The dashed line is the conjectured lower bound $\muexp/2$. The dotted line is the conjectured lower bound $(1-\muexp)/2$. The thicker `zigzag' line is the maximum of these two lower bounds. The Zigzag Conjecture proposes that the concentration interval length of $\chi(\Gnh)$ fluctuates between $n^{1/4+o(1)}$ and $n^{1/2+o(1)}$ along this line.}
\end{figure}

The second part is another conjectured lower bound which comes from the variations of $X_{\alpha-1}$, and is slightly trickier to understand. Again fix a number $k\sim \frac{n}{2\log_2 n}$ of colours, and consider the optimal colouring profile, that is, choose the number of colour classes of each possible size so that the expected number of such colourings is maximised. A reasonable guess is that $\chi(\Gnh)$ is close to the smallest $k$ such that, for the optimal colouring profile with $k$ colours, the expected number of colourings is at least $1$; it can be shown that the expected total number of (equivalence classes under permuting colours of) $k$-colourings is then not much more than $1$.

It turns out that the optimal profile contains $l = \Theta (n / \log n)$ colour classes of size $\alpha-1$, i.e., some constant proportion of colour classes have this size. We now make some extremely rough estimates on how much the expected number of $k$-colourings with this profile changes as $X_{\alpha-1}$ varies, at least in the highest order terms.

Since we pick $l$ colour classes from the $X_{\alpha-1}$ available $(\alpha-1)$-sets, the expected number of $k$-colourings with the optimal profile should be roughly proportional to $\binom{X_{\alpha-1}}{l} \approx X_{\alpha-1}^l$. Of course, in reality, not every choice of $l$ colour classes is possible because not all $(\alpha-1)$-sets are disjoint, but the highest order term should match, or rather, should change in the same way as $X_{\alpha-1}$ varies.

Consider $\Gnh$ conditioned on some typical values for $X_{\alpha-1}$ which are $r\approx\sqrt{\mu_{\alpha-1}}$ apart, first on $X_{\alpha-1}=m \approx \mu_{\alpha-1}$ and then on $X_{\alpha-1}=m-r$. In the second case, where we have $r$ fewer $(\alpha-1)$-sets, the expected number of $k$-colourings with optimum profile decreases by a factor of roughly
\begin{equation}
 (m-r)^l / m^ l \approx \exp \left( -rl / m\right) = \exp\left( -\Theta \left( \frac{n}{\sqrt{\mu_{\alpha-1}}\log n} \right)\right). \label{eq:decrease}
\end{equation}
So how much does the chromatic number increase when $X_{\alpha-1}=m-r$ compared to the case $X_{\alpha-1}=m$? It can be shown (see Corollary~\ref{dnk}) that adding one colour increases the expected number of colourings by a factor of size $\exp \left(\Theta\left(\log^2 n \right) \right)$. So in order to make up for the decrease in the expectation in \eqref{eq:decrease}, we need to introduce order
\[
 \frac{n}{\sqrt{\mu_{\alpha-1}} \log^3 n}
\]
additional colours. By \eqref{eq:mua-1}, note that
\begin{equation} \label{eq:secondlowerbound}
 \frac{n}{\sqrt{\mu_{\alpha-1}} \log^ 3 n} = \Theta \left( \frac{\sqrt{n}}{\sqrt{\mu_\alpha} \log^{5/2} n} \right) = \Theta \left( \frac{n^{(1-\muexp)/2}}{\log^{5/2} n} \right).
\end{equation}
The second part of the Zigzag Conjecture states that \eqref{eq:secondlowerbound} is another lower bound for the concentration interval length of $\chi(\Gnh)$ (see Figure~\ref{fig:conjecture}).

Are counts of $\alpha$-sets and $(\alpha-1)$-sets the only significant sources of non-concentration of the chromatic number? 
A recently announced result by the first author and Konstantinos Panagiotou~\cite{HPbdd} strongly suggests this (at
least for $p=\frac12$). 
Recall that the $t$-bounded chromatic number $\chi_{t}(G)$ is defined like the normal chromatic number except that we only allow colourings in which all colour classes have size at most $t$.  The announced result is that the $(\alpha-2)$-bounded chromatic number of $G_{n, m}$ with $m= \frac 12\binom{n}{2}$ is $2$-point concentrated. 
In other words, once $\alpha$-sets and $(\alpha-1)$-sets are banned as colour classes, and the number of edges is fixed, the required number of colours is extremely narrowly concentrated. It is easy to see that, in $\Gnh$, the variation in the number of edges only has a very small effect on the chromatic number, accounting for fluctuations of order at most $\log n$; see \S3 of~\cite{heckel2019nonconcentration} for a simple coupling argument showing this.

The full conjecture, therefore, states that the maximum of the lower bounds \eqref{eq:firstlowerbound} and \eqref{eq:secondlowerbound} is indeed the correct concentration interval length for $\chi(\Gnh)$ --- at least whenever $\muexp(n)$ is bounded away from $0$ and $1$.

Ignoring terms of size $n^{o(1)}$, a simplified statement is the following.
\begin{conjecture}[Zigzag Conjecture; Bollob\'as, Heckel, Morris, Panagiotou, Riordan and Smith]\label{conj:zz}
 Set $p=\frac 12$ and define $\muexp=\muexp(n)$ as in \eqref{eq:mua}. Let 
\begin{equation}\label{eq:ln}
 \lambda = \lambda(n) := \max\left(\frac{\muexp}{2}, \frac{1-\muexp}{2}\right).
\end{equation}
Then there is a sequence of intervals of length
$ n^{\lambda+o(1)} $
which contains $\chi(G_{n,\frac 12})$ whp. However, for any fixed $\eps>0$ and any sequence $(I_n)_{n \in \N}$ of intervals of length 
$ n^{\lambda-\eps}$,
we have
\[
 \Pb\left(\chi(\Gnh) \in I_n\right) =o(1).  
\]
\end{conjecture}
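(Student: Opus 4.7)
The plan is to split the conjecture into an upper bound, stating that $\chi(\Gnh)$ lies in some interval of length $n^{\lambda+o(1)}$ whp, and a lower bound, stating that no sequence of intervals of length $n^{\lambda-\eps}$ can contain $\chi(\Gnh)$ with probability bounded away from zero. The lower bound splits further according to which of the two terms in~(\ref{eq:ln}) dominates: when $\muexp(n)\ge 1/2$ the fluctuations of $X_\alpha$ drive non-concentration, and when $\muexp(n)\le 1/2$ those of $X_{\alpha-1}$ do. I would handle these two regimes separately, by a common recipe of conditioning on the corresponding independent-set count and exhibiting a quantitative Lipschitz dependence of $\chi$ on it.

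For the $X_\alpha$ regime, Theorem~\ref{theorem:nstar} essentially provides the right bound $\sqrt{\mu_\alpha}/\log n = n^{\muexp/2-o(1)}$, but at some nearby integer $n^*=(1+o(1))n$ rather than at $n$ itself. One way to close this last gap is to combine Theorem~\ref{theorem:nstar} with the vertex-deletion coupling: since $|\chi(G_{m,1/2})-\chi(G_{m',1/2})|\le|m-m'|$ in the natural induced-subgraph coupling, non-concentration at $n^*$ transfers to $n$ provided $|n^*-n|$ is genuinely smaller than the width being asserted, which an inspection of the argument behind Theorem~\ref{theorem:nstar} should provide. Alternatively one could try to replace the $n$-comparison step in that proof by a direct Doob-martingale decomposition at fixed $n$ conditional on $X_\alpha$, using the announced estimate~(\ref{eq:announcedbounds}) with $a=\alpha$ to show that the conditional mean of $\chi$ is a Lipschitz function of $X_\alpha$ with slope of order $1/\log n$, and then invoking the Poisson approximation for $X_\alpha$.

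For the $X_{\alpha-1}$ regime I would formalise the heuristic around~(\ref{eq:decrease})--(\ref{eq:secondlowerbound}). Two ingredients are needed. First, an extension of the announced result of~\cite{HPbdd} showing that, conditional on the edge count and on $X_{\alpha-1}$, the chromatic number of $\Gnh$ equals the corresponding conditional first-moment threshold up to $n^{o(1)}$ error; the announced $2$-point concentration of $\chi_{\alpha-2}$ at fixed edge count is the main tool here. Second, a derivative computation for this threshold in $X_{\alpha-1}$: removing $r=\sqrt{\mu_{\alpha-1}}$ of the available $(\alpha-1)$-sets should cost roughly $rl/(m\log^2 n)=n/(\sqrt{\mu_{\alpha-1}}\log^3 n)=n^{(1-\muexp)/2-o(1)}$ extra colours, where $l=\Theta(n/\log n)$ is the number of $(\alpha-1)$-classes in an optimal profile and $m=\mu_{\alpha-1}$. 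Combining this Lipschitz bound with the approximate Poisson distribution of $X_{\alpha-1}$ yields the required lower bound.

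The matching upper bound is the main obstacle and, in my view, the deepest part of the conjecture. The natural strategy is to show that conditional on the triple $(e(\Gnh),X_\alpha,X_{\alpha-1})$ the chromatic number is concentrated on $n^{o(1)}$ consecutive values. Given the announced $2$-point concentration of $\chi_{\alpha-2}$ at fixed edge count, one would need a structural theorem saying that an almost-optimal colouring of $\Gnh$ almost always uses a number of $\alpha$-classes and of $(\alpha-1)$-classes determined to within $n^{o(1)}$ by $X_\alpha$ and $X_{\alpha-1}$, together with a sufficiently tight conditional first-moment calculation turning this into a point estimate for $\chi$. Controlling how the available large independent sets overlap, and showing that enough of them can almost always be packed together into a single colouring, is the hard part and lies well beyond what straightforward moment calculations can supply; this is where I expect any attempt on the Zigzag Conjecture with current technology to run into serious difficulty.
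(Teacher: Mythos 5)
The statement you are addressing is Conjecture~\ref{conj:zz}, which the paper explicitly leaves open: it offers only heuristic justification (\S\ref{ss:zigzag}, \S\ref{s:intuition}) and partial results (Theorems~\ref{theorem:nstar} and~\ref{theorem:polylog}), so there is no proof in the paper to compare against, and your proposal is likewise not a proof but a programme whose essential ingredients are unproven. You acknowledge this for the upper bound, but the gaps in the lower-bound part are just as real. First, your transfer of non-concentration from $n^*$ to $n$ via the induced-subgraph Lipschitz bound $|\chi(G_{m,1/2})-\chi(G_{m',1/2})|\le |m-m'|$ cannot work: in the proof of Theorem~\ref{theorem:nstar} the window $[n^-,n^+]$ must have length at least $5\Delta/\delta$, and with $\Delta=o(n/\log^2 n)$ (or even the announced $O(n^{0.99})$) and $\delta=\Theta(\eps/\log^2 n)$ this is vastly larger than the asserted width $n^{\lambda}\le n^{1/2}$, so $|n^*-n|$ is not ``genuinely smaller than the width''; the paper itself stresses that failing to localise at a given $n$ is a feature of the method, not an oversight one can remove by inspection. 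Your alternative route --- conditioning on $X_\alpha$ and using \eqref{eq:announcedbounds} to show the conditional mean of $\chi$ is Lipschitz in $X_\alpha$ with slope $\Theta(1/\log n)$ --- also does not follow from anything available: \eqref{eq:announcedbounds} is an unconditional estimate for $\chi_{a-1}$ with error $O(n^{0.99})$, far too coarse to control a conditional mean to accuracy $o(\sqrt{\mu_\alpha}/\log n)$, and no conditional concentration result given $X_\alpha$ is known.

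Second, in the $X_{\alpha-1}$ regime your two ``ingredients'' (a conditional version of the announced result of~\cite{HPbdd} accurate to $n^{o(1)}$ given the edge count and $X_{\alpha-1}$, and a derivative computation for the conditional first-moment threshold in $X_{\alpha-1}$) are precisely what is missing; the paper proves nothing in this regime, and even its strongest lower bound, Theorem~\ref{theorem:polylog}, is conditional on \eqref{eq:announcedbounds}, applies only at special values $n^*$ with $\mu_\alpha=\Theta(n/\log^2 n)$ (i.e.\ $\muexp$ near $1$, the ``zig'' side), and again does not localise at a prescribed $n$. The heuristic around \eqref{eq:decrease}--\eqref{eq:secondlowerbound} is exactly that --- a heuristic --- and ``formalising'' it would require controlling how the chromatic number responds to fluctuations of $X_{\alpha-1}$ through the packing of overlapping $(\alpha-1)$-sets, a problem of the same depth as the upper bound you rightly identify as out of reach. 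In short, your proposal is a reasonable description of why the conjecture is believed and of where the difficulties lie, but it does not constitute a proof, and the specific shortcuts you suggest for the $X_\alpha$ lower bound fail quantitatively.
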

An analogous statement presumably holds for any constant $p\in (0,1-1/e^2]$, or perhaps $p\in (0,1-1/e^2)$.

Conjecture~\ref{conj:zz} would imply that the concentration interval length of $\chi(\Gnh)$ fluctuates between $n^{\frac 14+o(1)}$ and $n^{\frac 12+o(1)}$ as shown in Figure~\ref{fig:conjecture}.

Theorem~\ref{theorem:nstar} \emph{almost} proves the first lower bound \eqref{eq:firstlowerbound} coming from fluctuations in $X_\alpha$: we show that, for any integer $n$ with $\muexp(n)$ bounded away from $1$, there is another integer $n^*$ nearby such that \eqref{eq:firstlowerbound} holds. It is of course extremely unlikely that the width of the distribution of $\chi(\Gnh)$ is significantly different between $n$ and $n^*$, so our result presumably holds for all $n$, but we cannot prove this.

\subsubsection{Further conjectures}\label{ss_furtherconjs}

In this section we state a number of further conjectures refining Conjecture~\ref{conj:zz}. We will explain the intuition behind these conjectures in the appendix, \S\ref{s:intuition}. 
So far we have focussed on the width of the distribution as measured by concentration in an interval; here it will often be more convenient to work with the variance. Of course we expect these to be equivalent: if $Y_n:=\chi(\Gnh)$ then we \emph{expect} that $Y_n$ is concentrated on some sequence of intervals of length $\ell_n$ if and only if $\ell_n/\sigma_n\to\infty$, where $\sigma_n^2=\Var(Y_n)$. However, we do not know this, only the one-way implication that small variance implies tight concentration. 

We start with a conjecture on the worst case concentration width: we believe that, up to a constant factor, the lower bound given in Theorem \ref{theorem:polylog} is optimal.

\begin{conjecture}\label{conj:wc}
Let $p\in (0,1-1/e^2]$ be constant, let $Y_n=\chi(\Gnp)$, let $\sigma_n^2=\Var(Y_n)$, and set
\begin{equation}\label{wndef}
 w_n := \frac{\sqrt{n}\log\log n}{\log^3 n}.
\end{equation}
Then 
\begin{equation}\label{eq:cwc}
 0 < \limsup \frac{\sigma_n}{w_n} < \infty.
\end{equation}
Moreover, for any constant $c>0$ there is a constant $d>0$ such that along any sequence
of integers $n$ with $\mu_{\alpha(n)}(n)\sim c n/\log^2n$ we have $\sigma_n\sim d w_n$.
\end{conjecture}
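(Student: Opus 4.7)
The lower half of \eqref{eq:cwc}, namely $\limsup \sigma_n/w_n > 0$, is essentially already in hand (conditionally) via Remark~\ref{rem:var}: Theorem~\ref{theorem:polylog} exhibits a sequence of integers $n^*$ along which $\chi(\Gnh)$ fails whp to lie in any interval of length $cw_{n^*}/2$ centred at its mean, and Chebyshev's inequality then forces $\sigma_{n^*}^2 = \Omega(w_{n^*}^2)$. So the genuinely new content of Conjecture~\ref{conj:wc} is the matching upper bound $\limsup \sigma_n / w_n < \infty$, and the sharper asymptotic $\sigma_n \sim d w_n$ along any subsequence with $\mu_{\alpha(n)}(n) \sim cn/\log^2 n$.

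The natural vehicle for both is the decomposition suggested by \eqref{eq:announcedbounds}: write $\chi(\Gnh) = \chi_{\alpha-2}(\Gnh) - S(\Gnh)$, where $S$ counts the colours saved by allowing classes of size $\alpha$ and $\alpha-1$. A suitable strengthening of the announced result of~\cite{HPbdd} should give that $\chi_{\alpha-2}(\Gnh)$ is, up to fluctuations induced by the edge count, a deterministic function of $n$; combined with the edge-coupling argument of~\cite{heckel2019nonconcentration}, this contributes only variance $O(\log^2 n) = o(w_n^2)$ to $\chi(\Gnh)$. The plan is then to prove that $S$ is well approximated, to additive error $o(w_n)$, by an affine function $S \approx \beta_\alpha X_\alpha + \beta_{\alpha-1} X_{\alpha-1} + (\text{deterministic})$, with explicit scalars $\beta_\alpha, \beta_{\alpha-1}$ representing the marginal saving in colours from a single additional large colour class, of order $1/\log n$ and $n/(\mu_{\alpha-1}\log^3 n)$ respectively, as foreshadowed by the heuristics around \eqref{eq:firstlowerbound} and \eqref{eq:secondlowerbound}.

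Granted such an affine approximation, the upper bound is automatic from the standard moment estimates $\Var(X_\alpha) \le (1+o(1))\mu_\alpha$ and $\Var(X_{\alpha-1}) = (1+o(1))\mu_{\alpha-1}$ coming from the Poisson/normal approximations behind Theorem 11.9 of~\cite{bollobas:randomgraphs}: substituting yields $\Var(S) = O(\mu_\alpha/\log^2 n + n^2/(\mu_{\alpha-1}\log^6 n))$, which is $O(w_n^2)$ uniformly in $n$ and is maximised precisely at the crossover scale $\mu_\alpha = \Theta(n/\log^2 n)$. For the sharper assertion, one pins down the constants $\beta_\alpha, \beta_{\alpha-1}$ and uses approximate independence of $X_\alpha$ and $X_{\alpha-1}$ to conclude $\sigma_n^2 \sim \beta_\alpha^2 \mu_\alpha + \beta_{\alpha-1}^2 \mu_{\alpha-1}$, in which both summands are of order $w_n^2$ with relative weight determined by $c$.

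The main obstacle is the structural step. Even conditional on $X_\alpha$ and $X_{\alpha-1}$, an optimal colouring must pack genuinely disjoint large independent sets into a partition, and the actual savings depend on the intersection pattern of the available $\alpha$- and $(\alpha-1)$-sets, not just their counts. Establishing the affine approximation therefore requires a refinement of~\cite{HPbdd} that tracks not only the leading order of $\chi_{\alpha-2}$ but also its lower-order dependence on the random collection of large independent sets, most plausibly by combining a first-moment/entropy analysis over colour-class profiles (as sketched in \S\ref{ss:zigzag}) with a second-moment or switching argument comparing the true random collections of $\alpha$- and $(\alpha-1)$-sets to idealised independent samples. This is essentially the same missing ingredient needed to settle Conjecture~\ref{conj:zz} already at its weaker $n^{o(1)}$ precision, and it is where all the genuine difficulty in Conjecture~\ref{conj:wc} resides.
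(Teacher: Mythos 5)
You are attempting to prove a statement that the paper itself only \emph{conjectures}: the paper establishes nothing beyond the first inequality of \eqref{eq:cwc}, and that only conditionally, via Theorem~\ref{theorem:polylog} and Remark~\ref{rem:var} --- which you correctly reproduce. Your plan for the remaining (genuinely open) parts is honest about leaving the structural step to a hoped-for strengthening of~\cite{HPbdd}, so it is a programme rather than a proof; that by itself would be acceptable as a sketch. The problem is that the programme, as stated, is quantitatively wrong in exactly the regime the conjecture is about. With your coefficient $\beta_\alpha=\Theta(1/\log n)$, the contribution of $X_\alpha$ to $\Var(S)$ is $\Theta(\mu_\alpha/\log^2 n)$. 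At the scale $\mu_{\alpha(n)}(n)\sim cn/\log^2 n$ this is $\Theta(n/\log^4 n)$, which exceeds $w_n^2=n(\log\log n)^2/\log^6 n$ by a factor $\log^2 n/(\log\log n)^2$; and since $\mu_\alpha$ can be as large as $n^{1-o(1)}$ (indeed close to $n$), your expression $\mu_\alpha/\log^2 n + n^2/(\mu_{\alpha-1}\log^6 n)$ is maximised near $\mu_\alpha\approx n$, where it is about $n/\log^2 n$, not at $\mu_\alpha=\Theta(n/\log^2 n)$. So the claimed ``automatic'' upper bound $\Var(S)=O(w_n^2)$ does not follow from your own model, and the model does not single out the worst-case scale in \eqref{wndef}.

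The missing ingredient is the saturation effect described in \S\ref{s:intuition} and encoded in cases (ii)--(iv) of Conjecture~\ref{conj:4cases}. The ``each $\alpha$-set saves $\Theta(1/\log n)$ colours'' heuristic behind \eqref{eq:firstlowerbound} is explicitly flagged in the paper as an oversimplification: once $x=\Theta(\mu_\alpha\alpha^2/n)$ is of order $1$ or larger, the $\alpha$-sets overlap, only a near-maximum matching of them can be used (with $\dd t/\dd m=\Theta(1/x)$ when $x\to\infty$), and the benefit per usable $\alpha$-set is $B\sim(\log\log n+|\log x|)/(c_0\log^2 n)$, not $\Theta(1/\log n)$. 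It is precisely the product of these two corrections that caps the standard deviation at $\Theta(w_n)$ and makes $\mu_\alpha=\Theta(n/\log^2 n)$ the maximiser; an affine approximation $S\approx\beta_\alpha X_\alpha+\beta_{\alpha-1}X_{\alpha-1}$ with $n$-independent-in-shape coefficients of the orders you propose cannot capture either correction, and in particular misses the $\log\log n$ factor in $w_n$ altogether. Relatedly, your final claim $\sigma_n^2\sim\beta_\alpha^2\mu_\alpha+\beta_{\alpha-1}^2\mu_{\alpha-1}$ with explicit constants overreaches even the paper's own expectations: in the crossover regime $x=\Theta(1)$ the paper states it does not even have a guess for the implicit constant (it should depend on the matching function $h(x)$), which is why the conjecture asserts only existence of $d=d(c)$. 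Any serious attempt at the upper bound has to replace your linear-in-$X_\alpha$ ansatz by a functional of the matching structure of the $\alpha$-sets (and of the near-disjoint families of $(\alpha-1)$-sets), which is a different and harder object than the counts $X_\alpha$, $X_{\alpha-1}$ alone.
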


As noted in Remark~\ref{rem:var}, Theorem~\ref{theorem:polylog} implies the first inequality in \eqref{eq:cwc}, subject to \eqref{eq:assump}.

We have a corresponding conjecture for the best case, although we are less confident of this,
so we state only the basic form.
\begin{conjecture}\label{conj:min}
Let $p\in (0,1-1/e^2]$ be constant, let $Y_n=\chi(\Gnp)$, let $\sigma_n^2=\Var(Y_n)$, and set
\begin{equation}\label{twdef}
 \widetilde{w}_n := \frac{n^{1/4}}{\log^{7/4} n}.
\end{equation}
Then 
\[
 0 < \liminf \frac{\sigma_n}{\widetilde{w}_n} < \infty.
\]
\end{conjecture}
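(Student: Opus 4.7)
Conjecture~\ref{conj:min} has two parts: positivity of $\liminf \sigma_n/\widetilde{w}_n$, asserting that the variance of $\chi(\Gnp)$ cannot drop below order $n^{1/4}/\log^{7/4}n$ for any $n$, and finiteness, asserting that this level is actually reached along a subsequence. Both parts are best understood in terms of the Zigzag picture: the two lower bounds \eqref{eq:firstlowerbound} and \eqref{eq:secondlowerbound} cross, as functions of $\muexp$, precisely at $\muexp = \tfrac 12 - \tfrac{3\log\log n}{2\log n}+o\bb{\tfrac{\log\log n}{\log n}}$, and at that crossing both equal $\Theta\bb{n^{1/4}/\log^{7/4}n}$. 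Equivalently, the best-case $n$ are those with $\mu_\alpha(n)=\Theta(\sqrt{n}/\log^{3/2}n)$; this is where I would focus.

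\textbf{Positivity.} To show $\sigma_n \ge c\widetilde{w}_n$ for all large $n$, it suffices to prove, for every $n$,
\[
 \sigma_n \ge c' \max\!\left(\frac{n^{\muexp/2}}{\log n},\ \frac{n^{(1-\muexp)/2}}{\log^{5/2} n}\right).
\]
The first inequality is essentially Theorem~\ref{theorem:nstar}, but with the ``nearby $n^*$'' weakening removed; the main obstruction in the existing proof is a smoothness argument comparing $\chi(\Gnp)$ at two values of $n$, which only localises to a nearby integer. Upgrading this comparison using the sharper estimate \eqref{eq:announcedbounds} in place of \eqref{eq:estimate}, and translating from ``concentration width'' directly to ``variance'', should close the gap. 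The second inequality is the main new step: conditioning on $X_{\alpha-1}=m\pm r$ with $r\approx\sqrt{\mu_{\alpha-1}}$ changes the expected number of optimum $k$-colourings by the factor in \eqref{eq:decrease}; combined with the fact (Corollary~\ref{dnk}) that one extra colour multiplies this expectation by $\exp(\Theta(\log^2 n))$, and with \eqref{eq:announcedbounds} to convert this into a change in $\chi_{\alpha-1}$, plus Lemma~\ref{lem:as1} to pass from $\chi_{\alpha-1}$ to $\chi$, this gives the $X_{\alpha-1}$-based lower bound.

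\textbf{Finiteness.} This part requires constructing a subsequence $n_k$ along which $\sigma_{n_k} = O(\widetilde{w}_{n_k})$; by the analysis above, the natural candidate is $\mu_\alpha(n_k) = \Theta(\sqrt{n_k}/\log^{3/2}n_k)$. A two-scale second-moment strategy seems to be needed: condition on the pair $(X_\alpha, X_{\alpha-1})$, which is approximately bivariate Poisson, and show that given this pair the chromatic number is well approximated by a deterministic function of the pair with residual variance $o(\widetilde{w}_n^2)$. Explicitly, one would build a random colouring by using (almost) all available $\alpha$-sets, then a near-maximum disjoint family of $(\alpha-1)$-sets on the leftover vertices, then colouring the residue greedily; and perform a sharp second-moment computation on the number of colours used. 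The contributions from the Poisson fluctuations of $X_\alpha$ and $X_{\alpha-1}$ would then combine to give variance $\Theta(\widetilde{w}_n^2)$ at the crossing regime, matching the positivity bound.

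\textbf{Main obstacle.} The finiteness part is the true heart of this conjecture and is dramatically harder than positivity. Any upper bound on $\sigma_n$ smaller than $\sqrt{n}/\log n$ would already improve Alon's strengthening of Shamir--Spencer, and driving it down to $n^{1/4}/\log^{7/4}n$ requires essentially matching first-moment upper and lower bounds together with a matching variance bound for the number of colourings with the optimum profile --- the kind of super-sharp computation that currently only \eqref{eq:announcedbounds} approaches, and only on the first-moment side. Thus while the positivity direction seems plausibly accessible by combining existing ideas with the announced estimate, finiteness remains a genuine open problem that will require a substantial technical advance.
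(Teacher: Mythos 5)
First, a point of framing: the statement you are proving is Conjecture~\ref{conj:min}, which the paper does not prove --- it is motivated only heuristically (see the remark following Conjecture~\ref{conj:4cases}: extrapolating the formulae of cases (i) and (iv) to the `bad' regime $\mu_{\alpha(n)}(n)=\Theta(\sqrt{n}/\log^{3/2}n)$ makes both the $\alpha$-set and $(\alpha-1)$-set contributions of order $n^{1/4}/\log^{7/4}n$), and the authors explicitly say they are less confident of it. Your identification of the crossing point of \eqref{eq:firstlowerbound} and \eqref{eq:secondlowerbound}, and of the best-case regime, is correct and matches the paper's motivation. But what you have written is a research programme, not a proof, and the gaps are substantial. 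For positivity: the $X_\alpha$-based bound is known (and only conditionally, via Theorem~\ref{theorem:polylog}-type arguments) only at \emph{some} $n^*$ near a given $n$; this is not a defect that \eqref{eq:announcedbounds} repairs --- the localisation to a nearby $n^*$ is intrinsic to the framework/coupling method (the paper calls it ``a feature of the method''), since Lemma~\ref{lem:framework} only produces one wide interval somewhere in a window of $n$'s. More seriously, the $X_{\alpha-1}$-based bound has no rigorous route at all. The planting mechanism of Lemma~\ref{lemma:coupling} cannot be run with $a=\alpha-1$: by Lemma~\ref{lemma:growthlowerbound} the slope of the chromatic-number estimate is $\tfrac{1}{\alpha}+\tfrac{1-\muexp}{\alpha^2}+o(1/\log^2 n)$, which lies \emph{below} $1/(\alpha-1)$, so hypothesis \eqref{fra2} fails and no contradiction can be extracted. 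Your alternative --- conditioning on $X_{\alpha-1}$ and comparing expected numbers of optimum-profile colourings via \eqref{eq:decrease} and Corollary~\ref{dnk} --- is exactly the paper's heuristic from \S\ref{ss:zigzag}, and converting a conditional first-moment computation into a statement about $\chi$ itself is precisely what is missing (the paper's \S\ref{s:intuition} even warns that the first moment may not be an accurate guide). Note also that \eqref{eq:announcedbounds} is an unconditional whp estimate with error $O(n^{0.99})$, vastly larger than the scale $n^{1/4-o(1)}$ you need to resolve; it is useful only as the coarse input $\Delta$ to a framework-type argument, which, as just explained, is unavailable here.

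For finiteness, you are right that this is the heart of the conjecture and is open; but your proposed two-scale second-moment argument conditioning on $(X_\alpha,X_{\alpha-1})$ is essentially a restatement of Conjectures~\ref{conj:normal} and~\ref{conj:whynormal} (that $\chi$ is a deterministic function of the independent-set counts up to error $o(g(n))$), not an argument for them: the step ``show that given this pair the chromatic number is well approximated by a deterministic function with residual variance $o(\widetilde{w}_n^2)$'' is the entire difficulty, and nothing in the paper, including the announced results of~\cite{HPbdd}, controls $\chi$ to within $n^{1/4-o(1)}$ even in expectation. So the honest summary is: your heuristics agree with the paper's, your positivity sketch overstates what the existing machinery (Theorem~\ref{theorem:nstar}, Lemma~\ref{lem:framework}, Corollary~\ref{corollary:coupling}, \eqref{eq:announcedbounds}) can deliver --- in particular the ``for every $n$'' strengthening and the $(\alpha-1)$-set lower bound are both out of reach by these methods --- and the finiteness half remains entirely conjectural, as you yourself acknowledge.
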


In fact, we believe that for most (probably all) $n$, the chromatic number is asymptotically normally distributed, with a variance that follows (a refined version of) the graph suggested by the Zigzag Conjecture. We are least confident about points close to the minima in this graph, which we call `bad'.

Fix a constant $\delta>0$, and call $n$ `bad' if $n^{1/2-\delta}\le \mu_{\alpha(n)}(n)\le n^{1/2+\delta}$, and `good' otherwise.

\begin{conjecture}\label{conj:normal}
Let $p\in (0,1-1/e^2]$ be constant, and let $Y_n=\chi(\Gnp)$.
There are functions $f(n)$ and $g(n)$ such that, at least for `good' $n$,
\[
 \frac{Y_n-f(n)}{g(n)} \dto N(0,1),
\]
where $N(0,1)$ is a standard Gaussian distribution. Moreover, $g(n)=n^{\lambda(n)+o(1)}$,
where $\lambda(n)$ is defined in \eqref{eq:ln}.
\end{conjecture}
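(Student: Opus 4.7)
The plan is to split into the two regimes defined by which of the two Zigzag lower bounds dominates, and in each regime show that $\chi(\Gnh)$ is, up to lower order, a smooth function of a single count variable that itself satisfies a central limit theorem.

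Concretely, I would start from the sharper estimate \eqref{eq:announcedbounds} applied so that $\chi_{\alpha-2}(G_{n,m})$ is essentially deterministic when the edge count $m$ is fixed; coupling $\Gnh$ to $G_{n,M}$ with $M=e(\Gnh)$, the edge-count contribution to $\chi$ is of order $\log n$ (via the simple monotone coupling from \S3 of \cite{heckel2019nonconcentration}), hence negligible compared to $g(n)$. Then I would try to show that the gap $\chi_{\alpha-2}(\Gnh) - \chi(\Gnh)$ is, to leading order, a deterministic increasing function of $X_\alpha$ and $X_{\alpha-1}$ alone. Heuristically, an (almost) optimal colouring uses as many $\alpha$-sets and $(\alpha-1)$-sets as possible, each $\alpha$-set saving a predictable $\Theta(1/\log n)$ colours (the argument sketched after \eqref{eq:firstlowerbound}) and each $(\alpha-1)$-set contributing in the way made precise by the profile/entropy calculation leading to \eqref{eq:secondlowerbound}. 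The target is a representation
\[
 \chi(\Gnh) = F(n,M) + \phi_\alpha(X_\alpha) + \phi_{\alpha-1}(X_{\alpha-1}) + o(g(n)),
\]
with explicit, approximately linear functions $\phi_\alpha$ and $\phi_{\alpha-1}$ of slope $-1/\log n$ and $-l/(X_{\alpha-1}\log n)$ respectively, where $l=\Theta(n/\log n)$ is the optimal number of $(\alpha-1)$-classes.

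Granting such a representation, asymptotic normality for good $n$ reduces to a joint CLT for $(M, X_\alpha, X_{\alpha-1})$. Here $M$ is binomial and hence Gaussian; $X_\alpha$ and $X_{\alpha-1}$ are approximately Poisson with means $\mu_\alpha$ and $\mu_{\alpha-1}$ by Theorem~11.9 of \cite{bollobas:randomgraphs}, and when these means tend to infinity a Poisson is asymptotically Gaussian after $\sqrt{\mu}$-rescaling. For $n$ good in the sense of \S\ref{ss_furtherconjs}, $\muexp$ is bounded away from $1/2$, so exactly one of the two count variables dominates the fluctuation, yielding $g(n) = \sqrt{\mu_\alpha}/\log n = n^{\muexp/2-o(1)}$ when $\muexp>1/2+\delta$, and $g(n) = n/(\sqrt{\mu_{\alpha-1}}\log^3 n) = n^{(1-\muexp)/2-o(1)}$ when $\muexp<1/2-\delta$; either way $g(n)=n^{\lambda(n)+o(1)}$.

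The main obstacle is the deterministic reduction step: proving that $\chi(\Gnh)$ really is captured by the optimal-profile heuristic up to additive error of smaller order than $g(n)$. This requires not merely the estimate \eqref{eq:announcedbounds} but a refinement that tracks the dependence of $\chi_{\alpha-2}$, $\chi_{\alpha-1}$ and $\chi$ on $X_\alpha$ and $X_{\alpha-1}$ within the error terms, essentially a matching lower bound together with an $\alpha$-set "stripping" lemma of the form: with high probability, there exists an optimal colouring using all but $o(g(n)\log n)$ of the $\alpha$-sets as colour classes. Such a structural result is well beyond current techniques, since even locating $\chi$ to within $o(n/\log^2 n)$ in \eqref{eq:estimate} was highly non-trivial. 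A secondary obstacle is the "bad" $n$ near the crossover: there one must control the joint distribution of $(X_\alpha, X_{\alpha-1})$ beyond marginal Poisson approximations, and argue that their correlation (coming from shared vertices and edges) is negligible on the relevant scale, so that the limit is a sum of two independent Gaussians, which is again Gaussian but with a variance depending continuously on $\muexp$.
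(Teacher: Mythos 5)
This statement is Conjecture~\ref{conj:normal}: the paper does not prove it, and offers only heuristic justification (\S\ref{ss_furtherconjs} and the appendix \S\ref{s:intuition}), remarking explicitly that a proof is nowhere in sight. Your proposal is, in substance, that same heuristic rather than a proof. The decisive step --- the representation $\chi(\Gnh)=F(n,M)+\phi_\alpha(X_\alpha)+\phi_{\alpha-1}(X_{\alpha-1})+o(g(n))$ --- is precisely the content of what is being conjectured: it is essentially Conjecture~\ref{conj:whynormal}, which asserts that $Y_n$ is, up to an error of smaller order than the standard deviation, an affine function of the count of independent sets of the dominant size $a(n)$. Once that representation is granted, everything else you invoke (approximate Poisson laws for $X_\alpha$, $X_{\alpha-1}$, normal limits as the means diverge, the $O(\log n)$ edge-count coupling) is routine and carries no real content. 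So the proposal assumes what is to be proved, and, as you yourself acknowledge, no available input comes close to the required accuracy: \eqref{eq:announcedbounds} locates $\chi_{a-1}$ only to within $O(n^{0.99})$, which exceeds $g(n)\le n^{1/2+o(1)}$ by a polynomial factor, and the announced two-point concentration of $\chi_{\alpha-2}(G_{n,m})$ says nothing by itself about how the excluded $\alpha$- and $(\alpha-1)$-sets re-enter. Identifying the obstacle is honest, but it means there is no proof here to assess beyond the heuristic the paper already gives.

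Two smaller points on the heuristic itself. First, your transfer slope for the $(\alpha-1)$-term is inconsistent with your own conclusion: by the calculation leading to \eqref{eq:secondlowerbound}, a change $r$ in $X_{\alpha-1}$ changes the log of the expected number of optimal-profile colourings by roughly $rl/\mu_{\alpha-1}$, and one extra colour is worth $\exp\bb{\Theta(\log^2 n)}$ (Corollary~\ref{dnk}), so $\phi_{\alpha-1}$ should have slope of order $l/(X_{\alpha-1}\log^2 n)$, not $l/(X_{\alpha-1}\log n)$; with your stated slope you would land a factor $\log n$ away from \eqref{eq:secondlowerbound} (harmless at the $n^{o(1)}$ precision of Conjecture~\ref{conj:normal}, but a sign the bookkeeping needs care). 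Second, the paper's finer conjectures deliberately work with a single size $a(n)=\floor{\alpha_0(n)-1/2}$: for good $n$ exactly one of your two terms matters, and when $\muexp$ is near $0$ the variable $X_\alpha$ is Poisson with small (possibly bounded) mean, hence not asymptotically Gaussian, so your formulation should state explicitly that this term is then absorbed into the $o(g(n))$ error --- which is plausible, since its fluctuation $\sqrt{\mu_\alpha}/\log n$ is indeed of smaller order than $g(n)$ for good $n$ --- rather than appearing as a Gaussian summand.
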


For good $n$, the dominant source of the variation should be (as described earlier) the variation in the number of independent sets of a certain size $a=\alpha-1$ or $a=\alpha$, depending on the parameters. Specifically, let
\begin{equation}\label{andef}
 a(n) := \floor{\alpha_0(n)-1/2},
\end{equation}
so, for good $n$, we have
\[
 n^{1/2+\delta+o(1)} \le \mu_{a(n)}(n) \le n^{3/2-\delta+o(1)}.
\]
One can alternatively take this to \emph{define} $a(n)$. For bad $n$, at least at a certain transition point that we don't identify precisely, it is not clear how to define $a(n)$. Indeed, in a certain range two sizes should contribute. However, the distribution should still be asymptotically normal, since a linear combination of two Gaussians is Gaussian.

\begin{conjecture}\label{conj:whynormal}
Let $p\in (0,1-1/e^2]$ be constant, let $Y_n=\chi(\Gnp)$, and let $Z_n$ be the number of independent sets of size $a(n)$ in $\Gnp$, where $a(n)$ is defined in \eqref{andef}.
Then there are functions $f(n)$, $g(n)$ such that, for good $n$,
\[
 \left(  \frac{Y_n-f(n)}{g(n)}, \frac{Z_n-\E Z_n}{\sqrt{\Var(Z_n)}} \right) \dto (Z,Z),
\]
where $Z\sim N(0,1)$.
\end{conjecture}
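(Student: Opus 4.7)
The plan is to reduce the joint convergence to two cleanly separated tasks: (a) a marginal central limit theorem for $Z_n$, and (b) a near-deterministic affine coupling expressing $Y_n$ in terms of $Z_n$ up to an $o_p(g(n))$ error. Writing $\zeta_n := (Z_n - \E Z_n)/\sqrt{\Var(Z_n)}$, once we have $\zeta_n \dto Z$ marginally and $(Y_n - f(n))/g(n) - \zeta_n \to 0$ in probability, the converging-together lemma together with the continuous mapping theorem gives $\bb{(Y_n-f(n))/g(n), \zeta_n} \dto (Z,Z)$. So the entire task splits into a classical CLT for an independent-set count and a structural statement about how $Y_n$ depends on $Z_n$.

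For (a), note that for good $n$ we have $\mu_{a(n)} = n^{\Omega(1)} \to \infty$, and a standard second moment calculation gives $\Var(Z_n) \sim C_p \mu_{a(n)}$ for some $p$-dependent constant $C_p$ (the dominant contribution comes from pairs of $a$-sets sharing exactly $2$ vertices). Asymptotic normality then follows from a dependency-graph CLT, or equivalently from Stein's method on the graph whose vertices are $a$-subsets of $[n]$ and whose edges record intersections of size $\ge 2$; neither route should cause trouble, since the relevant dependency degrees are polynomially smaller than $\mu_{a(n)}$ for good $n$.

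The heart of the matter is (b). The natural decomposition is $Y_n = \chi_{a-1}(\Gnp) - \Delta_n$, where $\Delta_n$ measures the reduction in the number of colours coming from our freedom to use $a$-sets as colour classes. Assuming a substantial strengthening of \eqref{eq:announcedbounds} — namely $\chi_{a-1}(\Gnp) = F(n) + o(g(n))$ whp for some deterministic $F$ — the entire fluctuation of $Y_n$ is carried by $\Delta_n$. The heuristic in \S\ref{ss_furtherconjs} then suggests $\Delta_n = \kappa(n)\, Z_n + o_p(g(n))$ with $\kappa(n) = \Theta(1/\log n)$: each $a$-set used as a colour class covers one "extra" vertex over the typical class size $\alpha_0 - O(1)$ in an optimal $(a-1)$-bounded colouring, saving $\Theta(1/\log n)$ colours. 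Setting $g(n) = -\kappa(n)\sqrt{\Var(Z_n)}$ and $f(n) = F(n) - \kappa(n)\E Z_n$ then delivers the required affine coupling, with the minus sign absorbed into the (allowed) sign of $g(n)$ so that the joint limit is $(Z,Z)$ rather than $(-Z,Z)$.

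The main obstacles are both ingredients of (b). First, the announced bound \eqref{eq:announcedbounds} has error $O(n^{0.99})$, vastly larger than the target $o(g(n)) = o(n^{1/2})$; one needs concentration of $\chi_{a-1}$ with essentially polylogarithmic error, which would demand a dramatic refinement of the second moment analysis underpinning~\cite{HPbdd}, plus careful handling of the residual fluctuations from the binomial edge count. Second, the linearisation $\Delta_n = \kappa(n) Z_n + o_p(g(n))$ must be justified by showing that almost all $a$-sets can simultaneously be incorporated as colour classes (so overlaps contribute only $o_p(g(n))$) and that the per-$a$-set saving is asymptotically deterministic; rigorously identifying $\kappa(n)$ should amount to computing a log-derivative of the threshold $k_{a-1}$ with respect to the number of available $a$-sets. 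This deterministic-structural input is, in our view, the most delicate part of the proof, and appears to lie well beyond current techniques.
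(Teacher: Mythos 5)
The statement you are addressing is Conjecture~\ref{conj:whynormal}: the paper does not prove it, and offers only heuristic motivation (in \S\ref{ss:zigzag} and the appendix \S\ref{s:intuition}), explicitly saying that even predicting $f(n)$ to accuracy $o(g(n))$ is a problem ``for which we do not even have a conjecture''. Your outer skeleton is fine as logic: the converging-together reduction is correct, and the marginal CLT for $Z_n$ via a second moment computation plus Stein's method/dependency graphs is genuinely standard for $\mu_{a(n)}=n^{\Omega(1)}$. But step (b) is not a proof of anything --- it assumes precisely the content of the conjecture. Both required inputs are open: a concentration statement $\chi_{a-1}(\Gnp)=F(n)+o(g(n))$ whp would require control at scale $o(\sqrt{n}/\mathrm{polylog}\,n)$, whereas the only available input \eqref{eq:announcedbounds} has error $O(n^{0.99})$ (and even the deterministic threshold $k_{a-1}$ is only related to the smooth proxy $k^*$ up to $O(\log^2 n)$ in Lemma~\ref{lem:polylog}); and the linearisation $\Delta_n=\kappa(n)Z_n+o_p(g(n))$ is exactly the structural assertion the conjecture encodes. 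Writing it down and then noting it ``lies well beyond current techniques'' leaves the argument circular rather than incomplete in a repairable way.

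There is also a quantitative error in the linearisation as you state it. With $a(n)=\floor{\alpha_0(n)-1/2}$ as in \eqref{andef}, $\mu_a$ ranges up to $n^{3/2-\delta}$, so for a large part of the relevant range the $a$-sets overlap substantially. In the paper's own heuristic (cases 2 and 3 of \S\ref{s:intuition}), the number $t$ of $a$-sets usable as disjoint colour classes is \emph{not} asymptotic to $Z_n$, and the response $\dt/\dm$ is $\Theta(1/x)$ where $\mu_a=\Theta(xn/\log^2 n)$; moreover the saving per $a$-set is $B=\Theta\bigl((\log\log n+|\log x|)/\log^2 n\bigr)$, not $\Theta(1/\log n)$. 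So the correct coefficient is $B\cdot\dt/\dm$, which is what produces the formulae in Conjecture~\ref{conj:4cases}; your $\kappa(n)=\Theta(1/\log n)$ reproduces only the crude \S\ref{ss:zigzag} heuristic valid when almost all $a$-sets are disjoint, and with it your prescribed $g(n)$ would be of the wrong order for $x\not\to 0$. (The sign convention --- absorbing the negative correlation between $Y_n$ and $Z_n$ into the sign of $g(n)$ --- is harmless given how the conjecture is phrased, but worth flagging if $g$ is meant to be a positive scaling as in Conjecture~\ref{conj:normal}.) In short: what you have is a restatement of a coarser version of the paper's heuristic, not a proof, and the two inputs you would need are at least as hard as the conjecture itself.
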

In other words, knowing $f(n)$ and $g(n)$ (which we do not), the value of $Z_n$ is enough to predict $Y_n=\chi(\Gnp)$ up to an error that is $o(g(n))$, i.e., smaller order than the standard deviation. In fact, for good $n$, this $o(\cdot)$ term should be $n^{-\Omega(1)}$. We would expect the conclusion of Conjecture~\ref{conj:whynormal} to hold outside a much smaller `bad' set, perhaps only having to exclude $n$ such that $\mu_{\alpha(n)}(n)$ is $\Theta(h(n))$ for some $h(n)$ close to $n^{1/2}$.

Finally, we believe that, except for `bad' $n$, we can describe the width of the distribution up to a constant factor, and in a significant fraction of cases, up to a $1+o(1)$ factor. Defining $a=a(n)$ as above, define $x=x(n)$ by
\begin{equation}\label{xndef}
 \mu_a(n) = \frac{2xn}{a^2} = \Theta\left(\frac{xn}{\log^2 n}\right).
\end{equation}
The precise normalisation here is not so important; the second formula is the key one.
\begin{conjecture}\label{conj:4cases}
Define $a(n)$ and $x(n)$ as in \eqref{andef} and \eqref{xndef}.
For good $n$, the function $g(n)$ in Conjecture~\ref{conj:normal}, or equivalently $\sigma_n=\sqrt{\Var(Y_n)}$, satisfies the following bounds, with $c_0=2/\log 2$.

\noindent
(i) if $x\to 0$, then
\[
 g(n) \sim \sqrt{\mu_a}\frac{\log\log n+\log(1/x)}{c_0 \log^2 n},
\]
(ii) if $x=\Theta(1)$, then, defining $w_n$ as in \eqref{wndef},
\[
 g(n)=\Theta\left(\sqrt{\mu_a}\frac{\log\log n}{\log^2 n}\right) = \Theta(w_n),
\]
(iii) if $x\to\infty$ with $x=n^{o(1)}$, then
\[
 g(n) \sim  \frac{\sqrt{\mu_a}}{x}\cdot \frac{\log\log n+\log x}{c_0\log^2 n} 
\sim c_1 n^{1/2} \frac{\log\log n+\log x}{\sqrt{x} \log^3 n},
\]
and (iv) if $x\ge (\log n)^C$ for some constant $C>0$, then
\[
 g(n) = \Theta\left(\frac{\sqrt{\mu_a}\log x}{x\log^2 n}\right) = \Theta\left(\frac{\sqrt{n}\log x}{\sqrt{x}\log^3 n}\right).
\]
\end{conjecture}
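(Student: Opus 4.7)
The plan is to build on the first-moment framework of~\cite{HPbdd} and the structural ingredients used for Theorems~\ref{theorem:nstar} and~\ref{theorem:polylog}, promoting them from statements about the typical value of $\chi_{a-1}$ to a description of the full conditional law of $\chi_{a-1}(\Gnh)$ given $X_a$. The target intermediate statement is $\chi_{a-1}(\Gnh)=k^*(X_a)+o(g(n))$ whp, where $k^*(m)$ is the first-moment threshold for having an unordered $k$-colouring whose profile uses at most $m$ classes of size $a$. One then passes from $\chi_{a-1}$ to $\chi$ using that the expected cost of allowing classes of size $\geq a$ is controlled by $\mu_\alpha$ (Lemma~\ref{lem:as1}), and writes $\chi=k^*(X_a)+o(g(n))$ whp. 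Since $X_a$ is asymptotically Gaussian with variance $\sim\mu_a$ (standard Janson/moment-method, as $X_a$ is essentially Poisson in this range), Conjectures~\ref{conj:normal} and~\ref{conj:whynormal} then follow, and the width satisfies $g(n)=|dk^*/dm|\cdot\sqrt{\mu_a}$, which reduces Conjecture~\ref{conj:4cases} to computing $dk^*/dm$ in each regime of $x$.

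To compute $k^*(m)$, I would solve the constrained Lagrangian optimisation for the optimal profile $(n_i)_{i\le a}$ subject to $\sum_i n_i=k$, $\sum_i i\,n_i=n$, and the capacity constraint $n_a\le m$. The unconstrained optimum places $n_a^{\mathrm{opt}}=\Theta(n/\log n)$ classes at the top size, and the parameter $x(n)$ of~\eqref{xndef} is precisely, up to constants, the ratio $m/n_a^{\mathrm{opt}}$. Hence $x\to 0$ corresponds to the capacity constraint biting tightly (case (i), with $n_a=m$), $x\to\infty$ corresponds to it being slack (cases (iii)--(iv), with $n_a\ll m$), and $x=\Theta(1)$ is the transition regime (case (ii)). In each regime one evaluates $\partial\log E_{n,k,a}(n_a^*,m)/\partial m$ and $\partial\log E_{n,k,a}(n_a^*,m)/\partial k$; the latter is $\Theta(\log^2 n)$, which is the content of the calculation leading to Corollary~\ref{dnk}. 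Dividing and tracking the subleading $\log\log n$ and $\log x$ corrections that come from the entropic term $\log\binom{m}{n_a}$ and, in case (i), from the Lagrange multiplier associated with the capacity, recovers the four stated asymptotics for $dk^*/dm$, and hence for $g(n)$. The $\log\log n$ contributions are where the naive heuristic of~\eqref{eq:firstlowerbound} is refined: they come from the $1/i!$ and $\log$-entropy factors in the exact profile optimum, not from the leading counting term alone.

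The main obstacle is showing that $X_a$ really is the unique dominant source of fluctuation, i.e.\ that $\Var(\chi\mid X_a)=o(g(n)^2)$. For good $n$ the announced result of~\cite{HPbdd} strongly suggests this, since once classes of size $\geq a$ are banned the required number of colours should be $2$-point concentrated; but making this rigorous in a form strong enough to preserve a Gaussian limit requires a \emph{conditional} second-moment calculation for the number of unordered $k$-colourings, with $k$ within $O(g(n))$ of $k^*(m)$ and with the $a$-set count held fixed. This conditional second moment has to match the square of the conditional first moment up to a $1+o(1)$ factor, which is a genuinely sharper statement than~\eqref{eq:announcedbounds} and is, I believe, where essentially new ideas will be needed. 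A secondary difficulty is the bad-$n$ regime near $\muexp(n)\approx 1/2$: there both sizes $a$ and $a+1$ contribute, the choice of $a$ in~\eqref{andef} becomes essentially arbitrary, and the analysis has to be carried out with the joint law of $(X_a,X_{a+1})$ rather than $X_a$ alone, which is why Conjecture~\ref{conj:4cases} is stated only for good $n$.
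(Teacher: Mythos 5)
First, note that Conjecture~\ref{conj:4cases} is a conjecture: the paper does not prove it, and offers only the heuristic derivation in \S\ref{s:intuition}, so the right comparison is with that heuristic. Your overall architecture does parallel it in part: you write $g(n)$ as $\sqrt{\Var(X_a)}$ times the sensitivity of a first-moment threshold to the number of available $a$-sets, and you obtain the ``cost of one colour'' $\Theta(\log^2 n)$ from the calculation behind Corollary~\ref{dnk}, exactly as the paper multiplies $\sqrt{\mu_a}$ by $\frac{\dt}{\dm}$ and by the benefit $B\sim(\log\log n+|\log x|)/(c_0\log^2 n)$ coming from Lemma~\ref{lem:polylog}. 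But the mechanism you propose for the $x$-dependence is not the paper's, and as stated it produces the wrong answer in case (iii).

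The concrete problem is your identification of $x$. With $\mu_a=2xn/a^2$ and the unconstrained optimal profile demanding $n_a^{\mathrm{opt}}=\Theta(n/\log n)$ classes of size $a$, the ratio $m/n_a^{\mathrm{opt}}$ is $\Theta(x/\log n)$, not $\Theta(x)$; the correct meaning of $x$ (see \S\ref{s:intuition}) is the expected number of \emph{other} $a$-sets that a fixed $a$-set intersects, since two $a$-sets meet with probability $\sim a^2/n$. Consequently the transition at $x=\Theta(1)$ is about overlaps among the available $a$-sets --- whether the maximum number $t$ of pairwise disjoint $a$-sets is $\sim m$ or $\ll m$ --- and not about whether the capacity constraint $n_a\le m$ binds in the profile optimisation. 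Indeed, for $1\ll x\ll\log n$ (squarely inside case (iii)) the naive demand $\Theta(n/\log n)$ still vastly exceeds $m$, so your capacity-constrained Lagrangian, with entropy $\log\binom{m}{n_a}$ and no disjointness penalty, would simply saturate $n_a=m$ and return the case-(i) formula, missing the crucial factor $1/x$. In the paper's heuristic that factor comes from the hypergraph-matching first-moment threshold \eqref{eq:xy}, which gives $t\approx(2n/a^2)y$ with $y\sim\log x$ and hence $\frac{\dt}{\dm}=\Theta(1/x)$; equivalently, any conditional expectation $E_{n,k,a}$ given the $m$ $a$-sets must include the probability $\exp\bigl(-\Theta((a^2/n)\binom{n_a}{2})\bigr)$ that the chosen size-$a$ classes are disjoint, and it is this term --- not the multiplier of $n_a\le m$ nor the $\binom{m}{n_a}$ entropy --- that generates the $\log x$ and $1/x$ in cases (iii) and (iv). (For $\mu_a\ge n^{1+\Omega(1)}$ the paper in fact switches to the different heuristic of \S\ref{ss:zigzag}, the two agreeing only up to constants.) Two smaller points: your intermediate claim $\chi_{a-1}(\Gnh)=k^*(X_a)+o(g(n))$ cannot be right as written, since an $(a-1)$-bounded colouring uses no classes of size $a$ and so $\chi_{a-1}$ does not see the capacity $m$ (presumably you intend the $a$-bounded chromatic number with $n_a\le m$); and the conditional second-moment difficulty you flag is genuine --- the paper itself stresses that even its heuristic fails as a proof because the graph left after deleting a matching of $a$-sets is not distributed as $G_{n',p}$ --- but it is downstream of the structural error above.
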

Note that the four ranges above cover all good $n$, with some overlap between (iii) and (iv). (The formula for (iii) applies in case (iv) too, but simplifies to (iv) in that case.) We can give a single formula applicable in all cases, but it is not clear that this is informative -- the transition from case (i) to cases (iii)/(iv) is rather arbitrary, since in case (ii) we do not even have a guess as to what the implicit constant should be (as a function of $x$). Still,
defining
\[
 g_0(n) = \sqrt{\mu_a} \frac{\log\log n+|\log x|}{c_0  (1+x)\log^2 n},
\]
in all cases we conjecture that $g(n)=\Theta(g_0(n))$, with $\sim$ in cases (i) and (iii). 

\begin{remark}
If $\log \mu_a / \log n$ is bounded away from $1/2$, $1$ and $3/2$, then the formulae in (i) and (iv) match our earlier heuristics \eqref{eq:firstlowerbound} and \eqref{eq:secondlowerbound} up to constant factors. Thus, cases (i) and (iv) of Conjecture~\ref{conj:4cases} refine the `zig' and `zag'
parts of the Zigzag Conjecture. Case (ii), and also case (iii), interpolate between these parts,
describing the conjectured shape of the top of the zigzag curve. For the bottom, we haven't stated a very detailed conjecture, but extrapolating the formulae in (i) and (iv) suggests that
when $\mu_{\alpha(n)}(n)=\Theta(\sqrt{n}/\log^{3/2}n)$ and $\mu_{\alpha(n)-1}(n)=\Theta(n^{3/2}/\log^{5/2}n)$ (which is within, and indeed in some sense the centre of, the `bad $n$' case), then the
contributions from $(\alpha-1)$-sets and $\alpha$-sets to $g(n)$ should both be of order
$n^{1/4}/\log^{7/4}n$, and this is how Conjecture~\ref{conj:min} arises.
\end{remark}

\begin{remark}\label{rem:matches}
The agreement between the lower bound in Theorem~\ref{theorem:nstar} and case (i) of Conjecture~\ref{conj:4cases} is in some sense surprisingly strong. The formula for $t_{n^*}-s_{n^*}$ in the former matches $g(n^*)$ up to a constant factor, noting that $1/x$ is at least approximately $n^\eps$. Since we may let $\eps$ tend to zero at some rate, and the dependence on $\eps$ matches, this shows that $\sqrt{\Var(Y_n)}=\Omega(g_0(n))$ not for every $n$, but at least for some $n^*$ near any good $n$ with $\mu_a(n)\le n^{1-\gamma(n)}$, where $\gamma(n)$ is a function tending to zero at a rate that we have not determined.
Similarly, it was quite a surprise to us (and not the case when we first formulated the conjectures) that we can prove a (conditional) lower bound (Theorem~\ref{theorem:polylog}) that (for a subsequence) matches the upper bound in Conjecture~\ref{conj:wc}.
\end{remark}

A completely satisfactory understanding of the asymptotic distribution of $\chi(\Gnh)$ would involve two further ingredients: we would like to know $g(n)$ (or $\sigma_n$) asymptotically, not just up to constant factors. It's quite possible that one could read out such a formula from our intuitive justification of the conjectures above (see \S\ref{s:intuition}), though of course we are nowhere near a proof. The second is that we would of course like to know $f(n)$ up to an additive error of $o(g(n))$. This seems to be a much harder problem, for which we do not even have a conjecture. See the discussion in \S\ref{s:intuition}.

\medskip

The rest of the paper is organized as follows. First, in \S\ref{ss:outline}, we outline the general strategy of the proofs. In \S\ref{ss:framework} we state and prove a concrete `framework lemma' that formalizes this strategy, essentially giving a conditional result subject to two ingredients. In \S\ref{ss:coupling} we provide the first ingredient, a simple coupling lemma. The details of the other ingredient vary from case to case; after some preliminaries in \S\ref{ss:prelim} we provide these, and so prove Theorems~\ref{theorem:nstar} and~\ref{theorem:n12}, in \S\ref{ss:nstar} and \S\ref{ss:n12}, respectively. The (very much more involved) argument for Theorem~\ref{theorem:polylog} is given in \S\ref{s:polylog}, with the proof of the key lemmas in \S\ref{ss:polylog2} and \S\ref{ss:alphashift}. Finally, we discuss the intuition behind our more precise conjectures in \S\ref{s:intuition}.

\section{Proofs}

\subsection{Proof outline}\label{ss:outline}

\begin{figure}[tb]
\begin{center}
\begin{overpic}[width=0.9\textwidth]{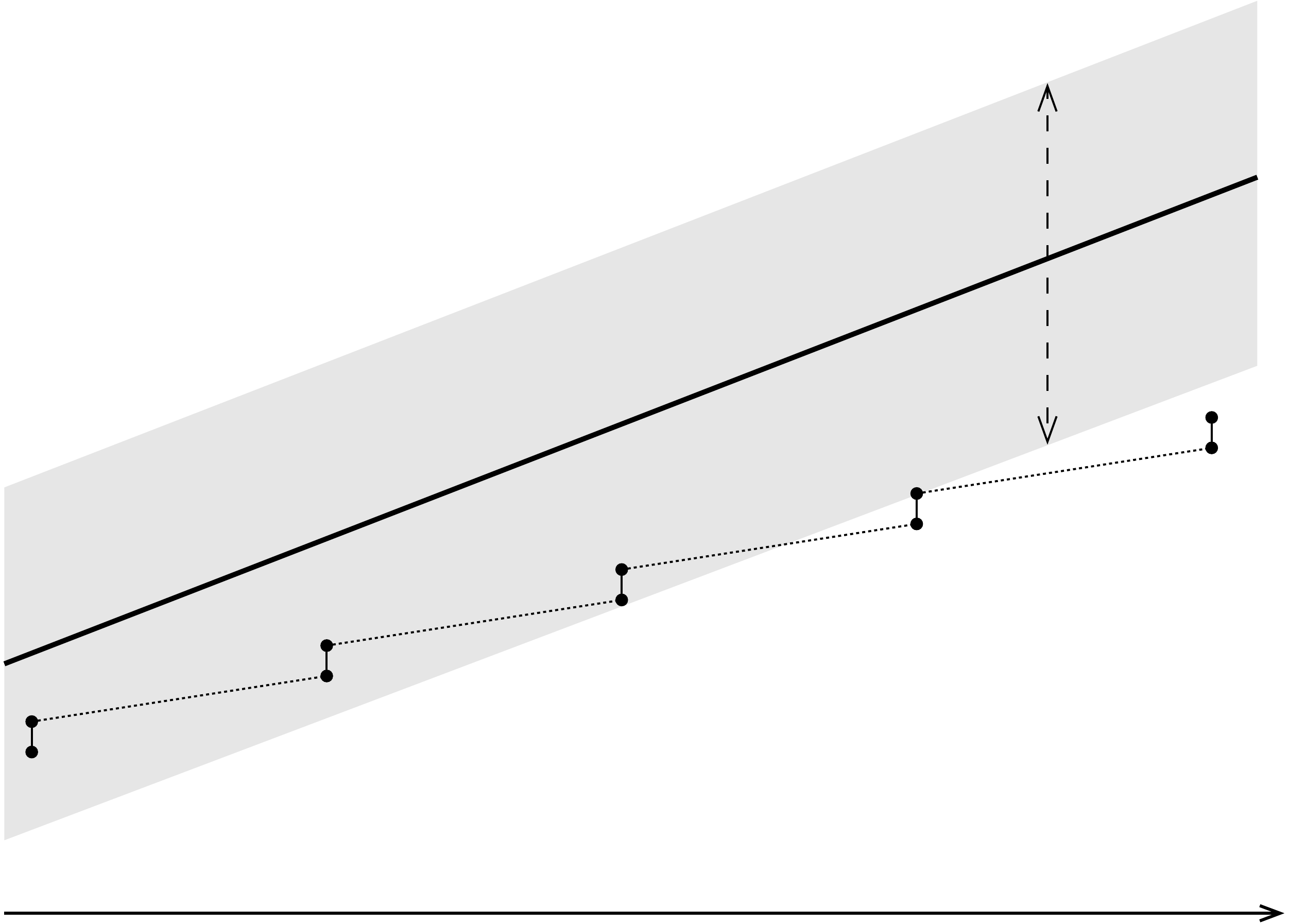}
 \put(97,-2.5){$n$}
 \put(43,40.5){$f(n)$}
 \put(66,56.5){$f(n)\pm\Delta(n)$}
 \put(22,17){$s_n$}
 \put(22,22.5){$t_n$}
 \put(44.2,23){$s_{n'}$}
 \put(44.5,28.5){$t_{n'}$}
 \put(35.8,17){$\alpha r$}
 \put(52,22.4){$\le r$}
 \put(26.5,17.5){\makebox(21.5,5){\upbracefill}}
 \put(49.5,22.4){{\Large $\rbrace$}}
\end{overpic}
\end{center}
\caption{\label{fig:intervals} Illustration of the basic strategy. We know $\chi(\Gnp)$ is concentrated in the (wide) grey band around a function $f(n)$ with slope more than $1/\alpha$. A coupling argument shows that for suitable $r$ it is likely that $\chi(G_{n',p})\le \chi(G_{n,p})+r$, where $n'=n+\alpha r$ (dotted lines with slope $1/\alpha$). If the concentration intervals $[s_n,t_n]$ are too short, a contradiction results.}
\end{figure}

Before turning to the details, we outline the method, which is in principle simple but involves significant calculation. Throughout we fix $0<p<1$. There will be two key ingredients. First, we take as an input a suitable result establishing whp \emph{concentration} of $\chi(\Gnp)$ on some explicit interval $f(n)\pm \Delta(n)$. Here the interval length $2\Delta(n)$ will be much larger than the scale on which we are aiming to establish non-concentration. It will be essential that, interpolating $f(n)$ to non-integer values, over the range of $n$ that we consider we have $\dfdn>1/\alpha$,
where $\alpha=\alpha(n)=\floor{\alpha_0(n)}$ as before, and we consider a range of values for $n$ such that $\alpha$ does not change in this `window'. More specifically, we will suppose that
\[
 \dfdn \ge \frac{1}{\alpha} + \delta
\]
for some $\delta>0$.

The second key ingredient is a simple coupling result, Lemma~\ref{lemma:coupling} below, and in particular its consequence, Corollary~\ref{corollary:coupling}, which states, slightly informally, that for $r$ not too large we may couple the random graphs $G_n=G_{n,p}$ and $G_{n'}=G_{n+\alpha r,p}$ so that with significant probability (say $> 0.4$, though we could write $> 0.99$ by changing the constants) we have $\chi(G_{n'})\le \chi(G_{n})+r$. Here we can take $r$ up to roughly $\sqrt{\mu_\alpha(n)}$, the standard deviation of the number $X_\alpha$ of $\alpha$-sets. The intuition behind this is, roughly speaking, that because the number of $\alpha$-sets varies by at least $r$, planting $r$ extra ones does not affect the distribution of our graph too much. Planting these sets in $G_{n'}$, $n'=n+\alpha r$, we can view the graph on the remaining vertices as $G_n$, giving the coupling.

Suppose for the moment that $\chi(G_n)=\chi(G_{n,p})$ were in fact deterministic, equal to some function $f_0(n)$. Then the coupling just described would show that $f_0(n')\le f_0(n)+r$, i.e., (essentially) that the function $f_0(n)$ has slope $f_0'(n)$ at most $1/\alpha$. This would lead to a contradiction, considering a suitably large range of values of $n$. Indeed, by our first (concentration) assumption, $|f_0(n)-f(n)|\le \Delta(n)$. But (ignoring the variation of $\Delta(n)$ over the relevant window), a line $f_0(n)$ with slope $1/\alpha$ cannot stay this close to a curve $f(n)$ with slope at least $1/\alpha+\delta$ for more than roughly $2\Delta/\delta$ consecutive values of $n$.

Of course, $\chi(G_n)$ is not deterministic. But in proving our result, we may assume that it is \emph{almost} deterministic: for each $n$, we may assume that $\chi(G_n)$ is concentrated on some interval $[s_n,t_n]$ of length $\ell_n$. With $n'=n+\alpha r$ for $r$ not too large, as before, our coupling implies that $s_{n'}\le t_n+r$, since it is reasonably likely that all inequalities in the chain $s_{n'}\le \chi(G_{n'})\le \chi(G_n)+r\le t_n+r$ hold. In turn, this gives
\[
 s_{n'}\le s_n + \ell_n + r.
\]

Can we still get a contradiction? Yes, if the numbers work out correctly. Defining $f_0(n)=s_n$, we see that between $n$ and $n'=n+\alpha r$ this function has slope
\[
 \frac{s_{n'}-s_n}{\alpha r} \le \frac{1}{\alpha}+\frac{\ell_n}{\alpha r},
\]
so if $\ell_n/(\alpha r)\le \delta/2$, say, we will get a contradiction much as before. Hence there must be some $n$ such that $\ell_n > \alpha r\delta/2$. Note that to obtain a strong non-concentration result, we wish to take $r$ as large as possible.

\subsection{The framework lemma}\label{ss:framework}

In this subsection we formalize the outline above in the following lemma. We have replaced various $o(1)$ bounds here by concrete bounds for definiteness, though in the application we mostly start with $o(1)$ bounds and take $n$ large enough. In the application we will take $a=\alpha(n^-)=\alpha(n^+)$, where $\alpha(n)$ is defined in \eqref{alphadef} and $[n^-,n^+]$ is the range of values of $n$ we consider. Thus $a$ will be the typical independence number of the relevant graphs $\Gnp$.

\begin{lemma}\label{lem:framework}
Let $p$, $\delta$ and $\Delta$ be positive real numbers with $p<1$, and let $n^-<n^+$ and $a$ be positive integers. Let $I=[n^-,n^+]$. Suppose that the following hold. Firstly, there is some function $f(n)$ such that for each (integer) $n\in I$ we have
\begin{equation}\label{fra1}
 \Pr\bb{ \chi(G_{n,p})  \in [f(n)-\Delta,f(n)+\Delta] } \ge 0.99.
\end{equation}
Secondly, for all (real) $n\in I$ we have
\begin{equation}\label{fra2}
 \dfdn \ge \frac{1}{a} + \delta.
\end{equation}
Thirdly, for each $n\in I$ we have
\begin{equation}\label{fra3}
 \Pr\bb{ \chi(G_{n,p}) \in [s_n,t_n] } \ge 0.9
\end{equation}
for some integers $s_n$, $t_n$.
Fourthly, there is an increasing integer-valued function $r(n)$ such that
for each $n\in I$ we have a coupling of $G_{n,p}$ and $G_{n+ar(n),p}$ such that 
\begin{equation}\label{fra4}
 \Pr\bb{ \chi(G_{n+ar(n),p}) \le \chi(G_{n,p}) +r(n)} \ge 0.4.
\end{equation}
Finally, suppose also that
\begin{equation}\label{fra5}
 n^+-n^- \ge 5\Delta/\delta, \text{\quad and\quad} n^+-n^-\ge 5ar(n^+).
\end{equation}
Then there is some integer $n\in I$ for which $t_n-s_n > \frac{a\delta r(n)}{2}$.
\end{lemma}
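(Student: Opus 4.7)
My plan is to suppose that $t_n - s_n \leq a\delta r(n)/2$ holds for every integer $n\in I$ and derive a contradiction with \eqref{fra2} by chaining together coupling steps.

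First, define an iterated sequence $n_0 := n^-$ and $n_{i+1} := n_i + ar(n_i)$, halting at the largest index $k$ with $n_k \leq n^+$. Since $r(n)$ is a positive integer this terminates, and $n_{k+1} > n^+$ gives $n_k - n^- \geq (n^+-n^-) - ar(n^+)$; the second half of \eqref{fra5} ensures this quantity is comparable to $n^+-n^-$.

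The key step converts the coupling \eqref{fra4} into a deterministic inequality. For each $i<k$, on the coupled probability space of $G_{n_i,p}$ and $G_{n_{i+1},p}$, the three events $\{\chi(G_{n_i,p})\leq t_{n_i}\}$, $\{\chi(G_{n_{i+1},p})\geq s_{n_{i+1}}\}$ and $\{\chi(G_{n_{i+1},p})\leq \chi(G_{n_i,p})+r(n_i)\}$ have probabilities at least $0.9,\ 0.9,\ 0.4$ by \eqref{fra3}--\eqref{fra4}, so their complements sum to $\leq 0.8<1$ and they jointly hold on some sample point, forcing $s_{n_{i+1}} \leq t_{n_i} + r(n_i)$. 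Plugging in the contradiction hypothesis at $n_i$ and telescoping over $i=0,\dots,k-1$, using $\sum_i ar(n_i) = n_k - n_0$, yields
\[s_{n_k} - s_{n_0} \leq \bb{\tfrac{1}{a}+\tfrac{\delta}{2}}(n_k-n_0).\]

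For the opposing lower bound, the events of \eqref{fra1} and \eqref{fra3} at the same $n$ have joint probability $\geq 0.89$, so the intervals $[s_n,t_n]$ and $[f(n)-\Delta, f(n)+\Delta]$ must overlap, giving $s_{n_0}\leq f(n_0)+\Delta$ and $t_{n_k}\geq f(n_k)-\Delta$. Combined with the contradiction hypothesis at $n_k$ we get $s_{n_k}\geq t_{n_k} - \tfrac{a\delta r(n_k)}{2} \geq f(n_k)-\Delta - \tfrac{a\delta r(n_k)}{2}$, and then integrating \eqref{fra2} gives $f(n_k)-f(n_0)\geq (1/a+\delta)(n_k-n_0)$, so
\[s_{n_k}-s_{n_0}\geq \bb{\tfrac{1}{a}+\delta}(n_k-n_0) - 2\Delta - \tfrac{a\delta r(n_k)}{2}.\]

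Subtracting the two bounds on $s_{n_k}-s_{n_0}$ cancels the slope-$1/a$ contributions and leaves $\tfrac{\delta}{2}(n_k-n_0) \leq 2\Delta + \tfrac{a\delta r(n_k)}{2}$. Substituting the lower bound on $n_k - n_0$ and using $r(n_k)\leq r(n^+)$ rearranges to $n^+-n^- \leq 4\Delta/\delta + 2ar(n^+)$, which is inconsistent with \eqref{fra5}. The main obstacle is the bookkeeping in this final algebraic step: the bound to be contradicted is a \emph{sum} of $\Delta/\delta$ and $ar(n^+)$, whereas \eqref{fra5} only directly controls the maximum, and the margin with the explicit constant $5$ is tight; closing it may require exploiting that $t_n-s_n$ and $n_k$ are integer-valued, which lets one upgrade non-strict inequalities (such as $n_k \geq n^+ - ar(n_k) + 1$ coming from $n_{k+1} > n^+$) to give the slack needed. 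Everything else in the argument is a chain of one-line estimates.
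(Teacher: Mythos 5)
Your argument is the same iterated-coupling/telescoping strategy as the paper's, and it is correct up to the last step; but the final step, which you yourself flag as delicate, is a genuine gap that cannot be closed the way you suggest. From $\tfrac{\delta}{2}(n_k-n_0)\le 2\Delta+\tfrac{a\delta r(n_k)}{2}$, together with $n_k-n_0> (n^+-n^-)-ar(n^+)$ and $r(n_k)\le r(n^+)$, you obtain $n^+-n^- < 4\Delta/\delta + 2ar(n^+)$. Under \eqref{fra5} this is \emph{not} a contradiction: the two hypotheses give $4\Delta/\delta\le \tfrac45(n^+-n^-)$ and $2ar(n^+)\le\tfrac25(n^+-n^-)$, whose sum is $\tfrac65(n^+-n^-)$. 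Concretely, if $n^+-n^-=5\Delta/\delta=5ar(n^+)$, your final inequality only asserts $n_k-n_0\le 4\Delta/\delta+ar(n_k)\le n^+-n^-$, which is always true. Integrality of $t_n-s_n$ and of $n_k$ cannot rescue this: the shortfall is of order $\delta(n^+-n^-)$, i.e.\ comparable to $\Delta$, not $O(1)$, so upgrading non-strict to strict inequalities gains nothing of the right magnitude.

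The source of the loss is that you invoke the contradiction hypothesis a second time, at $n_k$, to convert the lower bound $t_{n_k}\ge f(n_k)-\Delta$ into a lower bound on $s_{n_k}$; this costs the extra term $a\delta r(n_k)/2$ which the constant $5$ in \eqref{fra5} is not generous enough to absorb. The paper avoids this by truncating the intervals at the outset: set $\ts_n=\max\{s_n,f(n)-\Delta\}$ and $\ttt_n=\min\{t_n,f(n)+\Delta\}$. Since \eqref{fra1} and \eqref{fra3} together give $\Pr\bb{\chi(G_{n,p})\in[\ts_n,\ttt_n]}\ge 0.89$, your union-bound/coupling step goes through verbatim with these endpoints (failure probabilities $0.11+0.11+0.6<1$), yielding the deterministic chain $\ts_{n'}\le \ttt_n+r(n)\le \ts_n+(n'-n)\bb{\tfrac1a+\tfrac\delta2}$, while $|\ts_n-f(n)|\le\Delta$ holds automatically at \emph{both} ends (lower bound by the definition of the maximum, upper bound by non-emptiness of $[\ts_n,\ttt_n]$). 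Telescoping and comparing with $f(n_k)-f(n_0)\ge(n_k-n_0)(\tfrac1a+\delta)$ then gives $f(n_k)-\ts_{n_k}-(f(n_0)-\ts_{n_0})\ge \tfrac\delta2(n_k-n_0)>2\Delta$ with no extra $r$-term, contradicting $|f(n)-\ts_n|\le\Delta$. If you incorporate this truncation, the rest of your write-up needs no change.
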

\begin{proof}
We follow the plan described in the previous section, with the minor complication that we allow $r$ to vary with $n$; this is not essential, but gives stronger results in some applications. Throughout we write $G_n$ for $G_{n,p}$. 

Firstly, for $n\in I$ define
$\ts_n=\max\{s_n,f(n)-\Delta\}$ and $\ttt_n=\min\{t_n,f(n)+\Delta\}$. Then by \eqref{fra1} and \eqref{fra3}
we have
\begin{equation}\label{tstt}
 \Pr\bb{ \chi(G_n) \in [\ts_n,\ttt_n] } \ge 0.89,
\end{equation}
and in particular this interval is non-empty, which implies that
\begin{equation}\label{tsf}
 |\ts_n-f(n)| \le \Delta.
\end{equation}
Let us suppose for a contradiction that for every $n\in I$ we have
\[
 \ell_n := t_n-s_n \le \frac{a\delta r(n)}{2},
\]
and note for later that $\ttt_n-\ts_n\le \ell_n$.
Now, for any $n\in I$ such that $n'=n+ar(n)\in I$, by \eqref{fra4} and \eqref{tstt} (applied twice), with probability at least $0.4-2\times 0.11>0$ all three inequalities $\chi(G_{n'})\ge \ts_{n'}$, $\chi(G_n)\le \ttt_n$ and $\chi(G_{n'})\le \chi(G_n)+r(n)$ hold. Hence, with positive probability
\[
 \ts_{n'}\le \chi(G_{n'}) \le \chi(G_n) + r(n) \le \ttt_n+r(n),
\]
and in particular $\ts_{n'}\le \ttt_n+r(n)$. Since this is a deterministic statement, it always holds.
Thus, recalling that $\ttt_n-\ts_n\le \ell_n$, we have
\begin{equation}\label{tsnar}
 \ts_{n'} \le \ts_n + \ell_n +r(n) \le \ts_n + r(n) + \frac{a\delta r(n)}{2} = \ts_n + (n'-n)\left(\frac{1}{a}+\frac{\delta}{2}\right).
\end{equation}

Finally, define a sequence $(n_i)_{0\le i\le j}$ as follows: let $n_0=n^-$ and, given $n_i$,
let $n_{i+1}=n_i+ar(n_i)$ unless this value exceeds $n^+$, in which case we set $j=i$ and stop.
Note that by the stopping condition and the monotonicity of $r$,
\begin{equation}\label{nstop}
 n_j > n^+ - a r(n_j) \ge n^+ - a r(n^+).
\end{equation}
Applying \eqref{tsnar} with $n=n_i$ (and so $n'=n_{i+1}$) for $0\le i<j$ and telescoping, we see that
\[
 \ts_{n_j} - \ts_{n_0} \le (n_j-n_0) \left(\tfrac{1}{a}+\tfrac{\delta}{2}\right).
\]
On the other hand, from \eqref{fra2}, $f(n_j)-f(n_0) \ge (n_j-n_0) \left(\tfrac{1}{a}+\delta\right)$,
so writing $h(n)=f(n)-\ts_n$ we have
\[
 h(n_j) - h(n_0) \ge (n_j-n_0)\delta/2.
\]
From \eqref{nstop} we have $n_j-n_0> n^+-n^--ar(n^+)$. Hence, by \eqref{fra5},
we have $n_j-n_0 > (4/5)(n^+-n^-) \ge 4\Delta/\delta$. Thus $h(n_j)-h(n_0)> 2\Delta$,
which contradicts \eqref{tsf}.
\end{proof}


\subsection{The coupling argument}\label{ss:coupling}

In this section we present the coupling lemma we shall use. We state it somewhat more generally than needed here; in the application we will take $a=\alpha(n)$ (the typical independence number of $G_{n,p}$).

\begin{lemma} \label{lemma:coupling} 
Let $p \in (0,1)$ be constant, let $b=1/(1-p)$, and let $a=a(n)$ satisfy $ 1.01 \log_b n \le a \le 100 \log_b n$ and $\mu \le n^{1.99}$,
where $\mu=\mu_a(n)=\binom{n}{a}(1-p)^{\binom{a}{2}}$.
Then there is a coupling of the random graphs $G_n=G_{n,p}$ and $G_{n-a}=G_{n-a,p}$
with the property that
\[
 \Pr\bb{ \chi(G_n)\le \chi(G_{n-a})+1 } \ge 1- \frac{1+o(1)}{2\sqrt{\mu}}.
\]
\end{lemma}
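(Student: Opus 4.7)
The plan is to realise the coupling by a \emph{size-biased swap}. Sample $G_n \sim G_{n,p}$; provided $X_a(G_n)\ge 1$, pick a uniformly random independent $a$-set $I$ of $G_n$ and a uniform bijection $\pi\colon[n]\to[n]$ with $\pi(I)=V':=\{n-a+1,\dots,n\}$. Writing $G_n':=\pi\cdot G_n$, the set $V'$ is independent in $G_n'$, so colouring $G_n'[[n-a]]$ optimally and putting $V'$ in one new colour class gives $\chi(G_n)=\chi(G_n')\le\chi(G_n'[[n-a]])+1$. It suffices to bound $\dtv\bigl(\mathcal L(G_n'[[n-a]]),\mathcal L(G_{n-a,p})\bigr)$ by $(1+o(1))/(2\sqrt{\mu})$: by the standard variational characterisation of total variation we may then couple $G_n'[[n-a]]$ with $G_{n-a}\sim G_{n-a,p}$ so that they agree with probability $\ge 1-\dtv$, giving $\chi(G_n)\le\chi(G_{n-a})+1$ on this event intersected with $\{X_a(G_n)\ge 1\}$. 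The latter event has probability $1-e^{-(1+o(1))\mu}=1-o(1/\sqrt{\mu})$ in the nontrivial range by Janson's inequality.

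Summing over the uniform choices of $I$ and $\pi$ and using invariance of $G_{n,p}$ and $X_a$ under relabelling, for any $G^*$ on $[n]$ with $V'$ independent one computes
\[
 \Pr(G_n'=G^*) \;=\; \binom{n}{a}\frac{\Pr(G_{n,p}=G^*)}{X_a(G^*)} \;=\; \frac{\mu}{X_a(G^*)}\,\Pr\bigl(G_{n,p}=G^*\bigm|V'\text{ indep}\bigr),
\]
using $\mu=\binom{n}{a}q^{\binom{a}{2}}$ and $\Pr(V'\text{ indep in }G_{n,p})=q^{\binom{a}{2}}$. Under the conditional measure on the right, $G^*[[n-a]]\sim G_{n-a,p}$ independently of the edges from $V'$ to $[n-a]$, so marginalising shows that the Radon--Nikodym derivative of $\mathcal L(G_n'[[n-a]])$ with respect to $\mathcal L(G_{n-a,p})$ at a graph $F$ equals $\E[\mu/X_a(G^*)\mid G^*[[n-a]]=F,V'\text{ indep}]$.

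Consequently, by Jensen and the tower property,
\[
 \dtv \;\le\; \tfrac12\,\E_{G^*\mid V'\text{ indep}}\bigl|\mu/X_a(G^*)-1\bigr|.
\]
Since this Radon--Nikodym derivative has conditional mean $1$, the variable $\mu/X_a-1$ is mean-zero; writing $\E|Y|=2\E Y_+$ for mean-zero $Y$ and using the pointwise bound $(1-\mu/X_a)_+\le(X_a-\mu)_+/\mu$ gives
\[
 \E\bigl|\mu/X_a-1\bigr| \;\le\; \tfrac{2}{\mu}\,\E(X_a-\mu)_+ \;=\; \tfrac{1}{\mu}\,\E|X_a-\mu| \;\le\; \tfrac{1}{\mu}\sqrt{\Var X_a}\,,
\]
by Cauchy--Schwarz together with the short direct computation that $\E X_a=(1+o(\mu^{-1/2}))\mu$ under the conditioning (so the positive and negative parts of $X_a-\mu$ have essentially equal expectation). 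The classical pair-counting estimate for the number of independent $a$-sets gives $\Var X_a=(1+o(1))\mu$ in the regime $a\le 100\log_b n$, $\mu\le n^{1.99}$, and substituting yields $\dtv\le(1+o(1))/(2\sqrt{\mu})$ as required.

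The principal technical obstacle is this variance estimate at the upper end $\mu\approx n^{1.99}$: one must verify that the contribution to $\E X_a^2$ from pairs of $a$-sets sharing $k\ge 2$ common vertices is $o(\mu^2)$, which requires the combinatorial savings in $\binom{a}{k}\binom{n-a}{a-k}/\binom{n}{a}$ to beat the factor $q^{-\binom{k}{2}}$ for every $2\le k\le a-1$. This is the classical calculation underpinning the Poisson limit theorem for the number of large independent sets in $G_{n,p}$, and the numerical hypothesis $\mu\le n^{1.99}$ is precisely what makes it go through.
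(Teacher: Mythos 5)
Your construction is the mirror image of the paper's argument, built on the same size-biasing identity. The paper plants a uniform independent $a$-set into an otherwise $p$-random graph to get a graph $P_n$, observes that $P_n$ restricted to the complement of the plant is \emph{exactly} $G_{n-a,p}$ (so $\chi(P_n)\le \chi(G_{n-a,p})+1$ deterministically), and then couples $P_n$ with $G_n$ using $\Pr(P_n=H)=\frac{X_a(H)}{\mu}\Pr(G_n=H)$, so the total variation distance is $\tfrac12\E\bigl|X_a/\mu-1\bigr|\le \sqrt{\Var X_a}/(2\mu)$ by Cauchy--Schwarz with the \emph{unconditional} variance. You run the identity in the opposite direction: extract a uniform independent $a$-set from $G_n$ itself (when one exists) and compare the remaining graph with $G_{n-a,p}$, so the relevant density is $\mu/X_a$ rather than $X_a/\mu$. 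Your identity $\Pr(G_n'=G^*)=\binom{n}{a}\Pr(G_{n,p}=G^*)/X_a(G^*)$ and the Radon--Nikodym computation are correct, and the route can be made to work, but the reversal costs you extra ingredients (Janson for $X_a\ge 1$, the positive-part trick to avoid inverse moments), and it is exactly there that the write-up has a genuine gap.

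The gap is a measure mismatch in the quantitative step. After conditioning on $V'$ being independent, your chain needs $\E\bigl[(X_a-\mu)^2\mid V'\text{ indep}\bigr]=(1+o(1))\mu$ and $\E[X_a\mid V'\text{ indep}]=\mu+o(\sqrt\mu)$, i.e.\ moments under the \emph{planted} law, not the classical pair-counting estimate $\Var X_a=(1+o(1))\mu$ for unconditioned $G_{n,p}$ that you invoke (and which your final paragraph identifies as the crux). The conditioning shifts $\E X_a$ upwards by $1+O(\mu a^2/n)$ (the planted set itself, plus sets meeting it) and adds corresponding terms to the second moment; under the stated hypotheses these turn out to be harmless, but they are not covered by the computation you cite, and without them you do not get the factor $\tfrac12$ — bounding $\E[(X_a-\mu)_+\mid\cdot]$ crudely by a conditional standard deviation alone would lose a factor $2$, so the near-equality of positive and negative parts is essential and must be proved in the planted measure. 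A secondary point: your error terms $\Pr(X_a=0)$ and the $+1$ mean shift are only $o(1/\sqrt\mu)$ and $o(\sqrt\mu)$ when $\mu\to\infty$, whereas the lemma as stated claims the constant $\tfrac12$ for all $\mu\le n^{1.99}$, including bounded $\mu$ (where it is non-vacuous once $\mu>1/4$); there your argument yields a strictly worse constant, while the paper's one-step coupling of $P_n$ with $G_n$ is uniform in $\mu$. Both issues are repairable, but as written the key estimate is asserted for the wrong measure.
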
 
\begin{proof}
Let $U$ be a uniform random subset of $V=[n]$ of size $a$. Given $U$, let $P_n$ be the random graph on $V$ with no edges inside $U$, in which each of the other $\binom{n}{2}-\binom{a}{2}$ possible edges is present independently with probability $p$. 
Thus $P_n$ is $G_n$ with a random independent $a$-set `planted'. From the definition, we may realise $G_{n-a}$ as $P_n[V\setminus U]$.
Furthermore, since $U$ is an independent set in $P_n$, we have
\[
\chi(P_n) \le \chi(G_{n-a}) +1.
\]
It remains only to show that we can couple the distributions of $P_n$ and $G_n$ to agree with
sufficiently high probability.

The key observation is that $P_n$ has the distribution of $G_n$ `size-biased' by the number
$X_a$ of independent $a$-sets. To see this, let $H$ be any graph on $[n]$, let $q_H=\Pr(P_n=H)$, and let $X_a(H)$ be the number of independent $a$-sets in $H$. 
For $P_n=H$ to hold, our random set $U$ must be independent in $H$, which has probability $X_a(H)/\binom{n}{a}$. Given such a choice of $U$, exactly the right edges outside $U$ must be present. Hence
\begin{equation}\label{qpH}
 q_H = X_a(H)\binom{n}{a}^{-1} p^{e(H)}(1-p)^{\binom{n}{2}-\binom{a}{2}-e(H)}
 = \frac{X_a(H)}{\mu} p^{e(H)}(1-p)^{e(H^\cc)} = \frac{X_a(H)}{\mu} p_H,
\end{equation}
where $p_H = \Pr(G_n=H)$.

Let $\tau$ be the total variation distance between the distributions of $P_n$ and of $G_n$.
Then
\[
 2\tau := \sum_H |q_H-p_H| =\sum_H \left|\frac{X_a}{\mu}-1 \right|p_H = 
 \E\left[\frac{|X_a-\mu|}{\mu}\right],
\]
where the expectation refers to the random graph $G_n$. Thus by Jensen's inequality (or by Cauchy--Schwarz),
\[
 4\tau^2 \le  \mu^{-2}\E[ (X_a(G_n)-\mu)^2] = \mu^{-2}\Var[X_a(G_n)].
\]
Writing $\Var[X_a(G_n)]$ as a sum (of covariances of indicator functions) over pairs $U_1$, $U_2$ 
of $a$-sets in $V$, the contribution from $U_1=U_2$ is at most $\mu$, while by a standard
exercise the contribution from the remaining terms is $O(\mu^2a^4/n^2 + \mu a n(1-p)^{a-1})$,
with the two terms corresponding to $U_1$ and $U_2$ intersecting in $2$ or $a-1$ vertices, respectively.
Under our assumptions $\Var[X_a(G_n)]\sim \mu$, so $\tau \lesssim 1 / (2 \sqrt{\mu})$.
Since $G_n$ and $P_n$ can be coupled to agree with probability $1-\tau$, this completes the proof.
\end{proof}

\begin{remark}
It is perhaps interesting that the proof of our coupling lemma relies on a variance bound, i.e., an upper bound on how much $X_a(G_n)$ varies. In the end, we use the lemma to show, roughly speaking, that $\chi(G_n)$ varies \emph{at least} a certain amount, because $X_a(G_n)$ does.
\end{remark}

\begin{corollary}
\label{corollary:coupling} 
Let $p \in (0,1)$ be constant, let $b=1/(1-p)$ and let $1\le a=a(n)\le n$ satisfy $ 1.02 \log_b n \le a \le 99 \log_b n$ and $\mu \le n^{1.98}$,
where $\mu=\mu_a(n)=\binom{n}{a}(1-p)^{\binom{a}{2}}$. Let $r \le \sqrt{\mu}$ be an integer.
Then if $n$ is large enough, there is a coupling of the random graphs $G_n=G_{n,p}$ and $G_{n+ar}=G_{n+ar,p}$
with the property that
\[
 \Pr\bb{ \chi(G_{n+ar})\le \chi(G_{n})+r } > 0.4.
\]
\end{corollary}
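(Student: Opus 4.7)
The plan is to iterate Lemma~\ref{lemma:coupling} a total of $r$ times, building up a joint coupling of $G_n, G_{n+a}, G_{n+2a}, \ldots, G_{n+ar}$ in which $\chi(G_{n+ai}) \le \chi(G_{n+a(i-1)})+1$ for every $i$. On this event the telescoping chain yields $\chi(G_{n+ar}) \le \chi(G_n)+r$, which is the desired inequality.

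First I would check that the hypotheses of Lemma~\ref{lemma:coupling} hold \emph{uniformly} for every $n'$ in the window $[n,n+ar]$. Since $r\le \sqrt{\mu}\le n^{0.99}$ and $a \le 99\log_b n$, we have $ar = o(n)$, and hence $\log_b n' = \log_b n + o(1)$ throughout the window. The slight slack in the assumptions (the corollary assumes $1.02\log_b n \le a \le 99 \log_b n$ and $\mu\le n^{1.98}$, while the lemma only needs $1.01\log_b n' \le a \le 100\log_b n'$ and $\mu_a(n')\le (n')^{1.99}$) is there precisely so the shifted values still fit: writing
\[
 \frac{\mu_a(n+ar)}{\mu_a(n)}=\prod_{j=0}^{a-1}\frac{n+ar-j}{n-j}\le \left(1+\frac{ar}{n-a}\right)^a \le \exp\!\bb{O(a^2r/n)}= 1+o(1),
\]
we see that $\mu_a(n')=(1+o(1))\mu_a(n)$ across the window, so both the upper bound on $\mu_a$ and the window constraints on $a$ transfer to every $n'$ considered.

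Next, for each $i=1,\ldots,r$ the lemma supplies a coupling of $G_{n+a(i-1)}$ and $G_{n+ai}$ in which $\chi(G_{n+ai})\le \chi(G_{n+a(i-1)})+1$ fails with probability at most $\eps_i :=(1+o(1))/(2\sqrt{\mu_a(n+ai)})$. I would then apply the standard gluing lemma for couplings (given couplings of $(X,Y)$ and $(Y',Z)$ with $Y\overset{d}{=}Y'$ one can realise a joint coupling of $(X,Y,Z)$ with the correct pairwise marginals) iteratively to stitch these into a single coupling of the whole chain $G_n,G_{n+a},\ldots,G_{n+ar}$. A union bound gives a failure probability of at most
\[
 \sum_{i=1}^r \eps_i \le \frac{r(1+o(1))}{2\sqrt{\mu_a(n)}} \le \frac{(1+o(1))\sqrt{\mu}}{2\sqrt{\mu_a(n)}} = \frac{1+o(1)}{2},
\]
where we used the monotonicity $\mu_a(n+ai)\ge \mu_a(n)$ and $r\le\sqrt{\mu}=\sqrt{\mu_a(n)}$. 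For $n$ large this is less than $0.6$, so the chain of inequalities holds with probability greater than $0.4$, giving the coupling required.

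There is no serious obstacle: the only thing to watch is that the parameters $a$ and $\mu_a$ do not drift out of the lemma's range during the $r$ iterations, which is handled by the small slack built into the hypotheses of the corollary as noted above.
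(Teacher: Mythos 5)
Your proposal is correct and follows essentially the same route as the paper: verify that the hypotheses of Lemma~\ref{lemma:coupling} hold at every intermediate value $n+ai$ (using the slack between the corollary's and the lemma's assumptions), glue the $r$ pairwise couplings into one joint coupling, and apply a union bound of the form $r\cdot(1+o(1))/(2\sqrt{\mu})\le (1+o(1))/2$ to get the chain of inequalities with probability greater than $0.4$.
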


\begin{proof}
 For $i= 0, \dots, r$,  let $n_i=n+ai$, and let $\mu_i = \binom{n_i}{a} (1-p)^{\binom{a}{2}}$. In this notation, $n_0=n$, $n_r=n+ar$ and $\mu_0=\mu$. Since $r \le \sqrt{\mu} \le n^{0.99}$ and $a \le 99 \log_b n$, if $n$ is large enough, for all $0 \le i \le r$,
 \[n \le n_i \le n +n^{0.999}.\]
In particular, $\log n_i = \log n + O(n^{-0.001})$, so $1.01 \log_b n_i \le a \le 100 \log_b n_i$ if $n$ is large enough. Furthermore, if $n$ is large enough,
\[
 \mu_i = \mu_0  \left(1+O\left( \frac{ra}{n} \right)\right)^a =  \mu_0 \left(1+O\left( \frac{ra^2}{n} \right)\right) \le n^{1.99} \le n_i^{1.99}.
\]
So we may apply Lemma~\ref{lemma:coupling} to show that, for every $i\in \{1, \dots, r\}$, there is a coupling of the random graphs $G_{n_{i}}$ and $G_{n_{i-1}}$ such that
\begin{equation}\label{eq:couplingi}
 \Pr\bb{ \chi(G_{n_{i}})\le \chi(G_{n_{i-1}})+1 } \ge 1-\frac{1+o(1)}{2\sqrt{\mu_{i}}} \ge 1- \frac{1+o(1)}{2\sqrt{\mu}} .
\end{equation}
The Gluing Lemma (which is trivial in this finite setting\footnote{Given couplings of $X$ and $Y$ and of $Y$ and $Z$, i.e., desired distributions for $(X,Y)$ and for $(Y,Z)$, construct $(X,Y,Z)$ by starting with $Y$ and, given the value of $Y$, taking the appropriate conditional distributions for $X$ and for $Z$ -- for example with conditional independence.}) implies that there is a joint coupling of the random graphs $G_{n_0}, \dots, G_{n_r}$ so that \eqref{eq:couplingi} holds for every $1 \le i \le r$. In this coupling, with probability at least
\[
 1-(1+o(1))\frac{r}{2\sqrt{\mu}} \ge 1-\frac{1+o(1)}{2} > 0.4
\]
we have
$\chi(G_{n+ar})\le \chi(G_{n})+r$.
\end{proof}

\subsection{Preliminaries for Theorem~\ref{theorem:nstar}}\label{ss:prelim}
Fix $p\le 1-1/e^2$, and let
\begin{equation}\label{eq:deff}
  f(n)= f_p(n) := \frac{n}{2 \log_b n - 2 \log_b \log_b n -2 \log_b 2}
 \end{equation}
be the estimate for $\chi(\Gnp)$ given in Theorem~\ref{theorem:bounds}. Note that
\[
 f(n) = \frac{n}{\alpha_0(n)-1 -\frac{2}{\log b}}, 
\]
where $\alpha_0(n)$ was defined in \eqref{eq:a0def}. 
\begin{lemma}\label{lemma:derivative}
Treating $\alpha_0$ and $f$ as functions of a real-valued input $n$, we have
\[\frac{\mathrm{d}f}{\mathrm{d}n} = \frac{1}{\alpha_0(n)}+\frac{1}{\alpha_0(n)^2} + O\left( \frac{1}{\log^3 n}\right).\]
\end{lemma}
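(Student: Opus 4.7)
The plan is a direct calculus computation, using the quotient rule together with a Taylor expansion to track the relevant terms. Set $D(n) := \alpha_0(n) - 1 - 2/\log b$ so that $f(n) = n/D(n)$. The first step is to differentiate $\alpha_0$: since $\log_b n = \log n/\log b$ and $\log_b\log_b n = (\log\log n - \log\log b)/\log b$, we get
\[
 \alpha_0'(n) = \frac{2}{n \log b} - \frac{2}{n\log b \cdot \log n} = \frac{2}{n\log b}\left(1 - \frac{1}{\log n}\right),
\]
and hence $D'(n) = \alpha_0'(n)$.

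Applying the quotient rule gives
\[
 f'(n) = \frac{1}{D(n)} - \frac{n\alpha_0'(n)}{D(n)^2} = \frac{1}{D(n)} - \frac{2}{D(n)^2\log b}\left(1 - \frac{1}{\log n}\right).
\]
The next step is to re-expand everything in terms of $\alpha_0$. Since $\alpha_0(n) = \Theta(\log n)$ and $D = \alpha_0 - (1 + 2/\log b)$, a geometric series expansion yields
\[
 \frac{1}{D} = \frac{1}{\alpha_0}\left(1 + \frac{1 + 2/\log b}{\alpha_0} + O(1/\alpha_0^2)\right) = \frac{1}{\alpha_0} + \frac{1}{\alpha_0^2} + \frac{2/\log b}{\alpha_0^2} + O(1/\log^3 n),
\]
and similarly $1/D^2 = 1/\alpha_0^2 + O(1/\log^3 n)$. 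Multiplying the latter by $2/\log b$ and noting that the factor $(1 - 1/\log n)$ only contributes an extra $O(1/\log^3 n)$ error, the second term in $f'(n)$ becomes $\tfrac{2/\log b}{\alpha_0^2} + O(1/\log^3 n)$.

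Substituting back, the awkward $\tfrac{2/\log b}{\alpha_0^2}$ terms cancel exactly, leaving
\[
 f'(n) = \frac{1}{\alpha_0(n)} + \frac{1}{\alpha_0(n)^2} + O\!\left(\frac{1}{\log^3 n}\right),
\]
as required. There is no real obstacle here — the only subtlety is careful bookkeeping to confirm that (i) all neglected terms are genuinely $O(1/\log^3 n)$, and (ii) the constant attached to $1/\alpha_0^2$ really is $1$ (rather than $1 + 2/\log b$), which depends on the clean cancellation between the $2/\log b$ piece produced by expanding $1/D$ and the piece coming from $-n\alpha_0'(n)/D^2$. This cancellation is precisely why $f(n)$ is defined using the shift by $1 + 2/\log b$ in the denominator.
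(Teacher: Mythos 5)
Your proposal is correct, and it is essentially the computation the paper has in mind: the paper's proof is simply the remark ``Elementary calculus!'', and your argument spells out exactly that calculation (quotient rule on $f=n/D$ with $D=\alpha_0-1-2/\log b$, expansion of $1/D$ and $1/D^2$ in powers of $1/\alpha_0$, and the cancellation of the $\tfrac{2/\log b}{\alpha_0^2}$ terms). The bookkeeping of the error terms and the identification of the denominator of $f$ with $\alpha_0-1-2/\log b$ are both accurate.
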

\begin{proof}
Elementary calculus!
\end{proof}

Let us note some simple properties of $\alpha_0(n)$, $\alpha(n)=\floor{\alpha_0(n)}$, and $\muexp(n)$, defined in \eqref{eq:mua}. Firstly, as noted in the introduction (see \eqref{rhoalpha1} and Figure~\ref{fig:alphaexponent}),
\begin{equation}\label{rhoalpha}
 \muexp(n) = \alpha_0(n)-\alpha(n)+o(1).
\end{equation}
In other words, $\muexp$ is essentially the fractional part of $\alpha_0$.
Secondly, it is immediate from the definition that $\alpha_0(n)$ is an increasing function of $n$ (for $n$ at least some constant $n_0$), and that
\begin{equation}\label{alphagrow}
 n'\sim n \implies \alpha_0(n')=\alpha_0(n) + o(1).
\end{equation}

As outlined in Section~\ref{ss:framework}, we will want to compare $f'$ to $1/\alpha$. The following lemma is a convenient form of the statement, allowing us to conveniently consider all $n$ in a suitable range.

\begin{lemma}\label{lemma:growthlowerbound}
If $n \sim n'$, then
\[
 \left.\dfdn\right|_{n'} = \frac{1}{\alpha(n)}+\frac{1-\muexp(n)}{\alpha(n)^2} + o \left( \frac{1}{\log^2 n} \right).
\]
\end{lemma}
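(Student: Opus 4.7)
The plan is to apply Lemma~\ref{lemma:derivative} at $n'$ and then rewrite the answer in terms of $\alpha(n)$ and $\muexp(n)$ using \eqref{rhoalpha} and \eqref{alphagrow}. First, Lemma~\ref{lemma:derivative} gives
\[
 \left.\dfdn\right|_{n'} = \frac{1}{\alpha_0(n')}+\frac{1}{\alpha_0(n')^2} + O\left( \frac{1}{\log^3 n'}\right),
\]
and since $n'\sim n$ we may replace $\log n'$ by $\log n$ in the error term. So the task reduces to expressing $1/\alpha_0(n')$ and $1/\alpha_0(n')^2$ in terms of $\alpha(n)$ and $\muexp(n)$ up to an error of $o(1/\log^2 n)$.

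Next I would combine the two displayed facts about $\alpha_0$ preceding the lemma: by \eqref{alphagrow}, $n\sim n'$ gives $\alpha_0(n')=\alpha_0(n)+o(1)$, and by \eqref{rhoalpha}, $\alpha_0(n)=\alpha(n)+\muexp(n)+o(1)$. Writing $\alpha=\alpha(n)$ and $\muexp=\muexp(n)$ for brevity, this yields
\[
 \alpha_0(n') = \alpha + \muexp + o(1).
\]
Since $\alpha=\Theta(\log n)$, a first-order Taylor expansion gives
\[
 \frac{1}{\alpha_0(n')}
= \frac{1}{\alpha}\cdot\frac{1}{1+(\muexp+o(1))/\alpha}
= \frac{1}{\alpha}-\frac{\muexp}{\alpha^2} + o\!\left(\frac{1}{\alpha^2}\right) + O\!\left(\frac{1}{\alpha^3}\right),
\]
where I have used that $\muexp\in[-o(1),1+o(1)]$ by \eqref{rhoalpha1}, so the linear term in $\muexp$ has size $O(1/\alpha^2)$ and the next correction is $O(1/\alpha^3)=O(1/\log^3 n)$. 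Similarly,
\[
 \frac{1}{\alpha_0(n')^2} = \frac{1}{\alpha^2} + O\!\left(\frac{1}{\alpha^3}\right) = \frac{1}{\alpha^2}+O\!\left(\frac{1}{\log^3 n}\right).
\]
Adding the two expansions and absorbing the $O(1/\log^3 n)$ error from Lemma~\ref{lemma:derivative} gives
\[
 \left.\dfdn\right|_{n'} = \frac{1}{\alpha}+\frac{1-\muexp}{\alpha^2} + o\!\left(\frac{1}{\log^2 n}\right),
\]
as required.

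There is no real obstacle here: the statement is just a bookkeeping reformulation of Lemma~\ref{lemma:derivative} that replaces the (real-valued, $n'$-dependent) $\alpha_0(n')$ by the integer $\alpha(n)$, with the correction term $(1-\muexp)/\alpha^2$ accounting for the fractional part. The only mildly delicate point is making sure the approximation $\alpha_0(n')=\alpha(n)+\muexp(n)+o(1)$ is strong enough to control $1/\alpha_0(n')$ to order $1/\alpha^2$, which is why \eqref{alphagrow} is required in addition to the cruder fact $\muexp=\alpha_0-\alpha+o(1)$.
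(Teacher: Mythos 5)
Your proposal is correct and follows essentially the same route as the paper: both combine Lemma~\ref{lemma:derivative} with \eqref{rhoalpha} and \eqref{alphagrow}, differing only in that the paper first treats the case $n'=n$ and then observes the expression changes by $o(1/\log^2 n)$ when $n$ is perturbed by a $1+o(1)$ factor, whereas you substitute $\alpha_0(n')=\alpha(n)+\muexp(n)+o(1)$ and Taylor-expand in one step. The expansion you carry out explicitly is exactly what the paper's ``immediate'' step leaves to the reader.
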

\begin{proof}
The case $n'=n$ is immediate from Lemma~\ref{lemma:derivative} and~\eqref{rhoalpha}. To see the result for $n'\sim n$, note that when we change $n$ by a factor of $1+o(1)$, from \eqref{alphagrow} the expression for $f'$ given in Lemma~\ref{lemma:derivative} changes by $o(1/\alpha_0^2)=o(1/\log^2 n)$.
\end{proof}

\subsection{Proof of Theorem~\ref{theorem:nstar}}
\label{ss:nstar}

\begin{proof}[Proof of Theorem~\ref{theorem:nstar}]
Throughout we fix $p\le 1-1/e^2$ and $\eps>0$, and consider a positive integer (or rather a sequence) $n$ such that $\mu(n) := \mu_{\alpha(n)}(n) \le n^{1-\eps}$, or, equivalently,
\begin{equation}
 \label{eq:defn1}
 \muexp(n) \le 1-\eps.
\end{equation}
We will find the required $n^*$ if $n$ is large enough.

We will apply Lemma~\ref{lem:framework} with $n^-=n$. Thus from now on we write $n^-$ for our `input' value of $n$.
We choose $\gamma=\gamma(n^-)$ tending to zero sufficiently slowly for various estimates below to hold, and will
choose $n^+$ so that $n^+\le n^-(1+\gamma)$. Thus the desired condition $n^*\sim n^-$ will follow from the conclusion
$n^*\in I=[n^-,n^+]$ of Lemma~\ref{lem:framework}.

As noted above, it is immediate from the definition \eqref{eq:a0def} of $\alpha_0(n)$ that (i) $\alpha_0$ is an increasing function, and (ii) $n^+\sim n^-$ implies $\alpha_0(n^+)=\alpha_0(n^-)+o(1)$. From \eqref{rhoalpha}, the fractional part of $\alpha_0(n^-)$ is $\muexp(n^-)+o(1)$, which is at most $1-\eps/2$, say, if $n^-$ is large enough, which we assume from now on. Thus $\alpha(n^+)=\alpha(n^-)$. In other words, the condition \eqref{eq:defn1} ensures that $\alpha_0(n)$ is not just about to pass through an integer value as we increase $n$ from $n^-$. Let us write
\[
 a = \alpha(n^-),
\]
noting that in fact $\alpha(n)=a$ for all $n\in I$.

Let $f(n)$ and $\Delta(n)$ be as in Theorem~\ref{theorem:bounds}. In particular, the error function $\Delta(n)$ is $o(n/\log^2n)$. We will take
\begin{equation}\label{eq:Delta}
 \Delta=\max_{n\in I}\Delta(n) = o(n^-/\log^2n^-).
\end{equation}
By Lemma~\ref{lemma:growthlowerbound} and \eqref{eq:defn1},
if $n^-$ is large enough we have $f'(n)\ge 1/a+\delta$ for all $n \in I$, 
where
\begin{equation}\label{eq:delta}
 \delta= \frac{\eps}{2a^2} \sim \frac{\eps}{8\log_b^2n^-}.
\end{equation}
So far, we have verified the first two conditions of Lemma~\ref{lem:framework}. For the third, by assumption we have $\Pr\bb{ \chi(G_{n,p}) \in [s_n,t_n] } \ge 0.9$, and our aim is to prove a lower bound on some $\ell_n:=t_n-s_n$.

For \eqref{fra4}, we take $r(n)=  \lfloor \sqrt{\mu(n)}  \rfloor$
where $\mu(n)=\mu_a(n)=\binom{n}{a}(1-p)^{\binom{a}{2}}$ as usual; note that here $a=\alpha(n)$. This is clearly an increasing function of $n$. Moreover, since $a\sim 2\log_b n$ the first condition of Corollary~\ref{corollary:coupling} holds with room to spare. For the second, for any $n\in I$ we have $\mu(n)=n^{\muexp(n)}$ by definition, and from \eqref{rhoalpha} and \eqref{alphagrow} we have $\muexp(n)=\muexp(n^-)+o(1)\le 1$, so $\mu(n)\le n\le n^{1.98}$. Hence Corollary~\ref{corollary:coupling} applies, establishing \eqref{fra4}.

Finally, from \eqref{eq:Delta} and \eqref{eq:delta} we have $\Delta/\delta=o(n^-)$.
Also,
$r(n^+) \le \sqrt{\mu(n^+)}\le \sqrt{n^+}$ as above, so both lower bounds on $n^+-n^-$ in \eqref{fra5} are $o(n^-)$, and we can choose $n^+$ to satisfy these bounds as long as $\gamma(n)\to 0$ slowly enough.

Thus, all conditions of Lemma~\ref{lem:framework} are met, and we conclude that there is some $n^*\in I$ such that
\[
 t_{n^*}-s_{n^*}\ge \frac{a\delta}{2} r(n^*) \sim \frac{\eps}{4\alpha(n^*)} \sqrt{\mu(n^*)}
 \sim \frac{\eps}{8\log_b n^*}\sqrt{\mu(n^*)} = \frac{\eps\log b}{8\log n^*}\sqrt{\mu(n^*)}.
\]
This establishes the conclusion of Theorem~\ref{theorem:nstar} if $n$ is large enough.
\end{proof}

\begin{remark}\label{rem:bottleneck}
Let us comment briefly on how the error bound in Theorem~\ref{theorem:bounds} affects the final bounds we obtain.
At first sight, it appears to play little role: the interval length we obtain depends on $\delta$ (the gradient difference) and $r=\sqrt{\mu(n)}$. However, via \eqref{fra5}, if $\Delta$ is large we need to consider a large range $I$ of possible values of $n$. This not only weakens the conclusion (finding $n^*$ far from $n$) but can cause a more serious problem: over the interval $I$ both $\mu(n)$ and $\delta(n)=f'(n)-1/\alpha(n)$ vary, so if our bound on $\Delta$ is too weak, we will not obtain a useful lower bound on $\delta$ and the argument will fail. Conversely, to obtain a final non-concentration length very close to $n^{1/2}$, we need to consider values of $n$ such that $\mu(n)$ is very close to $n$, which will only be true over a relatively short interval. So for this we need a better bound on $\Delta$. We revisit this in Section~\ref{s:polylog}.
\end{remark}

\subsection{Proof of Theorem~\ref{theorem:n12}}\label{ss:n12}
Fix $c<1/2$, and suppose that $[s_n, t_n]$ is a sequence of intervals which contains $\chi(G_{n,p})$ whp, with interval lengths $\ell_n:=t_n-s_n$. We will show that there is an integer $n^*$ such that $\ell_{n^*} \ge \left(n^*\right)^c$, which suffices to prove Theorem~\ref{theorem:n12}.

Let
\begin{equation}\label{eq:epsdef}
 \eps = \frac{1-2c}{3}  \in \left(0, \frac 13 \right).
\end{equation}
It is very easy to see that we can find an arbitrarily large integer $n$ such that
\begin{equation}\label{rhochoice}
 \muexp(n) \in \left(1-2\eps, 1-\eps\right).
\end{equation}
(Recall from \eqref{rhoalpha} that $\muexp$ is essentially the fractional part of $\alpha_0(n)$, which increases smoothly with $n$.)
By definition of $\muexp(n)$, this implies that
\begin{equation}
 \mu(n) =n^{\muexp(n)} \in \left( n^{1-2\eps}, n^{1-\eps}\right). \label{eq:r1}
\end{equation}
We first consider the case $p \le 1-1/e^2$; the case $p>1-1/e^2$ will follow by some straightforward modifications which we describe at the end of the proof. By Theorem~\ref{theorem:nstar}, there is an integer $n^* \sim n$ such that
\begin{equation}\label{eq:elln}
 \ell_{n^*} \ge C(\eps,p) \frac{\sqrt{\mu(n^*)}}{\log n^*}
 \end{equation}
As $n^*\sim n$, it follows that $\alpha_0(n^*) = \alpha_0(n)+o(1)$, and so (by \eqref{rhoalpha} and \eqref{rhochoice}) $\alpha(n^*)=\alpha(n)$. Therefore,
\[
 \mu(n^*) \sim  \mu(n) \left(n^*/n \right)^{\alpha(n)} = \mu(n)(1+o(1))^{O(\log n)} = \mu(n) n^{o(1)} \ge n^{1-2\eps+o(1)} = \left(n^*\right)^{1-2\eps+o(1)}.
\]
From \eqref{eq:elln} it follows that if $n$ is large enough, then
\[
 \ell_{n^*} \ge \left(n^*\right)^{\frac{1-2\eps+o(1)}{2}} >   \left(n^*\right)^{\frac{1-3\eps}{2}} = \left(n^*\right)^{c}, 
\]
as required.

Now suppose that $p > 1-1/e^2$. So far in this paper, whenever we assumed $p\le 1-1/e^2$, it was only to be able to use the estimate for the chromatic number from Theorem \ref{theorem:bounds}. More specifically, we only used that we have some estimate $\chi(G_{n,p})=f(n)+o \left(\frac{n}{\log^2 n} \right)$ so that the derivative $f'(n)$ is sufficiently larger than $\tfrac 1\alpha$; namely that 
\begin{equation}\label{eq:whatweused}
 f'(n) \ge \frac{1}{\alpha(n)} + \frac{1-\muexp}{\alpha(n)^2}+o \left(\frac{1}{\log^2 n}  \right).
\end{equation}
If $p>1-1/e^2$, \cite{heckel2018chromatic} gives a more complicated expression which also determines $\chi(G_{n,p})$ up to an error term of size $o \left( \frac {n} {\log^2n}\right)$. Fortunately, if $\muexp = \alpha_0 - \alpha +o(1)$ is close to $1$, this estimate takes a simple form which is given in the following lemma.
\begin{lemma}\label{lem:estimatelargep}
 Fix $p> 1-1/e^2$, and let $u= \frac{2}{\log b}< 1$. For all $n$ such that $\alpha_0(n) - \alpha(n) \ge u$, whp
 \[
  \chi(G_{n,p}) = \frac{n}{\alpha(n)-1} + o\left(\frac{n}{\log^ 2n}\right).
 \]
\end{lemma}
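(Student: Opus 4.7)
The plan is to appeal to the explicit estimate for $\chi(G_{n,p})$ proved in~\cite{heckel2018chromatic}, which for any constant $p \in (0,1)$ gives an explicit function $F_p(n)$ with $\chi(G_{n,p}) = F_p(n) + o(n/\log^2 n)$ whp. For $p \leq 1 - 1/e^2$ one has $F_p(n) = f(n) := n/(\alpha_0(n) - 1 - u)$, exactly as in Theorem~\ref{theorem:bounds}; for $p > 1 - 1/e^2$ the expression is piecewise, arising from an optimization over profiles of colour-class sizes, and the active branch depends on the fractional part $\theta(n) := \alpha_0(n) - \alpha(n) \in [0,1)$.

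The starting point is the elementary algebraic identity $f(n) = n/(\alpha(n) - 1 + (\theta(n) - u))$, so that $f(n) = n/(\alpha - 1)$ exactly when $\theta = u$. Since $u = 2/\log b < 1$ is equivalent to $p > 1 - 1/e^2$, under the hypothesis of the lemma the threshold $\theta = u$ genuinely lies in $[0,1)$. Heuristically the transition at $\theta = u$ marks the point at which the optimal colouring switches from a mixed strategy using both $\alpha$-sets and $(\alpha-1)$-sets (for $\theta < u$, where $f(n) > n/(\alpha-1)$) to a pure $(\alpha-1)$-set strategy (for $\theta \geq u$, where $f(n) \leq n/(\alpha-1)$ so the mixed strategy no longer pays off); the lemma records this second regime, in which the formula collapses to the simple expression $n/(\alpha - 1)$.

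Granting this picture, the proof reduces to extracting the appropriate branch of the piecewise formula of~\cite{heckel2018chromatic} and verifying by direct computation that, under the hypothesis $\alpha_0(n) - \alpha(n) \geq u$, it equals $n/(\alpha(n) - 1) + o(n/\log^2 n)$; the $o(n/\log^2 n)$ error is inherited from the input estimate with no further probabilistic work. The main obstacle I anticipate is not mathematical but notational: one must translate the parametrization used in~\cite{heckel2018chromatic} (phrased in terms of the profile optimization rather than directly in the quantities $\theta$ and $u$) and check that the threshold in their case analysis corresponds precisely to $\theta = u$. Once that identification is in place, the conclusion follows immediately from the calculation above.
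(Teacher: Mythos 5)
Your overall route is the same as the paper's: quote the sharper explicit estimate of \cite{heckel2018chromatic} for $p>1-1/e^2$ and check that, under the hypothesis $\alpha_0(n)-\alpha(n)\ge u$, it collapses to $n/(\alpha-1)$, the $o(n/\log^2 n)$ error being inherited with no further probabilistic work. However, the step you set aside as ``not mathematical but notational'' is in fact the entire content of the proof, and as written your plan has a genuine gap there. Theorem~1 of \cite{heckel2018chromatic} is not packaged as a piecewise formula with an explicit threshold at $\theta=u$ in the fractional part: it states that whp $\chi(G_{n,p}) = n/(\gamma(n)-x_0)+o(n/\log^2 n)$, where $\gamma(n)=\alpha_0(n)-1-u$, $d=\gamma-\floor{\gamma}$, and $x_0$ is the \emph{smallest non-negative solution} of $\varphi(x):=(1-d+x)\log(1-d+x)+(d-x)(1-d)/u\le 0$. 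So there is no ready-made branch to ``extract''; one must actually determine $x_0$ in the regime $\theta\ge u$, and your proposal never does this, nor does it verify that the transition of the optimization really occurs at $\theta=u$ --- which is precisely the assertion you are assuming.

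The missing verification is short but not vacuous, and it is where the hypothesis enters. If $\alpha_0-\alpha\ge u$, then $\gamma=(\alpha-1)+(\theta-u)$ with $0\le\theta-u<1$, so $\floor{\gamma}=\alpha-1$ and $d=\theta-u<1-u$, i.e.\ $u<1-d$. Then for $0\le x\le d$ one has $\varphi'(x)=\log(1-d+x)+1-(1-d)/u\le 1-(1-d)/u<0$, and since $\varphi(d)=0$ it follows that $\varphi>0$ on $[0,d)$, hence $x_0=d$ and $\gamma-x_0=\floor{\gamma}=\alpha-1$, giving the lemma. Your algebraic identity $n/(\alpha_0-1-u)=n/\bigl(\alpha-1+(\theta-u)\bigr)$ correctly locates where the $p\le 1-1/e^2$ formula would equal $n/(\alpha-1)$, but by itself it says nothing about what the cited estimate actually yields for $p>1-1/e^2$ throughout the regime $\theta\ge u$; that is exactly what the analysis of $\varphi$ (monotonicity on $[0,d]$ using $u<1-d$, together with $\varphi(d)=0$) supplies. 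To complete your proof you need to carry out this computation, or an equivalent identification of the optimizer in the profile optimization of \cite{heckel2018chromatic}.
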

\begin{proof}
By Theorem~1 in \cite{heckel2018chromatic}, whp
 \begin{equation}\label{eq:estimateplarge}
  \chi(G_{n,p}) = \frac{n}{\gamma(n) - x_0} + o\left(\frac{n}{\log^ 2n}\right),
 \end{equation}
where $\gamma(n) = \alpha_0(n) -1-u$, and, letting $d = \gamma - \left \lfloor \gamma \right \rfloor$, $x_0$ is the smallest non-negative solution to
\[
 \varphi(x) := (1-d+x) \log (1-d+x) + (d-x)(1-d)/u \le 0.
\] 
Suppose that $\alpha_0(n) - \alpha(n) \ge u$. Then $\left \lfloor \gamma \right \rfloor = \left \lfloor \alpha_0 - 1 - u \right \rfloor = \alpha - 1$ and $d = \gamma - \left \lfloor \gamma \right \rfloor = \alpha_0 - \alpha - u$. In particular, $u < 1- d$. 
Then for $0\le x \le d$, note that 
\begin{align*}
\varphi'(x) &= \log (1-d+x) + 1 - (1-d)/u  \le  1 - (1-d)/u  < 0 .
\end{align*}
As $\varphi(d)=0$, this implies that $d$ is the smallest nonnegative solution to $\varphi(x) \le 0$, and so $x_0=d$. By \eqref{eq:estimateplarge}, whp
 \begin{equation*}
  \chi(G_{n,p}) = \frac{n}{\gamma - d} + o\left(\frac{n}{\log^ 2n}\right) = \frac{n}{\left \lfloor \gamma \right \rfloor} + o\left(\frac{n}{\log^ 2n}\right) = \frac{n}{\alpha-1} + o\left(\frac{n}{\log^ 2n}\right).
 \end{equation*}
\end{proof}
Let $u = \tfrac 2 {\log b}$, and fix $\eps>0$. 
For $n$ large enough, if $\muexp  > u+ \eps  $  then $\alpha_0 - \alpha = \muexp+o(1) \ge u$ and Lemma~\ref{lem:estimatelargep} above applies. Let $f(n) = \frac{n}{\alpha(n)-1}$. If we only consider $n$ in an interval where $\alpha(n)$ is constant --- as we did the proof of Theorem~\ref{theorem:nstar} --- we have
\[
 f'(n) = \frac{1}{\alpha-1}
  = \frac{1}{\alpha} + \frac{1}{\alpha(\alpha-1)} 
 > \frac{1}{\alpha} + \frac{1}{\alpha^2} 
 \ge  \frac{1}{\alpha}+\frac{1-\muexp}{\alpha^2}.
\]
Comparing this to \eqref{eq:whatweused}, all our conclusions from the case $p\le 1-1/e^2$ remain valid as long as $\muexp \in (u+\eps,1)$. To prove the statement of Theorem~\ref{theorem:n12}, we can assume $c$ is arbitrarily close to $\tfrac 12$, so by \eqref{eq:epsdef} we can make $\eps$ arbitrarily small. By \eqref{rhochoice}, we can assume that $\muexp \in (u+\eps,1)$. The rest of the proof of Theorem~\ref{theorem:n12} is unchanged from the case $p\le 1-1/e^2$.
\qed

\section{Proof of Theorem~\ref{theorem:polylog}}\label{s:polylog}

In this section we prove our final result, Theorem~\ref{theorem:polylog}. Throughout, we fix $p=\tfrac 12$. When we use our assumption \eqref{eq:assump}, we shall state this explicitly. This happens only at one point in the proof of Theorem~\ref{theorem:polylog}; the assumption is not needed for any of our lemmas. The overall proof strategy is very similar to that we used for Theorem~\ref{theorem:nstar}, based on our Framework Lemma, Lemma~\ref{lem:framework}. Before turning to the details, let us outline roughly why we choose the parameters that we do, as motivation for the arguments that follow.

We use the same coupling lemma as before which, in terms of the parameters of Lemma~\ref{lem:framework}, leads to choosing $r\approx \sqrt{\mu_\alpha(n)}$, where $\alpha=\alpha(n)$. As Lemma~\ref{lem:framework} produces an interval of length at least $\alpha \delta r / 2$, for a given value of $\alpha$ we want $\delta r$ to be as large as possible, so we try to choose $n$ so that $\delta(n) \sqrt{\mu_\alpha(n)}$ is as large as possible, which turns out to be when $\mu_\alpha(n) \approx n/\log^2 n$. For why this is optimal, see \S\ref{s:intuition}.

As we shall see below, in this range the difference $\delta$ between the slope of the chromatic number and $1/\alpha$ is quite small, of order $\Theta(\log\log n/\log^3 n)$. We will consider an interval of values of $n$ differing by at most a factor $1+x$ where $x\approx 1/\log n$, so that, over the range of $n$, $\mu_\alpha$, which is roughly proportional to $n^\alpha$, varies by a constant factor. This means that we need our $\Delta$ to be at most roughly $xn\delta\approx n \log\log n/\log^4 n$, to satisfy the first condition in \eqref{fra5}.

The error bound from the concentration result \eqref{eq:assump} is much smaller than this. The trouble is that it applies to the $\beta$-bounded chromatic number $\chi_\beta$, where $\beta=\alpha-1$, not the chromatic number itself. However, it turns out that, by a first moment argument, we can bound $\chi$ from below by $k_\beta-O(\mu_\alpha(n)\log\log n/\log^2 n)$, where $k_\beta$ is the first moment threshold for $\beta$-bounded colourings; see Definition~\ref{def:tbdd}. Since \eqref{eq:assump} gives $\chi_\beta\le k_\beta+o(n\log\log n/\log^4 n)$ and $\chi\le\chi_\beta$ by definition, we thus have that $\chi(\Gnh)$ is (just) close enough to $k_\beta$ for our argument to work.

Throughout the section we consider (sometimes only integer, sometimes real) values of $n$ in a set $W\subset \R$ with the following property: $W$ is a disjoint union of intervals, on each of which $\alpha(n)$ is constant, where $\alpha(n)$ is defined in \eqref{eq:a0def} and \eqref{alphadef}. In short, $\alpha(n)$ is \emph{locally constant} on $W$, formally meaning that it has derivative zero.

In the following arguments, there are two relevant ways that $n$ varies: within an interval, and between intervals. When we differentiate with respect to $n$, we are (by definition) working locally within an interval, and then $\alpha=\alpha(n)$ is constant. On the other hand, for asymptotics (such as the bound $\alpha(n)=O(\log n))$, the variation between intervals is relevant. 

Intuitively, one can think of $n$ as very large (so that various asymptotic estimates hold), and in the analysis, in particular the application of the framework lemma, it is only the variation within an interval that matters. So one should think of $\alpha(n)$ as a constant (derivative zero) that happens to be of logarithmic order. Formally, of course, there is no issue: $\frac{\dd \alpha}{\dd n}=0$ on the set $W$.

The hardest part of the proof turns out to be understanding the behaviour of (a suitable approximation to) $k_\beta(n)$, where $\beta=\beta(n)=\alpha(n)-1$. The following lemma, proved in the next section, provides this. Note that we work almost all the time with $\alpha-1$ rather than $\alpha$, so to keep the formulae compact we write $\beta$ for $\alpha-1$. In fact, although we don't need it here, the same method works with no difficulty for $\alpha-2$ also. We prove the more general case since it may be useful elsewhere, but the reader may wish to simply consider $\beta=\alpha-1$.

\begin{lemma}\label{lem:polylog}
Define $\beta=\beta(n)=\alpha(n)-i$, where $\alpha(n)$ is defined in \eqref{alphadef} and $i\in \{1,2\}$ is constant,
and let $W\subset \R$ consist of a disjoint union of intervals on each of which $\alpha(n)$ is constant. For \emph{integer} $n\in W$, define $k_\beta(n)$ as in \eqref{ktdef}. Then there is a real-valued function $k^*(n)=k^*_\beta(n)$, defined for all $n\in W$, with the following three properties:
\[
 k_\beta(n) = k^*(n) + O(\log^2 n) \text{ for all integer }n\in W,
\]
while for all real $n\in W$ we have
\begin{equation}\label{noks}
 \frac{n}{k^*(n)} = \alpha(n) + \frac{\log(\mu_{\alpha(n)}(n))}{\log n-\log\log n} - \frac{2}{\log 2} - 1 + O(1/\log n),
\end{equation}
and
\[
 \left(\frac{\dd k^*(n)}{\dd n}\right)^{-1} = \frac{n}{k^*(n)} + \frac{2}{\log 2} + O(1/\log n).
\]
\end{lemma}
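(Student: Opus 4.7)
The plan is to extract all three conclusions of the lemma from a saddle-point expansion of the first moment $E_{n,k,\beta}$, with $k^*$ defined as the continuous solution of the resulting smooth equation.

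First I would write down the exact identity
\[
E_{n,k,\beta}=\frac{n!}{k!}\,[x^n]\,Z_\beta(x)^k,\qquad Z_\beta(x):=\sum_{j=1}^\beta \frac{x^j}{j!\,2^{\binom{j}{2}}},
\]
obtained by summing over colour-class size profiles. A standard saddle-point expansion gives
\[
E_{n,k,\beta}=(1+o(1))\,\frac{n!}{k!}\,\frac{Z_\beta(x_0)^k}{x_0^n\sqrt{2\pi k\,v(x_0)}},
\]
where $x_0=x_0(n,k)>0$ is the unique positive root of $k\,x\,Z_\beta'(x)/Z_\beta(x)=n$ (the tilt making the mean of the class-size measure $q_j(x)\propto x^j/(j!\,2^{\binom{j}{2}})$ equal $s:=n/k$) and $v(x_0)$ is the corresponding variance. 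Denote the right-hand side by $E^*(n,k)$, which is smooth in $(n,k)$ with $\partial_k \log E^*=\Theta(\log^2 n)$ (this is Corollary~\ref{dnk} in disguise), and define $k^*(n)$ as the unique real $k$ satisfying $E^*(n,k)=1$. Since the saddle-point error is polynomial, $|\log E_{n,k,\beta}-\log E^*(n,k)|=O(\log n)$, so combined with $\partial_k \log E^*=\Theta(\log^2 n)$ we get the $O(\log^2 n)$ approximation $k_\beta(n)=k^*(n)+O(\log^2 n)$ with huge slack.

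For the formula for $s$, take logs of $E^*(n,k^*)=1$, apply Stirling to $n!$ and $k^*!$, and divide through by $k^*$:
\[
(s-1)\log n+\log s+\log Z_\beta(x_0)-s\log x_0=s-1+O(\log^2 n/n).
\]
This is the technical heart. For the relevant range of $n$, $s$ will sit roughly in $[\beta-2.9,\beta-1.9]$, so $x_0$ lies in the regime where the top weights $q_{\beta-i}(x_0)$ (with $i$ small) contribute comparable mass to $Z_\beta(x_0)$ — the ratios $q_{\beta-i}/q_{\beta-i+1}=(\beta-i+1)\,2^{\beta-i}/x_0$ are all of constant order, producing a fast-decaying but non-geometric tail. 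Expanding $\log Z_\beta(x_0)$ as a finite sum of the top terms and simultaneously solving the saddle equation $s=\langle j\rangle_{q(\cdot,x_0)}$ lets one express $s$ and $\log x_0$ in terms of $\beta$ and $n$. The relation $\mu_\beta(n)=\Theta\bb{(n/\log n)^{\alpha-\beta}\mu_\alpha(n)}$, generalising \eqref{eq:mua-1}, lets one rewrite the resulting combination in the form \eqref{noks}; the constant $-2/\log 2$ emerges from the tail $\sum_{i\ge 0} i\cdot(\text{weight}_i)$ of this truncated distribution, and the fractional-part term $\log\mu_\alpha/(\log n-\log\log n)$ tracks how close $\alpha_0$ is to an integer and hence how the distribution on $\{\beta,\beta-1,\ldots\}$ is placed.

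For the derivative, differentiate $\log E^*(n,k^*(n))=0$ implicitly. The saddle-point condition kills the $x_0$-dependence of the leading partials, so $\partial_n\log E^*=\log(n/x_0)+O(1/n)$ and $\partial_k\log E^*=\log(k/Z_\beta(x_0))+O(1/k)$, hence
\[
(\dd k^*/\dd n)^{-1}=\frac{\log(k^*/Z_\beta(x_0))}{\log(n/x_0)}+o(1).
\]
Rearranging the log-of-$E^*=1$ identity gives $\log(k^*/Z_\beta(x_0))=s\log(n/x_0)-s+1+O(\log^2 n/n)$, so the ratio above is $s-(s-1)/\log(n/x_0)+O(1/\log n)$; the regime analysis from the previous paragraph then implies $\log(n/x_0)=-(s-1)\log 2/2+O(1)$, and a short Taylor expansion delivers $s+2/\log 2+O(1/\log n)$. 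The main obstacle is precisely the saddle-point analysis of the previous paragraph: the careful multi-term expansion of $Z_\beta(x_0)$ near the truncation is what produces both the explicit constant $2/\log 2$ and the arithmetic fractional-part correction to the required accuracy $O(1/\log n)$, rather than the easier $o(1)$ which would be insufficient for downstream use in Theorem~\ref{theorem:polylog}.
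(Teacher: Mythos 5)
Your route is, at bottom, the same analysis as the paper's, just in generating-function dress: your tilted class-size measure $q_j(x_0)\propto x_0^j/(j!\,2^{\binom j2})$ is exactly the paper's Lagrange-multiplier profile $p_i=e^{x+iy}d_i^{-1}$ with $e^y=x_0$, your envelope-theorem computation of $\partial_n\log E^*,\partial_k\log E^*$ is Lemma~\ref{dLnk}, and the implicit differentiation plus the ``$\log E^*=1$'' rearrangement reproduces Lemma~\ref{lem:k*diff} (and is quantitatively sound: the main term $s=n/k^*$ is exact, and the $O(1)$ uncertainty only enters the correction $(s-1)/\log(x_0/n)=2/\log 2+O(1/\log n)$). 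The one genuinely different ingredient is the exact identity $E_{n,k,\beta}=\frac{n!}{k!}[x^n]Z_\beta(x)^k$, which replaces the paper's profile-sum-plus-integer-rounding comparison (Lemma~\ref{lemsimp}) and its $O(\log^4 n)$ loss; the price is that you need a lower bound on $[x^n]Z_\beta(x)^k$ of saddle-point type, which you assert as ``standard''---it is fillable (the tilted weights satisfy $d_{j-1}d_{j+1}\ge d_j^2$, so a log-concavity/local-CLT argument gives the needed polynomial-accuracy bound), but it is not written.

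The real issue is that the step you yourself flag as ``the technical heart'' is only described, not executed, and it is precisely where the paper's work lies. To get \eqref{noks} and the derivative to the stated accuracy you must pin down $\log x_0=2\log n-\log\log n+O(1)$ and $\log Z_\beta(x_0)=\alpha\log x_0-\log d_\alpha+O(1)$ (i.e.\ that the tilted weight at the truncation point, and just beyond it, is $\Theta(1)$): these are the paper's Lemmas~\ref{lem:ybrho} and~\ref{lem:xay}, proved by a short but genuine argument that a mean within $O(1)$ of the truncation forces the top ratios $r_j=x_0/(j2^{j-1})$ to be $\Theta(1)$. You assert the ratio claim from ``$s$ will sit roughly in $[\beta-2.9,\beta-1.9]$'', but that localization of $s=n/k^*$ is itself part of what must be proved (the paper bootstraps it via the interval $I_n$, the explicit $k_0$, and the monotonicity $\partial_k L_0=\Theta(\log^2 n)$ in Lemma~\ref{lem:k*}); your proposal needs the analogous existence/uniqueness/localization argument for the root of $E^*(n,k)=1$, with $n!$ replaced by $\Gamma(n+1)$ so that $k^*$ is defined and differentiable for real $n$. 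A further warning sign that the computation has not actually been carried through: the constant $-2/\log 2$ does not ``emerge from the tail $\sum_i i\cdot(\text{weight}_i)$'' of the truncated distribution---in the identity it comes from the $-\rho$ and $-\log k$ terms (Stirling's $e^{-n}$, $e^{-k}$ together with $k=\Theta(n/\log n)$), exactly as in Lemma~\ref{L0approx}. So: right framework, correct skeleton, one nice simplification, but the quantitative core (the $O(1)$-precision tilt estimates and the a priori localization of $n/k^*$) is still missing.
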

The (somewhat lengthy) proof of Lemma~\ref{lem:polylog} is given in Section~\ref{ss:polylog2}.
\begin{remark}\label{rem:noks}
The formula \eqref{noks} may seem slightly mysterious; we make two observations. Firstly, $\alpha(n)$ here can be replaced by any integer $a(n)$ such that $a(n)=\alpha_0(n)+O(1)$, provided we use the same $a$ in both places. This follows from the fact that $\mu_a/\mu_{a-1}=\Theta(\log n/n)$ for $a=\alpha_0(n)+O(1)$. Secondly, a straightforward but rather tedious calculation shows that, for $a=\alpha_0(n)+O(1)$, we have
\begin{equation}\label{noks0}
 a(n) + \frac{\log(\mu_{a(n)}(n))}{\log n-\log\log n}
 = \alpha_0(n) + \left(\frac{2}{\log 2}-\frac12\right)\frac{\log\log n}{\log n} + O(1/\log n).
\end{equation}
To make sense of this note that one can interpolate the definition of $\mu_a(n)$ to non-integer values of $a$ in a natural way. As noted above, the left-hand side is then (roughly) constant for $a$ near $\alpha_0$. Substituting in $a=\alpha_0$, we expect $\mu_{\alpha_0}$ to be close to $1$. This explains \eqref{noks0} apart from the $c\log\log n/\log n$ term. This term is only there because we have taken a simple definition of $\alpha_0$, rather than solve $\mu_{\alpha_0}=1$ very precisely; if we were to do so, we would simply have $\alpha_0(n)+O(1/\log n)$ here, but there would be minor additional complications in other formulae. Finally, we don't use this expression in \eqref{noks} because both in the proof and in the application, it is easier to work with $\alpha$ and $\mu_\alpha$ than with $\alpha_0$.
\end{remark}

Our next lemma, proved in \S\ref{ss:alphashift}, is the promised lower bound on $\chi(\Gnh)$ in terms of $k_\beta$.

\begin{lemma}\label{lem:as1}
Let $\eps>0$ be constant. Suppose that $\mu_{\alpha(n)}(n)=\Theta(n/\log^2 n)$. Then, whp,
\[
 \chi(\Gnh) \ge k_\beta(n) - (1+\eps)\mu_{\alpha(n)}(n)\frac{\log\log n}{c_0\log^2 n},
\]
where $c_0=2/\log 2$, and $k_\beta(n)$ is defined in \eqref{ktdef}, with $\beta=\beta(n)=\alpha(n)-1$.
\end{lemma}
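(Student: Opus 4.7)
The plan is a first-moment argument on $k$-colourings. Let $\alpha = \alpha(n)$, $\beta = \alpha - 1$, set
\[
M \;=\; (1+\eps)\,\mu_\alpha(n)\,\frac{\log\log n}{c_0 \log^2 n}, \qquad k \;=\; k_\beta(n) - \lceil M \rceil,
\]
and let $C_k$ be the number of unordered proper $k$-colourings of $\Gnh$; it suffices to show $\Pr(\chi(\Gnh)\le k)\to 0$. I would classify $k$-colourings by the number $j$ of colour classes of size exactly $\alpha$, writing $C_k=\sum_j C_k^{(j)}$. Since $C_k^{(j)}=0$ for $j>X_\alpha$, a union bound combined with a Chebyshev estimate using $\Var(X_\alpha)=\Theta(\mu_\alpha)$ (valid in this regime) gives
\[
\Pr\bb{\chi(\Gnh)\le k} \;\le\; \Pr(X_\alpha > J) + \sum_{j=0}^{J}\E[C_k^{(j)}] \;\le\; o(1) + \sum_{j=0}^{J}\E[C_k^{(j)}],
\]
for $J=\lfloor(1+\gamma)\mu_\alpha\rfloor$ and any fixed $\gamma\in(0,\eps)$. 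Regarding a colouring counted by $C_k^{(j)}$ as $j$ disjoint independent $\alpha$-sets together with a $\beta$-bounded $(k-j)$-colouring of the remaining $n-j\alpha$ vertices, linearity of expectation gives
\[
\E[C_k^{(j)}] \;=\; \frac{1}{j!}\prod_{i=0}^{j-1}\binom{n-i\alpha}{\alpha}\,2^{-\binom{\alpha}{2}} \cdot E_{n-j\alpha,\,k-j,\,\beta} \;\le\; \frac{\mu_\alpha^j}{j!}\,E_{n-j\alpha,\,k-j,\,\beta}.
\]

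The heart of the argument is a uniform upper bound on $E_{n-j\alpha,k-j,\beta}$ over $j\in[0,J]$. By Lemma~\ref{lem:polylog}, $(\dd k^*/\dd n)^{-1} = n/k^* + c_0 + O(1/\log n)$ and $n/k^*=\alpha+O(1)$ in the regime $\mu_\alpha=\Theta(n/\log^2 n)$; in particular $\dd k^*/\dd n< 1/\alpha$, which yields $k^*(n-j\alpha)\ge k^*(n)-j$, and hence $(k-j)\le k^*(n-j\alpha)-M+O(\log^2 n)$ uniformly in $j\in[0,J]$. Invoking Corollary~\ref{dnk} --- which pins down the per-colour multiplicative rate of change of $E_{n,k,\beta}$ near the threshold as $\exp\!\bigl((1+o(1))\,c_0\log^2 n/\log\log n\bigr)$ --- together with $E_{n-j\alpha,k^*(n-j\alpha),\beta}=O(1)$, I would deduce
\[
E_{n-j\alpha,\,k-j,\,\beta} \;\le\; \exp\!\bigl(-(1+\eps)(1-o(1))\,\mu_\alpha\bigr),
\]
where the exponent uses the defining identity $M\cdot c_0\log^2 n/\log\log n=(1+\eps)\mu_\alpha$. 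Summing then gives
\[
\sum_{j=0}^{J}\E[C_k^{(j)}] \;\le\; e^{(1+\gamma)\mu_\alpha}\exp\!\bigl(-(1+\eps)(1-o(1))\mu_\alpha\bigr) \;=\; \exp\!\bigl(-(\eps-\gamma-o(1))\mu_\alpha\bigr) \;\to\; 0,
\]
since $\mu_\alpha\to\infty$ and $\gamma<\eps$.

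The main obstacle is the sharp, uniform control of $E_{n-j\alpha,k-j,\beta}$ across the full range $j\in[0,J]$. The slope correction $j\alpha(1/\alpha-\dd k^*/\dd n)$ is itself of order comparable to $M$, so the whole argument hinges on the \emph{sign} of $1/\alpha-\dd k^*/\dd n$ supplied by Lemma~\ref{lem:polylog}: this correction pushes $(k-j)$ further below, not above, the threshold. The precise constant $c_0=2/\log 2$ comes from implicit differentiation of $E_{n,k^*(n),\beta}=1$, which ties the slope $\dd k^*/\dd n$ from Lemma~\ref{lem:polylog} to the per-colour factor from Corollary~\ref{dnk}, and closing the calculation requires these two rates to match cleanly.
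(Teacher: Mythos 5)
Your overall skeleton is the same as the paper's (the paper proves a sharper version, Lemma~\ref{lem:alphashift}, exactly this way: a first moment bound over colourings grouped by the number $m$ of $\alpha$-classes, truncated at $m\le(1+\delta)\mu_\alpha$ via Chebyshev, with $E_{n-\alpha m,k-m,\beta}$ controlled through its distance below the threshold $k^*(n-\alpha m)$ and the per-colour derivative from Corollary~\ref{dnk}). But the step you yourself identify as the heart of the argument is wrong in sign. From Lemma~\ref{lem:polylog}, $(\dd k^*/\dd n)^{-1}=n/k^*+c_0+O(1/\log n)$ and, in the regime $\mu_\alpha=\Theta(n/\log^2 n)$, $n/k^*=\alpha-1-c_0+\frac{\log\mu_\alpha}{\log n-\log\log n}+O(1/\log n)$; the $O(1)$ terms cancel exactly and one gets $(\dd k^*/\dd n)^{-1}=\alpha-(1+o(1))\frac{\log\log n}{\log n}<\alpha$, i.e.\ $\dd k^*/\dd n>1/\alpha$, the \emph{opposite} of what you assert. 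Consequently $k^*(n-j\alpha)\ge k^*(n)-j$ is false; the correct bound is $k^*(n)-k^*(n-j\alpha)\le j+(1+o(1))\frac{j\log\log n}{c_0\log^2 n}$, and for $j$ up to $(1+\gamma)\mu_\alpha$ this extra term is $\approx\frac{1+\gamma}{1+\eps}M$, which almost cancels your margin $M$. So the slope correction does \emph{not} push $k-j$ further below the threshold: it is the adversary the proof must beat, and it is precisely why the constant $\mu_\alpha\log\log n/(c_0\log^2 n)$ appears in the statement (each $\alpha$-class saves only about $\alpha\cdot(\dd k^*/\dd n)-1\sim\frac{\log\log n}{c_0\log^2 n}$ colours over a typical class, and whp only $(1+o(1))\mu_\alpha$ such classes are available). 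Your second quantitative claim is also off: Corollary~\ref{dnk} gives $\frac{\partial}{\partial k}L_0=c_0\log^2 n+O(\log n\log\log n)$, so the per-colour factor is $\exp\bb{(1+o(1))c_0\log^2 n}$, not $\exp\bb{(1+o(1))c_0\log^2 n/\log\log n}$.

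The argument can be repaired, and then it becomes the paper's proof: with $\gamma<\eps$ the residual gap below the threshold is still $\gtrsim\frac{\eps-\gamma}{1+\eps}M\gg\log^2 n$ uniformly in $j\le(1+\gamma)\mu_\alpha$, and multiplying by the true per-colour rate $c_0\log^2 n$ gives $E_{n-j\alpha,k-j,\beta}\le\exp\bb{-(1-o(1))(\eps-\gamma)\mu_\alpha\log\log n}$, which comfortably beats the factor $e^{(1+\gamma)\mu_\alpha}$ from $\mu_\alpha^j/j!$. (This is exactly how the paper plays $d$ against $m^+$ in \S\ref{ss:alphashift}, via \eqref{L0k*} and the derivative bound on $k^*$.) Two further points you should add: you must first discard colour classes of size $\ge\alpha+1$ (whp there are none, since $\mu_{\alpha+1}=O(\mu_\alpha\log n/n)\to0$), since your decomposition of a colouring into $j$ $\alpha$-classes plus a $\beta$-bounded colouring of the rest assumes this; and to invoke Lemma~\ref{lemsimp}/Corollary~\ref{dnk} uniformly you need $(n-j\alpha)/(k-j)<\beta$, which follows from $n/k^*\le\alpha-2/\log2<\alpha-1$ as in the paper. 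As written, however, the stated justification of the key inequality would not survive scrutiny, and your closing explanation of where $c_0$ comes from reflects the same sign confusion.
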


At this point we are ready to prove Theorem~\ref{theorem:polylog}, subject to the (in the first case lengthy) proofs of Lemmas~\ref{lem:polylog} and~\ref{lem:as1}, given in next two sections.

\begin{proof}[Proof of Theorem~\ref{theorem:polylog}]
Set $c_0=2/\log 2$, and let $c_1$ be a positive constant with $c_1<\frac{1}{5c_0^2}$. We consider the set
\begin{equation}\label{Wdef}
 W := \left\{ n\in \R\ :\  \frac{c_1n}{e\log^2 n} \le \mu_{\alpha(n)}(n) \le \frac{c_1n}{\log^2 n} \right\}.
\end{equation}
This set
is easily seen to be a disjoint union of intervals, one for each value of $\alpha(n)$. Our aim is to show the existence of at least one $n$ in each interval (apart perhaps from the first few) such that $\chi(\Gnh)$ is not too concentrated.

First, we consider the length of a single interval $I=[n^-,n^+]\subset W$. With $a=\alpha(n)$ constant (as it is over $I$), $\mu_a(n)$ is proportional to $\binom{n}{a}$, which is asymptotically $n^a/a!$. It follows easily that $n^+=(1+\eps)n^-$ for some $\eps$ such that $(1+\eps)^a\sim e$. This gives $\eps\sim 1/a\sim 1/(c_0\log n^-)$, say. Thus
\[
  n^+-n^-\sim n^-/(c_0\log n^-).
\]

We will apply Lemma~\ref{lem:framework} to each interval, with $f(n)=k^*(n)$, where $k^*(n)$ is as in Lemma~\ref{lem:polylog}. Let $\beta=\beta(n)=\alpha(n)-1$, which is constant on each interval.
By Lemma~\ref{lem:as1} and the definition of $W$, whp we have
\[
 \chi(\Gnh) \ge k_\beta(n) - (1+o(1))\mu_{\alpha(n)}(n)\frac{\log\log n}{c_0\log^2 n} \ge
 k_\beta(n) - (1+o(1)) \frac{c_1 n\log\log n}{c_0\log^4 n}.
\]
We have $\chi_\beta(\Gnh) = k_\beta(n) + o(n\log\log n/\log^4 n)$ whp by our assumption \eqref{eq:assump}.\footnote{This is the only place in the proof where we use \eqref{eq:assump}; the lemmas stated in this section do not rely on it.}
Thus, whp
\[
 k_\beta(n) - (1+o(1)) \frac{c_1 n\log\log n}{c_0\log^4 n} \le \chi(\Gnh) \le
 \chi_\beta(\Gnh) \le k_\beta(n)+\frac{c_1 n\log\log n}{c_0\log^4 n}.
\]
From Lemma~\ref{lem:polylog} we have $k^*(n)-k_\beta(n)=O(\log^2 n)$ for integer $n\in W$, so it follows that for $n\in I$ we have $\chi(\Gnh)\in [f(n)-\Delta,f(n)+\Delta]$ whp, for some $\Delta$ satisfying
\[
 \Delta \sim \frac{c_1 n^-\log\log n^-}{c_0\log^4 n^-}.
\]
This establishes the first condition \eqref{fra1} of Lemma~\ref{lem:framework}.\footnote{The reader may wonder why we take $f(n)=k^*(n)$ rather than $f(n)=k_\beta(n)$. The reason is that we do not know precisely enough how the latter varies.}

We set $a=\alpha(n)$, which is constant over the interval we are considering. For $n\in W$, by Lemma~\ref{lem:polylog} we have
\begin{eqnarray*}
 \frac{1}{f'(n)} &=& \alpha(n) + \frac{\log(\mu_{\alpha(n)}(n))}{\log n-\log\log n} - 1 + O(1/\log n) \\
 &=& a + \frac{\log n-2\log\log n+O(1)}{\log n-\log\log n} -1 + O(1/\log n) \\
 &=& a - \frac{\log\log n}{\log n} + O(1/\log n).
\end{eqnarray*}
Since $a\sim c_0\log n$, it follows (using $(a-\eps)^{-1}=a^{-1}(1-\eps/a)^{-1} = a^{-1}+\eps a^{-2}+\cdots$) that
\[
 f'(n) = \frac{1}{a} + (1+o(1))\frac{\log\log n}{c_0^2\log^3 n},
\]
so \eqref{fra2} holds for all $n\in I$ for some $\delta$ satisfying
\[
 \delta\sim \frac{\log\log n^-}{c_0^2\log^3 n^-}.
\]

As usual \eqref{fra3} is part of our assumption; we assume $\Gnh$ is concentrated like this and our aim is to give a lower bound on $t_n-s_n$ for some $n$.

As before, condition \eqref{fra4} follows from our coupling result, Corollary~\ref{corollary:coupling}, taking $r(n)=\floor{\sqrt{\mu_a(n)}}$, say.

Now $\Delta/\delta\sim c_0c_1n^-/\log n^-$, while $ar(n^+)$ is $O(\sqrt{n^-})$. By choice of $c_1$ we have $1/c_0>5c_0c_1$, so it follows that the inequalities in \eqref{fra5} hold for large enough $n$.

Thus Lemma~\ref{lem:framework} implies that for some $n$ in each interval (except perhaps for the first $O(1)$), we have
\[
 t_n-s_n\ge \frac{a\delta r(n)}{2} \ge (1+o(1)) c_0\log n \frac{\log\log n}{c_0^2\log^3 n} \frac{\sqrt{c_1 n}}{2\sqrt{e}\log n} \sim c_2\frac{\sqrt{n}\log\log n}{\log^3 n}
\]
where $c_2=\sqrt{c_1}/(2c_0\sqrt{e})$, and we have replaced $n^-$ by $n$ since $n\sim n^-$.
\end{proof}

\subsection{Proof of Lemma~\ref{lem:polylog}}\label{ss:polylog2}

In this section we prove Lemma~\ref{lem:polylog}. This will take some time. In principle, this is a matter of calculation, but it seems to require considerable work, and several tricks, to get the calculations to come out to the required accuracy. For the reader to refer back to later, we collect in Table~\ref{t1} some notation used in this and the next section.

\begin{table}[htb]
\begin{center}
\begin{tabular}{|c|l|}
 \hline
$E_{n,k,t}$ & Expected number of unordered $t$-bounded $k$-colourings. \\
$k_t(n)$ & Threshold where $E_{n,k,t}$ reaches $1$. \\
 \hline
$L_0(n,k,t)$ & approximation to $\log(E_{n,k,t})$ defined in \eqref{L0nktdef}\\
$\hL_0(n,k,t)$ & $\tfrac{1}{k}L_0(n,k,t)$ \\
$\tL_0(\rho,k,t)$ & defined by $\hL_0(n,k,t)=\tL_0(n/k,k,t)$ \\
 \hline
$k^*(n)$ & defined by solving $L_0=0$ (or $\hL_0=0$) \\
 \hline
\end{tabular}
\end{center}
\caption{The various functions involved in defining and approximating the $t$-bounded expectation threshold. While $L_0$ is the key approximation to $\log(E_{n,k,t})$, in different parts of the analysis it turns out to be much simpler to consider the transformed functions $\hL_0$ and $\tL_0$}
\label{t1}
\end{table}

The $t$-bounded first moment threshold $k_t(n)$ is defined in terms of $E_{n,k,t}$, the expected number of unordered $t$-bounded $k$-colourings of $\Gnh$. One key idea of the proof is to replace $E_{n,k,t}$ by a simpler quantity, and to define $k^*$ as the threshold for this simpler estimate to cross $1$. We will simplify in three simple steps, proved together in one lemma (Lemma~\ref{lemsimp} below): (i) we replace the expected number of colourings with a given profile (see below) by a simpler formula, (ii) we replace the sum over profiles by a maximum, and (iii) we replace the maximum over integer-valued profiles (a complicated set) by the maximum over a certain region in $\R^t$.

To state and prove the lemma we need some notation. Let $\pi=(n_i)_{i=1}^{t}$ denote a $t$-bounded profile, where $n_i$ represents the number of colour classes with $i$ vertices. Let $P_{n,k,t}$ denote the set of all profiles $\pi$ satisfying
\begin{equation}\label{constraint}
 n_i\ge 0,\qquad  \sum_{i=1}^t n_i=k  \qquad \text{and}\qquad \sum_{i=1}^t i n_i = n.
\end{equation}
Thus $P_{n,k,t}$ consists of all profiles corresponding to $t$-bounded $k$-colourings. Extending to real values, given positive reals $k<n$ and a positive integer $t$, let
\[
 P^0_{n,k,t} = \bigl\{\  (n_i)_{i=1}^{t} \in \R^t : \text{\eqref{constraint} holds}\ \bigr\}.
\]
Two key quantities appearing in many places in our calculation will be
\begin{equation}\label{didef}
  d_i := 2^{\binom{i}{2}}i!
\end{equation}
and
\begin{equation}\label{L0nktdef}
 L_0(n,k,t):=\sup_{\pi \in P^0_{n,k,t}}\left\{ n\log n -n +k -\sum_{i=1}^t n_i\log(n_id_i) \right\}.
\end{equation}
As we now show, the latter is a good approximation to $\log(E_{n,k,t})$, where $E_{n,k,t}$ is the expected number of unordered $t$-bounded $k$-colourings of $\Gnh$ (see Definition~\ref{def:tbdd}).

\begin{lemma}\label{lemsimp}
Suppose that $t=t(n)=O(\log n)$. For all (large enough) $n$ and for all $k$ with $1<n/k<t$ 
we have
\[
 \log(E_{n,k,t}) = L_0(n,k,t) + O(\log^4 n).
\]
\end{lemma}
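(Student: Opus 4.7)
The plan is to expand $E_{n,k,t}$ as a sum over profiles, apply Stirling's formula to each term, and then replace the sum by its maximum and the maximum over integer profiles by the continuous supremum, controlling the error at each step.

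First, I would write
\[
 E_{n,k,t} = \sum_{\pi \in P_{n,k,t}} \frac{n!}{\prod_{i=1}^{t} n_i!\, \prod_{i=1}^{t} d_i^{n_i}},
\]
obtained by counting unordered partitions of $[n]$ with a given profile $\pi$ and multiplying by $2^{-\sum_i n_i \binom{i}{2}}$, the probability that all within-class pairs are non-edges in $\Gnh$. Taking logs of a single term, applying Stirling ($\log m! = m \log m - m + O(\log m)$ for $m \ge 1$, $\log 0! = 0$, with the convention $0 \log 0 = 0$), and noting that at most $t$ indices $i$ have $n_i \ge 1$, gives
\[
 \log\frac{n!}{\prod_i n_i!\,\prod_i d_i^{n_i}} = n\log n - n + k - \sum_{i=1}^{t} n_i \log(n_i d_i) + O(t\log n),
\]
uniformly in $\pi \in P_{n,k,t}$. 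Since $t = O(\log n)$, the error is $O(\log^2 n)$.

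Next, I would bound the number of profiles: since each $n_i \in \{0, 1, \dots, k\}$, we have $|P_{n,k,t}| \le (k+1)^t \le n^{O(\log n)}$, so $\log|P_{n,k,t}| = O(\log^2 n)$. Sandwiching $\log E_{n,k,t}$ between the log of the maximum term and the log of $|P_{n,k,t}|$ times that maximum then yields
\[
 \log E_{n,k,t} = \max_{\pi \in P_{n,k,t}}\left\{ n\log n - n + k - \sum_i n_i \log(n_i d_i) \right\} + O(\log^2 n).
\]

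Finally, I would compare this discrete maximum to the continuous supremum $L_0(n,k,t)$. One direction is trivial since $P_{n,k,t} \subseteq P^0_{n,k,t}$. For the reverse, pick a continuous maximiser $\pi^* \in P^0_{n,k,t}$ (which exists by compactness of the feasible region and continuity of the target using $0 \log 0 = 0$), and round $\pi^*$ to an integer profile $\pi \in P_{n,k,t}$ by first setting $\tilde n_i := \lfloor n_i^* \rfloor$ and then repairing the two linear constraints $\sum n_i = k$, $\sum i n_i = n$ via integer increments of total size $O(t^2)$ spread over two carefully chosen coordinates (where $n_i^*$ is not too small). This produces a profile with $|n_i - n_i^*| \le 1$ for all but $O(1)$ indices, on which the perturbation is $O(t^2)$. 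Using $\log d_i = O(i^2) = O(\log^2 n)$ and $|x\log x - y\log y| = O(|x-y|\log n + 1)$ for $x, y \in [0, n]$, a direct estimate shows the target functional changes by at most $O(t^3) + O(t^4) = O(\log^4 n)$ between $\pi^*$ and $\pi$. Combining with the previous steps gives $\log E_{n,k,t} = L_0(n,k,t) + O(\log^4 n)$. The main obstacle is this rounding step: one must simultaneously satisfy both equality constraints, ensure $n_i \ge 0$ throughout, and keep the perturbation polylogarithmic. The key auxiliary observation is that the Lagrangian form $n_i^* d_i = e^{\lambda - 1 + i\mu}$ (coming from the stationarity conditions of the continuous optimisation) ensures many coordinates have $n_i^* \gg t^2$, leaving enough room to place the repair increments without hitting the boundary.
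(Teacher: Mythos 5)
Your argument follows the paper's route up to the final step: the profile expansion of $E_{n,k,t}$, the Stirling estimate with error $O(\log^2 n)$ per profile, and the $\exp(O(\log^2 n))$ bound on the number of profiles, which together reduce the problem to comparing the integer maximum with the continuous supremum $L_0$; your error accounting (changes of total size $O(t^2)$ at the repaired coordinates, times $O(\log n+\log d_i)=O(\log^2 n)$ per unit change, giving $O(\log^4 n)$) is also sound. The gap is in the rounding/repair step. Your justification that the repair increments can be placed is the claim that the Lagrangian form $n_i^* d_i = e^{\lambda-1+i\mu}$ forces ``many coordinates'' to satisfy $n_i^*\gg t^2$. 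This is not true throughout the range $1<n/k<t$ allowed by the lemma: if $\rho=n/k=1+\eps$ with $\eps$ tiny, the constraint $\sum_{i\ge 2}(i-1)n_i^*=k\eps$ forces $n_i^*\le k\eps$ for every $i\ge 2$, which can be far below $1$, so only the single coordinate $i=1$ is large. Moreover, with only two repair coordinates you must also address integrality (the $2\times 2$ linear system for the increments has integer solutions automatically only when the two coordinates are adjacent, or a divisibility condition holds) and the signs of the increments, since a negative increment must land on a coordinate whose value is at least its magnitude; requiring \emph{both} chosen coordinates to be large, as you do, is exactly what can fail.

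These defects are repairable. For instance, the modal coordinate $j$ of the maximiser always has $n_j^*\ge k/t\gg t^2$, and writing $\Delta_0,\Delta_1$ for the deficits in $\sum n_i$ and $\sum i n_i$ after rounding down (so $0\le\Delta_0\le\Delta_1\le t\Delta_0\le t^2$), one checks that one of the adjacent pairs $(j-1,j)$ or $(j,j+1)$ yields integer increments in which only the huge coordinate $j$ can receive a negative increment. Alternatively --- and this is what the paper does --- one can avoid any lower bound on the $n_i^*$ entirely: round each $n_i^*$ up or down so that $\sum n_i=k$ is exactly preserved, then repair $\sum i n_i$ by at most $t^2$ unit moves between adjacent sizes, each changing the functional by $O(\log^2 n)$; this needs only the trivial observation that a suitable unit move is always available, and gives the same $O(t^2\log^2 n)=O(\log^4 n)$ bound while also working for an arbitrary point of $P^0_{n,k,t}$ rather than just the maximiser.
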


\begin{proof}
For a given profile $\pi$, let $E_{\pi}$ be the expected number of unordered colourings with this profile, so by definition
\[
 E_{n,k,t} = \sum_{\pi\in P_{n,k,t}} E_{\pi}.
\]
Since the order of the parts does not matter, there are
\[
 \frac{1}{\prod_i n_i!} \frac{n!}{\prod_i i!^{n_i}}
\]
ways to partition $[n]$ into $k$ parts with $n_i$ of size $i$ for each $i$ (the second fraction is the relevant multinomial coefficient). Such a partition is indeed an unordered $k$-colouring if and only if there are no edges of $\Gnh$ within the parts. Hence
\[
 E_{\pi} =  \frac{n!}{\prod_i n_i!} \frac{1}{\prod_i i!^{n_i}} 2^{-\sum_i n_i \binom{i}{2}}  =  \frac{n!}{\prod_i n_i!} \prod_i d_i^{-n_i}.
\]

Let
\[
 L_\pi := n\log n - \sum_i n_i \log(n_id_i) -n +k.
\]
Then using Stirling's formula it is easy to see that for any $\pi\in P_{n,k,t}$ we have
\[
 \log(E_{\pi}) = L_{\pi}+O(\log^2 n).
\]
Indeed, this follows by absorbing the (logarithm of) all $\sqrt{2\pi m}$ factors into the error term.

There are at most $(n+1)^t=\exp(O(\log^2n))$ possible profiles, so $E_{n,k,t}$ is within this factor of $\max_\pi E_{\pi}$. Hence
\begin{equation}\label{lnEb1}
 \log(E_{n,k,t}) = \max_{\pi\in P_{n,k,t}} L_\pi + O(\log^2 n).
\end{equation}

Now $P_{n,k,t}\subset P^0_{n,k,t}$, so the inequality $\max_{\pi\in P_{n,k,t}} L_\pi\le L_0(n,k,t)$ holds trivially. It remains to show the reverse inequality, up to a small error term. For this, let $\pi=(n_i)_{i=1}^t\in P^0_{n,k,t}$ be arbitrary. Our aim is to find a profile $\pi'\in P_{n,k,t}$ with $L_{\pi'}$ not too far from $L_\pi$.
To do so, we modify $\pi$ in a series of small steps. Firstly, round each (non-integer) $n_i$ either up or down to the nearest integer, choosing whether to round up or down in such a way that after all such roundings $\sum n_i$ is unchanged. At this point, $\sum i n_i$ has changed by no more than $\sum_{i=1}^t i\le t^2$. We obtain $\pi'$ by making a number of further changes, each of which consists of altering the size of one class by $1$, i.e., decreasing some $n_i$ by $1$ and increasing either $n_{i-1}$ or $n_{i+1}$ by $1$; clearly we can fix the error in $\sum in_i$ by at most $t^2$ such changes. In total, we have made $O(t^2)$ small changes, each of which consists of altering a single value $n_i$ by at most $1$. 

Now each $d_i$ is at most $2^{t^2}t! = \exp(O(t^2))$. Also
\[
 \frac{\dd}{\dd n_i} n_i\log(n_id_i) = \log(n_id_i)+1,
\]
which is thus $O(\log n+t^2)=O(\log^2 n)$ for $1\le n_i\le n$. It is easy to check that $n_i\log(n_id_i)$ is $O(\log^2 n)$ for $0\le n_i\le 1$. It follows that each of the changes above (changing a single $n_i$ by at most $1$) changes $\sum n_i\log(n_id_i)$ by at most $O(\log^2 n)$. The remaining terms in $L_\pi$ are the same for $\pi'$ as for $\pi$, so we conclude that
\[
 |L_\pi-L_{\pi'}| = O(t^2 \log^2 n) = O(\log^4 n).
\]
Hence $L_0$ is within $O(\log^4 n)$ of the maximum over (integer) profiles $\pi$, which, combined with \eqref{lnEb1}, gives the result.
\end{proof}

At this point it will be convenient to rescale in two ways: we replace each $n_i$ by $p_i=n_i/k$, the fraction of colour-classes having size $i$ (at least, this is the interpretation when $n_i$ is an integer). We will also divide the logarithm we are considering by $k$. To formalize this, for $t$ a positive integer and $\rho$ a real number with $1<\rho<t$ define
\[
 \tP_{\rho,t} = \left\{\ (p_i)_{i=1}^t \in [0,1]^t\ :\ \sum_i p_i = 1\text{\quad and\quad} \sum_i i p_i = \rho\ \right\}.
\]
When $\rho=n/k$, this is exactly the set $P^0_{n,k,t}$ rescaled by replacing each $n_i$ by $p_i=n_i/k$.
Note that $\tP_{\rho,t}$ is simply the set of probability distributions (or probability mass functions) on $[t]$ with expectation $\rho$.

Let $t$ be a positive integer, and $\rho$ and $k$ positive reals with $1<\rho<t$. For $\bp=(p_i)_{i=1}^t \in \tP_{\rho,t}$, let
\begin{equation}\label{tLdef}
 \tL(\rho,k,\bp) := \rho\log(\rho k)-\log k -\rho +1 -\sum_i p_i\log(p_id_i), 
\end{equation}
and define
\begin{equation}\label{tL0def}
 \tL_0(\rho,k,t) := \sup_{\bp \in \tP_{\rho,t}} \tL(\rho,k,\bp).
\end{equation}
\begin{lemma}\label{lscale}
If $t$ is a positive integer and $n$ and $k$ are positive reals with $1<n/k<t$ 
then
\[
 L_0(n,k,t) = k \tL_0(\rho,k,t),
\]
where $\rho=n/k$.
\end{lemma}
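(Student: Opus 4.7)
The proof plan is a straightforward substitution, namely the rescaling $p_i = n_i/k$ that identifies the feasible set $P^0_{n,k,t}$ with $\tP_{\rho,t}$ when $\rho = n/k$, and then an algebraic simplification of the objective. There is no real obstacle; the only thing to watch is keeping the $\log k$ terms straight.

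Concretely, I would first note the bijection. Given $(n_i)\in P^0_{n,k,t}$, setting $p_i = n_i/k$ produces nonnegative reals with $\sum p_i = 1$ and $\sum i p_i = n/k = \rho$, i.e.\ an element of $\tP_{\rho,t}$; and conversely multiplying a $\bp\in\tP_{\rho,t}$ by $k$ gives back a tuple in $P^0_{n,k,t}$. So taking the supremum over one set is the same as taking the supremum over the other under this change of variables.

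Next I would compute the value of the objective in $L_0$ after the substitution $n_i = kp_i$. Using $n = k\rho$,
\begin{align*}
 n\log n - n + k - \sum_i n_i \log(n_i d_i)
 &= k\rho\log(k\rho) - k\rho + k - \sum_i k p_i \bb{\log k + \log(p_i d_i)} \\
 &= k\rho\log(k\rho) - k\rho + k - k\log k - k\sum_i p_i \log(p_i d_i),
\end{align*}
where I used $\sum_i p_i = 1$ to pull out $\log k$. Factoring out $k$, the bracketed expression is exactly $\tL(\rho,k,\bp)$ as defined just before the lemma.

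Taking the supremum of both sides over the (identified) feasible sets, and using that $k>0$ so the supremum commutes with multiplication by $k$, yields $L_0(n,k,t) = k\tL_0(\rho,k,t)$, as claimed. \qedsymbol
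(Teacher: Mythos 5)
Your proposal is correct and is essentially the paper's own proof: both use the rescaling $p_i=n_i/k$ to identify $P^0_{n,k,t}$ with $\tP_{\rho,t}$, pull out the $\log k$ term via $\sum_i p_i=1$, and observe that the objective becomes $k\tL(\rho,k,\bp)$, so the suprema agree after factoring out $k>0$. No issues.
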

\begin{proof}
This is simply a matter of rescaling: for $\pi\in P^0_{n,k,t}$, letting $p_i=n_i/k$ we have
\begin{multline*}
 \frac{L_\pi}{k} = \frac{n}{k}\log n -\sum_i \frac{n_i}{k}\log(n_id_i) -\frac{n}{k}+1
  = \rho\log(\rho k)-\sum_i p_i\log(kp_id_i) - \rho + 1  \\
 = \rho\log(\rho k)-\sum_i p_i\log(p_id_i) -\log k - \rho + 1,
\end{multline*}
since $\sum p_i=1$. The result follows from the bijection between $P^0_{n,k,t}$ and $\tP_{\rho,t}$ given by $p_i=n_i/k$.
\end{proof}

\begin{corollary}\label{cEnk}
Suppose that $t=t(n)=O(\log n)$. For all (large enough) $n$ and for all $k$ with $1<k<n/t$ we have
\[
 \log(E_{n,k,t}) = k \tL_0(\rho,k,t) + O(\log^4 n),
\]
where $\rho=n/k$.
\end{corollary}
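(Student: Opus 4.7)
The corollary is set up to fall out immediately from the two preceding lemmas; my plan is simply to verify that their hypotheses are compatible and then chain them. First I would check that the hypotheses match: under $t=O(\log n)$ (so in particular $t$ is a positive integer for all large $n$) and $k$ in the relevant range, the ratio $\rho=n/k$ lies strictly between $1$ and $t$, which is exactly the condition required by both Lemma~\ref{lemsimp} and Lemma~\ref{lscale}. (In particular we need $\rho<t$ so that the constraint set $\widetilde P_{\rho,t}$ is non-empty, i.e., there is at least one $t$-bounded profile of the right total.)

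Second, I would apply Lemma~\ref{lemsimp} to obtain
\[
 \log(E_{n,k,t}) = L_0(n,k,t) + O(\log^4 n),
\]
noting that its proof already absorbs all Stirling corrections, the loss from bounding the sum over profiles by a maximum, and the loss from relaxing the integer constraint; none of this needs to be redone here.

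Third, I would invoke Lemma~\ref{lscale} to rewrite $L_0(n,k,t)=k\,\widetilde L_0(\rho,k,t)$, which is just the rescaling $p_i=n_i/k$ applied to the variational problem defining $L_0$. Chaining these two identities yields
\[
 \log(E_{n,k,t}) = k\,\widetilde L_0(\rho,k,t) + O(\log^4 n),
\]
which is the claimed bound.

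There is essentially no obstacle: all of the work (the Stirling approximation, the discrete-to-continuous passage, and the rescaling) is already discharged in the two cited lemmas. The only point worth double-checking is the stated range of $k$ in the corollary (the hypothesis $1<k<n/t$ should be read as ensuring that $\rho=n/k\in(1,t)$, matching the hypotheses of the cited lemmas); if one wishes, one can state the corollary under the explicit condition $1<\rho<t$, which is what the proof uses.
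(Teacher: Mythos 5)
Your proposal is exactly the paper's proof: the corollary is obtained by chaining Lemma~\ref{lemsimp} with the rescaling identity of Lemma~\ref{lscale}, with no further work needed. Your remark about the range of $k$ is also well taken --- the stated condition $1<k<n/t$ is evidently a slip for $1<n/k<t$ (equivalently $\rho\in(1,t)$), which is the hypothesis of both cited lemmas and the regime in which the corollary is later applied.
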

\begin{proof}
Immediate from Lemmas~\ref{lemsimp} and~\ref{lscale}.
\end{proof}

In the next few lemmas our aim is to study the functions $\tL(\rho,k,\bp)$ and $\tL_0(\rho,k,t)$ defined in \eqref{tLdef} and \eqref{tL0def}. Although, as in Corollary~\ref{cEnk}, we will eventually evaluate $\tL_0$ at $(n/k,k,t)$, where $n$ and $k$ are integers, for the moment this is irrelevant. We are simply studying the functions defined in \eqref{tLdef} and \eqref{tL0def}, where $\rho$ and $k$ are real inputs, $t$ is an integer, and $\bp$ is a vector in $\tP_{\rho,t}$. In particular, $n$ appears nowhere in these definitions.

We start by studying the location and value of the maximum of $\tL(\rho,k,\bp)$ over $\bp$.
\begin{lemma}\label{lem:tLmax}
Let $1<\rho<t$, where $t$ is an integer. Then, for any real $k>1$, there is a unique $\bp=\bp_{\rho,t}\in \tP_{\rho,t}$ maximizing $\tL(\rho,k,\bp)$. This maximizing $\bp$ is independent of $k$, and is given by
\begin{equation}\label{pixy}
 p_i = e^{x+iy} d_i^{-1}
\end{equation}
for $1\le i\le t$, where $x=x_t(\rho)$ and $y=y_t(\rho)$ satisfy
\begin{equation}\label{xy1}
 \sum_{i=1}^t e^{x+iy} d_i^{-1} =1
\end{equation}
and
\begin{equation}\label{xy2}
 \sum_{i=1}^t i e^{x+iy} d_i^{-1} = \rho.
\end{equation}
Furthermore,
\begin{equation}\label{tLxy}
 \tL_0(\rho,k,t) = \rho\log(\rho k)-\log k-\rho+1-x-\rho y.
\end{equation}
\end{lemma}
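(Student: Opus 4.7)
The function $\tL(\rho,k,\bp)$ splits into a $\bp$-independent part $\rho\log(\rho k)-\log k-\rho+1$ and the functional $H(\bp):=-\sum_{i=1}^t p_i\log(p_id_i)$, which is strictly concave on the open simplex (its Hessian is diagonal with entries $-1/p_i<0$). The constraint set $\tP_{\rho,t}$ is a compact convex polytope, and it is non-empty when $1<\rho<t$, as witnessed by the distribution putting mass $(t-\rho)/(t-1)$ on $1$ and $(\rho-1)/(t-1)$ on $t$. Strict concavity therefore guarantees a unique maximizer. I claim this maximizer lies in the relative interior $\{p_i>0\text{ for all }i\}$: near any facet $\{p_i=0\}$ the partial $\partial H/\partial p_i=-\log(p_id_i)-1$ blows up to $+\infty$, so moving into the interior strictly increases $\tL$, and such a move is feasible because the relative interior of $\tP_{\rho,t}$ is non-empty.

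Since the maximizer is interior, Lagrange multipliers apply cleanly. Introducing $\lambda_1,\lambda_2$ for the constraints $\sum_i p_i=1$ and $\sum_i ip_i=\rho$, the stationarity condition reads
\[
-\log(p_id_i)-1-\lambda_1-i\lambda_2=0,
\]
which rearranges to $p_i=e^{x+iy}/d_i$ with $x:=-1-\lambda_1$ and $y:=-\lambda_2$. This is exactly \eqref{pixy}, and the two constraints become \eqref{xy1}--\eqref{xy2}. To evaluate $\tL_0$, substitute $\log(p_id_i)=x+iy$ into $H$:
\[
-\sum_i p_i\log(p_id_i)=-x\sum_i p_i-y\sum_i ip_i=-x-\rho y,
\]
and add the $\bp$-independent part to obtain \eqref{tLxy}.

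It then remains to exhibit a solution $(x,y)\in\R^2$ to \eqref{xy1}--\eqref{xy2} for every $\rho\in(1,t)$; uniqueness follows automatically because $\bp$ is unique and the map $(x,y)\mapsto\bp$ is injective (reading off $x+iy=\log(p_id_i)$). For each $y\in\R$ define $x(y):=-\log\sum_{j=1}^t e^{jy}/d_j$ so that \eqref{xy1} holds identically, and let $m(y):=\sum_i ie^{x(y)+iy}/d_i$. A direct computation gives $m'(y)=\Var_{\bp(y)}(i)>0$ at every finite $y$ (the Gibbs-type distribution is never a point mass when $t\ge 2$), while $m(y)\to 1$ as $y\to-\infty$ and $m(y)\to t$ as $y\to+\infty$ (the mass concentrates at the smallest and largest index respectively). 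The intermediate value theorem then produces a unique $y\in\R$ with $m(y)=\rho$, and the corresponding $x=x(y)$ completes the pair.

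The only delicate step is the interior-maximizer claim, which is what justifies writing $p_i=e^{x+iy}/d_i$ with finite $x,y$ (rather than having to handle $p_i=0$ separately); the rest is routine convex analysis plus a one-parameter monotonicity argument for $m(y)$.
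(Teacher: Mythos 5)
Your proposal is correct and follows essentially the same route as the paper: isolate the concave functional $-\sum_i p_i\log(p_id_i)$, get existence and uniqueness of the maximizer from compactness and strict concavity, push it into the relative interior using the blow-up of $-\log(p_id_i)-1$ at $p_i=0$, apply Lagrange multipliers to obtain \eqref{pixy}--\eqref{xy2}, and substitute to get \eqref{tLxy}. Your closing monotonicity/IVT argument for solving \eqref{xy1}--\eqref{xy2} in $(x,y)$ is not needed for the lemma itself (the multipliers already supply such a pair), and it reproduces the paper's remark immediately after the lemma.
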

\begin{proof}
Throughout the proof $k$, $t$ and $\rho$ are fixed, and we are maximizing only over $\bp\in \tP_{\rho,t}$. Thus, the only term in $\tL(\rho,k,\bp)$ that varies is the term 
\[
 f(\bp) = \sum_{i=1}^t -p_i\log(p_id_i).
\]
Note that $k$ does not appear in this expression. In contrast, $\rho$ appears implicitly via the constraint $\sum ip_i=\rho$. Hence the location of the maximum will depend on $\rho$ and $t$, but not on~$k$.

Now $-x\log(xd)$ is strictly concave as a function of $x$, so viewed as a function on $[0,1]^t$, $f(\bp)$ is a sum of concave functions and hence concave. It is thus concave also on the domain $\tP_{\rho,t}$. Thus $f(\bp)$, and hence $\tL(\rho,k,\bp)$, has a unique maximizer $\bp$. This maximizer lies in the interior of $\tP_{\rho,t}$, since the derivative of $-x\log(xd)$, namely $-\log(xd)-1$, approaches infinity as $x$ approaches~$0$.\footnote{To spell this out completely, suppose that at the maximum some $p_i=0$. To obtain a contradiction it suffices to find a direction that we can move within $\tP_{\rho,t}$ in which $p_i$ increases. Then for a small enough change in this direction, the increase in the term $-p_i\log(p_id_i)$ will outweigh the decrease in any other terms. Such a direction exists, because $\tP_{\rho,t}$ certainly contains a point $\bp'$ with $p_i'>0$, so we may choose the direction from $\bp$ to $\bp'$.}

The second statement now follows easily by the method of Lagrange multipliers, viewing $f(\bp)$ as a function on $[0,1]^t$, which we wish to maximize subject to the constraints
\begin{equation}\label{pconstr}
 \sum p_i=1\text{\quad and\quad} \sum ip_i=\rho.
\end{equation}
Indeed, we have
\[
 \frac{\partial f}{\partial p_i} = -\log(p_id_i)-1,
\]
so at the maximum there are $\lambda$ and $\mu$ such that
\[
 -\log(p_id_i)-1 = \lambda + \mu i
\]
for $1\le i\le t$. Rearranging and setting $y=-\mu$ and $x=-\lambda-1$ gives \eqref{pixy}.
The relations \eqref{xy1} and \eqref{xy2} follow immediately from the constraints \eqref{pconstr}.

Finally, to obtain \eqref{tLxy} we substitute \eqref{pixy} into the definition of $\tL_0$, noting that for this specific $\bp$ we have
\[
 \sum p_i\log(p_id_i) = \sum p_i(x+iy) = x + \rho y,
\]
again using \eqref{pconstr}.
\end{proof}

It is easy to see that, for a given integer $t$, \eqref{xy1} and \eqref{xy2} define $x$ and $y$ uniquely as functions of $\rho$ (where $1<\rho<t$), and furthermore that these functions $x(\rho)=x_t(\rho)$ and $y(\rho)=y_t(\rho)$ are (infinitely) differentiable. Indeed, dividing \eqref{xy2} by \eqref{xy1} gives
\[
 \frac{ \sum_{i=1}^t i e^{iy}d_i^{-1} } { \sum_{i=1}^t e^{iy}d_i^{-1} }  = \rho.
\]
The left-hand side is strictly increasing and (infinitely) differentiable as a function of $y$, and tends to $1$ or to $t$ as $y$ tends to $-\infty$ or $+\infty$, respectively. Having solved this equation to determine $y_t(\rho)$, we may use \eqref{xy1}, say, to find $x_t(\rho)$.

We next investigate the derivatives of $L_0$.

\begin{lemma}\label{dLrk}
For $t$ fixed the $2$-variable function $\tL_0(\rho,k,t)$ has partial derivatives
\[
 \frac{\partial}{\partial k} \tL_0(\rho,k,t) = \frac{\rho-1}{k}
\text{\quad and\quad}
 \frac{\partial}{\partial\rho} \tL_0(\rho,k,t) = \log(\rho k)-y_t(\rho).
\]
\end{lemma}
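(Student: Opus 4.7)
The plan is to compute both partial derivatives directly from the explicit formula~\eqref{tLxy} for $\tL_0$, using that $x=x_t(\rho)$ and $y=y_t(\rho)$ depend on $\rho$ only (not on $k$).

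First I would dispatch the $k$-derivative. Since $x_t$ and $y_t$ are determined by \eqref{xy1} and \eqref{xy2}, neither of which involves $k$, these two functions are independent of $k$. Hence differentiating \eqref{tLxy} with respect to $k$ only affects the terms $\rho\log(\rho k)$ and $-\log k$, giving
\[
\frac{\partial \tL_0}{\partial k} \;=\; \frac{\rho}{k} - \frac{1}{k} \;=\; \frac{\rho-1}{k}.
\]

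The $\rho$-derivative is more delicate, because $x$ and $y$ move with $\rho$. Writing $x'=x_t'(\rho)$ and $y'=y_t'(\rho)$, differentiating \eqref{tLxy} with respect to $\rho$ gives
\[
\frac{\partial \tL_0}{\partial \rho} \;=\; \log(\rho k)+1 - 1 - x' - y - \rho y' \;=\; \log(\rho k) - y - \bigl(x' + \rho y'\bigr).
\]
So it remains to show the identity $x'(\rho) + \rho\, y'(\rho) = 0$. This is where the constraint \eqref{xy1} does the work: differentiating the identity $\sum_i e^{x(\rho)+iy(\rho)}d_i^{-1} = 1$ in $\rho$ yields
\[
\sum_{i=1}^t \bigl(x' + i y'\bigr)\, e^{x+iy} d_i^{-1} \;=\; 0,
\]
and then splitting the sum and substituting \eqref{xy1} and \eqref{xy2} gives $x'\cdot 1 + y'\cdot \rho = 0$, as required. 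Plugging back in yields $\partial \tL_0/\partial\rho = \log(\rho k) - y_t(\rho)$.

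There is no real obstacle here, since this is essentially an instance of the envelope theorem: the maximizer $\bp^*$ in \eqref{tL0def} satisfies the first-order conditions, so the $\rho$-derivative of $\tL_0$ picks up only the explicit $\rho$-dependence of $\tL(\rho,k,\bp)$ at $\bp=\bp^*$, not the implicit dependence through $\bp^*$. The one slightly subtle point is that the constraint set $\tP_{\rho,t}$ itself depends on $\rho$, but this is handled precisely by the Lagrange multiplier $y$, which is why $-y$ (rather than $0$) appears in the final expression for $\partial\tL_0/\partial\rho$. I would present the computation directly, as above, rather than invoking the envelope theorem abstractly, since the direct check is only a few lines.
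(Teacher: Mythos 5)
Your proposal is correct and follows essentially the same route as the paper: the $k$-derivative is read off directly from \eqref{tLxy} since $x_t(\rho)$ and $y_t(\rho)$ do not involve $k$, and the $\rho$-derivative reduces to the identity $x_t'(\rho)+\rho\,y_t'(\rho)=0$, obtained by differentiating the constraint \eqref{xy1} and simplifying via \eqref{xy1} and \eqref{xy2}. The envelope-theorem remark is a nice interpretation but, as you say, the direct computation is what the paper does.
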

\begin{proof}
We use \eqref{tLxy}, recalling that with $t$ fixed $x=x_t(\rho)$ and $y=y_t(\rho)$ depend only on $\rho$, not on $k$. The formula for the $k$-derivative is immediate (since then $x$, $y$ and $\rho$ are constants). For the $\rho$-derivative by elementary calculus we have
\[
  \frac{\partial}{\partial\rho} \tL_0(\rho,k,t) = \log(\rho k) - \frac{\dd}{\dd\rho}(x+\rho y)
 = \log(\rho k) - \frac{\dd x_t(\rho)}{\dd\rho} - \rho\frac{\dd y_t(\rho)}{\dd\rho} -y_t(\rho).
\]
At this point something miraculous-seeming happens: if we differentiate the constraint \eqref{xy1} with respect to $\rho$ we obtain
\[
 \sum_{i=1}^t \left( \frac{\dd x_t(\rho)}{\dd\rho} + i \frac{\dd y_t(\rho)}{\dd\rho} \right ) e^{x_t(\rho)+iy_t(\rho)}d_i^{-1} =0,
\]
which, using \eqref{xy1} and \eqref{xy2}, simplifies to
\[
 \frac{\dd x_t(\rho)}{\dd\rho} + \rho \frac{\dd y_t(\rho)}{\dd\rho} =0.
\]
Combined with the formula above, this gives the result.
\end{proof}

So far, it was convenient to work in terms of $\rho$ and $k$ rather than $n$ and $k$, because certain key functions then depended only on $\rho$. However, in the end we wish to find a threshold $k^*$ as a function of $n$, so we now undo this change of variables.
Noting/recalling that the definitions \eqref{tL0def} and \eqref{L0nktdef} of $\tL_0(\rho,k,t)$ and $L_0(n,k,t)$ do not require $n$ and $k$ to be integers, for $t$ a positive integer and $n$ and $k$ positive reals with $1<n/k<t$, define
\begin{equation}\label{Lnkdef}
 \hL_0(n,k,t) := \tL_0(n/k,k,t),
\end{equation}
so, by Lemma~\ref{lscale},
\begin{equation}\label{L0hL0}
 L_0(n,k,t) = k  \tL_0(n/k,k,t) = k \hL_0(n,k,t).
\end{equation}

\begin{lemma}\label{dLnk}
For $t$ fixed the $2$-variable function $\hL_0(n,k,t)$ has partial derivatives
\[
 \frac{\partial}{\partial k} \hL_0(n,k,t) = -\frac{n}{k^2}(\log n-y_t(n/k)) + \frac{n}{k^2}-\frac{1}{k}
\text{\quad and\quad}
 \frac{\partial}{\partial n} \hL_0(n,k,t) = \frac{\log n-y_t(n/k)}{k}.
\]
\end{lemma}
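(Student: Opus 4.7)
The plan is to compute the two partial derivatives directly via the chain rule, using the formulas for the partial derivatives of $\tL_0(\rho,k,t)$ given in Lemma~\ref{dLrk}. By definition \eqref{Lnkdef} we have $\hL_0(n,k,t) = \tL_0(n/k, k, t)$, so setting $\rho = n/k$ we view $\hL_0$ as $\tL_0$ composed with the change of variables $(n,k) \mapsto (n/k, k)$.

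First I would compute $\partial \hL_0/\partial n$. Here only the first argument of $\tL_0$ depends on $n$, with $\partial \rho/\partial n = 1/k$, so by the chain rule
\[
 \frac{\partial \hL_0}{\partial n}
 = \frac{1}{k}\cdot \frac{\partial \tL_0}{\partial \rho}(\rho,k,t)
 = \frac{\log(\rho k) - y_t(\rho)}{k}
 = \frac{\log n - y_t(n/k)}{k},
\]
using $\rho k = n$. This gives the second formula.

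For $\partial \hL_0/\partial k$, both arguments of $\tL_0$ depend on $k$: the second argument has derivative $1$, and $\rho = n/k$ has derivative $-n/k^2$. Thus
\[
 \frac{\partial \hL_0}{\partial k}
 = -\frac{n}{k^2}\cdot \frac{\partial \tL_0}{\partial \rho}(\rho,k,t) + \frac{\partial \tL_0}{\partial k}(\rho,k,t)
 = -\frac{n}{k^2}\bigl(\log(\rho k) - y_t(\rho)\bigr) + \frac{\rho-1}{k},
\]
by Lemma~\ref{dLrk}. Substituting $\rho k = n$ and $\rho/k = n/k^2$ gives
\[
 \frac{\partial \hL_0}{\partial k} = -\frac{n}{k^2}\bigl(\log n - y_t(n/k)\bigr) + \frac{n}{k^2} - \frac{1}{k},
\]
as claimed. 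There is no real obstacle here; this is a routine chain-rule computation, and the only thing to be careful about is to use the formulas from Lemma~\ref{dLrk} with $\rho$ and $k$ treated as independent, then apply the chain rule to account for the fact that after the substitution $\rho = n/k$ both arguments depend on $k$.
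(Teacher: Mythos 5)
Your proposal is correct and is essentially the same as the paper's proof: the paper also applies the chain rule to $\hL_0(n,k,t)=\tL_0(n/k,k,t)$, writes $\partial_n\hL_0=\tfrac1k\partial_\rho\tL_0$ and $\partial_k\hL_0=-\tfrac{n}{k^2}\partial_\rho\tL_0+\partial_k\tL_0$, and then substitutes the formulas from Lemma~\ref{dLrk}. Your simplification using $\rho k=n$ and $(\rho-1)/k=n/k^2-1/k$ is exactly what is needed, so there is nothing to add.
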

\begin{proof}
This is straightforward calculus: using \eqref{Lnkdef} and the Chain Rule we have
\[
 \frac{\partial}{\partial k} \hL_0 = -\frac{n}{k^2} \frac{\partial}{\partial\rho} \tL_0 +  \frac{\partial}{\partial k} \tL_0,
\]
and
\[
 \frac{\partial}{\partial n} \hL_0 = \frac{1}{k} \frac{\partial}{\partial\rho} \tL_0.
\]
The result thus follows from Lemma~\ref{dLrk}.
\end{proof}

Our next aim is to find the value of $y$; it turns out that a fairly crude bound is enough, and for this we can use a `soft' argument, rather than trying to exactly solve the constraints \eqref{xy1} and \eqref{xy2}.
\begin{lemma}\label{lem:ybrho}
Suppose that $\rho=t-\Theta(1)$ and $\rho\ge 2$.\footnote{This condition will be irrelevant in the end; $\rho$ and $t$ will be order $\log n$. It's needed only to rule out values of $\rho$ very close to $1$.} Then
\[
 y_t(\rho) = \log\left(t2^t\right) + O(1).
\]
\end{lemma}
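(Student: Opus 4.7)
The plan is to substitute $y = \log(t 2^t) + c$ and analyze the resulting mean $\rho^*(y) := \sum_i i p_i$ of the probability distribution $\bp = (p_i)$ defined by \eqref{pixy}, \eqref{xy1}, \eqref{xy2} as a function of $c$ for fixed $t$. Since the paper already notes that $y \mapsto \rho^*(y)$ is a strictly increasing bijection onto $(1,t)$, it suffices to show that $\rho^*(\log(t 2^t) + c) = t - r(c) + O(1/t)$ for some explicit continuous, strictly decreasing bijection $r: \R \to (0,\infty)$; monotonicity then pins down $y_t(\rho)$ to within $O(1)$ of $\log(t 2^t)$ whenever $t - \rho$ lies in a fixed interval $[c_1, c_2] \subset (0,\infty)$, which is exactly the hypothesis $\rho = t - \Theta(1)$.

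The key computation is the telescoping identity
\[
 \frac{p_{t-j}}{p_t} = \prod_{l=0}^{j-1} \frac{d_{t-l}}{d_{t-l-1}} e^{-y}
 = \frac{t!/(t-j)!}{t^j} \cdot 2^{-j(j+1)/2} e^{-jc},
\]
using $d_{i+1}/d_i = 2^i(i+1)$ together with $e^y = t \cdot 2^t \cdot e^c$. For $j$ up to, say, $\sqrt{t}/\log t$ we have $t!/(t-j)! = t^j(1+O(j^2/t))$, so $p_{t-j}/p_t = 2^{-j(j+1)/2} e^{-jc}(1 + o(1))$ uniformly; for larger $j$ the super-exponentially decaying factor $2^{-j(j+1)/2}$ dominates the crude upper bound $t^j \cdot 2^{-j(j+1)/2} e^{|c|j}$, rendering the tail $O(1/t)$ provided $c$ is bounded.

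Summing these estimates gives
\[
 p_t^{-1} = \sum_{j=0}^{t-1} \frac{p_{t-j}}{p_t} = Z(c) + O(1/t), \qquad p_t^{-1}(t - \rho^*(y)) = \sum_{j=0}^{t-1} j\,\frac{p_{t-j}}{p_t} = M(c) + O(1/t),
\]
where $Z(c) := \sum_{j \ge 0} 2^{-j(j+1)/2} e^{-jc}$ and $M(c) := \sum_{j \ge 0} j \cdot 2^{-j(j+1)/2} e^{-jc}$, both convergent for every $c \in \R$. Hence $t - \rho^*(y) = r(c) + O(1/t)$ with $r(c) := M(c)/Z(c)$. This is precisely the mean of the probability measure on $\N_0$ with weights proportional to $2^{-j(j+1)/2} e^{-jc}$; by the standard exponential-family identity $r'(c) = -\Var(j) < 0$, it is continuous and strictly decreasing, with $r(c) \to \infty$ as $c \to -\infty$ (weight spreads to large $j$) and $r(c) \to 0$ as $c \to +\infty$ (weight concentrates at $j = 0$). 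Thus $r: \R \to (0,\infty)$ is a continuous bijection, and for $t - \rho \in [c_1,c_2]$ its inverse at that point is a fixed bounded quantity, giving $c = O(1)$ as required.

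The main obstacle is the uniform tail bookkeeping in the intermediate regime of $j$: one must check that even the crude bound $p_{t-j}/p_t \le 2^{-j(j+1)/2} e^{|c|j}$ sums to $O(1/t)$ for $j$ beyond $\sqrt{t}/\log t$, which follows because $2^{-j^2/2}$ eventually dominates $e^{|c|j}$ for any fixed $c$. Beyond that, it is routine calculus and the monotonicity argument already available in the paper.
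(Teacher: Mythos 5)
Your proof is correct, but it takes a genuinely more explicit route than the paper's. The paper argues softly: from $r_i=p_i/p_{i-1}=e^y/(i2^{i-1})$ it notes that $(p_i)$ is unimodal and shows by contradiction that $r_t=\Theta(1)$ (if $r_t$ were large the mean would be forced too close to $t$; if small, too far below $t$, or below $2$), which yields $y_t(\rho)=\log(t2^t)+O(1)$ without ever computing the distribution. You instead set $y=\log(t2^t)+c$ and compute the mean in the limit: your telescoping identity $p_{t-j}/p_t=\frac{t!/(t-j)!}{t^j}\,2^{-j(j+1)/2}e^{-jc}$ is correct (since $d_{t-l}/d_{t-l-1}=2^{t-l-1}(t-l)$ and $e^{-y}=e^{-c}/(t2^t)$), the bookkeeping works --- the relative error is $O(j^2/t)$ for small $j$ and sums against the super-exponential weights to a total $O(1/t)$, while the tail beyond $j\approx\sqrt{t}/\log t$ is $e^{-\Omega(t/\log^2 t)}$, uniformly for $c$ in any compact set --- and the exponential-family monotonicity of $r(c)=M(c)/Z(c)$, combined with the paper's observation that the mean is strictly increasing in $y$, sandwiches $y_t(\rho)$ between $\log(t2^t)+c^-$ and $\log(t2^t)+c^+$ for fixed $c^\pm$ depending only on the constants in $t-\rho=\Theta(1)$; there is no circularity, since the expansion is only invoked at these fixed values of $c$. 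Your approach buys more than the lemma asks: it identifies the answer, $y_t(\rho)=\log(t2^t)-r^{-1}(t-\rho)+o(1)$ with error $O(1/t)$, whereas the paper's argument gives only the $O(1)$ statement but is shorter and computation-free. Two small points to tidy: state the intermediate estimate as a relative error $O(j^2/t)$ rather than ``$1+o(1)$'' so that the displayed $O(1/t)$ genuinely follows (it does), and note that your asymptotics are as $t\to\infty$, so the finitely many bounded values of $t$ permitted by the hypotheses (this is where $\rho\ge 2$ is used) should be dispatched by the one-line remark that $y_t$ is continuous on the compact admissible range of $\rho$ and hence bounded there; in the intended application $t=\Theta(\log n)$, so this case is vacuous anyway.
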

\begin{proof}
Note that $y_t(\rho)$ is defined for any positive integer $t$ and any real $\rho$ with $1<\rho<t$. The statement is that if we restrict the parameter space to $(\rho,t)$ such that $\rho\ge 2$ and $c<t-\rho<C$ for some constants $C>c>0$, then the difference between $y_t(\rho)$ and $\log(t2^t)$ is bounded. 

Fix, for the moment, $\rho$ and $t$ with $1<\rho<t$, and let $y=y_t(\rho)$.
Recall that $(p_i)_{i=1}^t$ with $p_i$ defined by \eqref{pixy} is a probability distribution on $[t]$ with mean $\rho$.

For $2\le i\le t$, from \eqref{pixy} we have
\begin{equation}\label{riform}
 r_i:= \frac{p_i}{p_{i-1}} = e^{y} \frac{d_{i-1}}{d_i} = \frac{e^y}{i 2^{i-1}},
\end{equation}
recalling the definition \eqref{didef} of $d_i$. In particular, $(r_i)$ is a decreasing function of $i$, so the sequence $(p_i)$ is unimodal. Furthermore, for $i=t-O(1)$ we have
\begin{equation}\label{riasymp}
 r_i =  e^y \Theta\left(\frac{1}{t 2^t}\right), 
\end{equation}
where the implicit constants do not depend on $t$ or $\rho$.
We claim that, uniformly over $(\rho,t)$ with $t-\rho=\Theta(1)$, we have $r_t=r_t(\rho,t)=\Theta(1)$; then \eqref{riasymp} gives the result.

To establish the claim suppose first (for a contradiction) that for fixed $c,C$ there exist $(\rho,t)$ with $c<t-\rho<C$ such that $r_t=r_t(\rho,t)$ is arbitrarily large. If $r_t\ge D$ then $r_i\ge D$ for all $2\le i\le t$, so (for large $D$) the sequence $p_i$ is rapidly increasing and the mean $\rho$ of this probability distribution is very close to $t$. We thus obtain a contradiction for some $D=D(c)$.

Next suppose that, with $c<t-\rho<C$, we may choose $\rho$ and $t$ such that $r_t$ is arbitrarily small. Since $r_i=\Theta(r_t)$ for $i\ge t-2C$, say, $r_i$ is also small (say $<1/2$) for $i\ge t-2C$. Thus $p_i$ decreases rapidly on $[t-2C,t]$. If $t\ge 2C+1$ then it follows that the mean of this probability distribution is less than $t-C$, a contradiction. If $t\le 2C+1$ then we conclude that $p_i$ decreases rapidly on the whole domain $[1,t]$, which implies that the mean is less than $2$, again contradicting our assumptions.
\end{proof}

 We also give a useful bound on $x+ty$, in a slightly more general form.
\begin{lemma}\label{lem:xay}
Suppose that $\rho\ge 2$, that $\rho=t-\Theta(1)$, and that $a=t+O(1)$ is a positive integer. Then
\[
 x_t(\rho)+a y_t(\rho) =\log(d_a)+O(1),
\]
where $d_a$ is defined in \eqref{didef}.
\end{lemma}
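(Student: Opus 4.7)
The plan is to read $x+ay$ directly off the formula $p_i=e^{x+iy}/d_i$ of \eqref{pixy}. Extend the definition by setting $\tilde p_i:=e^{x_t(\rho)+i\,y_t(\rho)}/d_i$ for every positive integer $i$, so $\tilde p_i=p_i$ for $1\le i\le t$ and the ratio identity $\tilde p_i/\tilde p_{i-1}=r_i=e^{y}/(i\,2^{i-1})$ from \eqref{riform} persists on all of $\mathbb{N}$. Taking logs gives
\[
 x_t(\rho)+a\,y_t(\rho)=\log d_a+\log\tilde p_a,
\]
so the lemma reduces to showing $\tilde p_a=\Theta(1)$.

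The essential input is Lemma~\ref{lem:ybrho}, which yields $y=\log(t2^t)+O(1)$ and hence $r_j=(t/j)\,2^{t-j+1}\,e^{O(1)}$. Since $a=t+O(1)$, this produces constants $c,C>0$ with
\[
 r_{a-l}\ge c\,2^{l}\ \ (l\ge 0)\qquad\text{and}\qquad r_{a+m}\le C\,2^{-m}\ \ (m\ge 1).
\]
Telescoping these bounds gives super-geometric decay away from $i=a$,
\[
 \frac{\tilde p_{a-k}}{\tilde p_a}\le c^{-k}\,2^{-\binom{k}{2}},\qquad \frac{\tilde p_{a+m}}{\tilde p_a}\le C^{m}\,2^{-\binom{m+1}{2}},
\]
and in particular $\sum_{i\ge 1}\tilde p_i/\tilde p_a = O(1)$.

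To finish, apply the normalization $\sum_{i=1}^{t}p_i=1$, written as
\[
 \frac{1}{\tilde p_a}=\sum_{i=1}^{t}\frac{\tilde p_i}{\tilde p_a}.
\]
The super-geometric bounds above give $1/\tilde p_a=O(1)$, so $\tilde p_a=\Omega(1)$. For the reverse: when $a\le t$, the $i=a$ term contributes exactly $1$, so $1/\tilde p_a\ge 1$; when $a>t$ (with $a-t\le O(1)$), the $i=t$ term equals $\bigl(\prod_{j=t+1}^{a}r_j\bigr)^{-1}$, which is $\Theta(1)$ by the ratio estimates, so again $1/\tilde p_a=\Omega(1)$. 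Combining, $\tilde p_a=\Theta(1)$, which is what was required.

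The main obstacle is just book-keeping: verifying that the $O(1)$ control on $y$ from Lemma~\ref{lem:ybrho} really does propagate through the telescoped products to yield honest \emph{super}-geometric tails (rather than merely geometric ones), and dealing uniformly with the minor nuisance $a>t$ when extending the definition. No new idea is needed beyond the unimodality picture already developed in the proof of Lemma~\ref{lem:ybrho}.
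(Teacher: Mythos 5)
Your proof is correct, and it shares the paper's overall skeleton: extend $p_i = e^{x+iy}d_i^{-1}$ beyond $i=t$ (the same formula \eqref{pixy}), reduce the lemma to showing that the extended $p_a$ is $\Theta(1)$, and take logarithms. Where you genuinely differ is in how that $\Theta(1)$ bound is obtained. The paper continues the argument from \emph{inside} the proof of Lemma~\ref{lem:ybrho}: it uses $r_i=\Theta(r_t)=\Theta(1)$ for $i=t\pm O(1)$ together with the mean constraint $\sum_i i p_i=\rho=t-\Theta(1)$ to force $p_t=\Theta(1)$ (if $p_t$ were small, all $p_i$ with $i=t-O(1)$ would be small and the mean could not be within $O(1)$ of $t$), and then transfers this to $p_a$ through the bounded ratios. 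You instead use only the \emph{statement} of Lemma~\ref{lem:ybrho}: from $y=\log(t2^t)+O(1)$ you get $r_j=(t/j)2^{t-j+1}e^{O(1)}$, hence super-geometric decay of $p_i/p_a$ on both sides of $a$, and then the normalization $\sum_{i=1}^t p_i=1$ pins $p_a$ between two absolute constants (the $i=a$, respectively $i=t$, term supplying the upper bound on $p_a$ in the two cases $a\le t$ and $a>t$). Your route is a bit more quantitative and self-contained: it avoids reusing internal claims of the previous proof and replaces the soft ``mean forces mass near $t$'' step by explicit tail estimates, at the cost of a little more bookkeeping (checking the telescoped products and the $a>t$ case, which you do correctly); the paper's argument is shorter but leans on $r_t=\Theta(1)$ from the earlier proof and a compactness-flavoured uniformity argument.
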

\begin{proof}
We continue the argument in the proof of the previous lemma. As shown there, defining $r_i$ as in \eqref{riform}, we have $r_i=\Theta(r_t)=\Theta(1)$ for $i=t-O(1)$. Since $(p_i)$ is a probability distribution on $[t]$ with mean $\rho$, it follows that $p_t=\Theta(1)$. Indeed, if $p_t=o(1)$ then we would have $p_i=o(1)$ for $i=t-O(1)$, contradicting that the mean $\rho$ is within $O(1)$ of $t$.

Now (purely as a notational convenience) extend the definition of $p_i$ to $i>t$ also, taking $p_i=e^{x+iy}d_i^{-1}$ as in \eqref{pixy}, with $x=x_t(\rho)$ and $y=y_t(\rho)$. Then \eqref{riform} holds for $i>t$ too, and (from this equation) we have $r_i=\Theta(r_t)$ for $i=t+O(1)$. Hence $p_a=\Theta(p_t)=\Theta(1)$. Taking logs,
\[
 \log(p_a) = x_t(\rho)+a y_t(\rho) -\log(d_a) = O(1),
\]
giving the result.
\end{proof}

We will be interested in the $\beta$-bounded chromatic number where $\beta=\alpha(n)-i=\alpha_0(n)+O(1)$, for $i=1$ (the important case for us) or $i=2$. It will turn out that the relevant values of $\rho$ (the average colour class size) are of the form $\beta-\Theta(1)$. The next corollary gives the value of $y$ in this key case.

\begin{corollary}\label{cor_y}
Suppose that $t=t(n)=\alpha_0(n)+O(1)$ is an integer. Uniformly over all $n$ and all real $\rho\ge 2$ such that $t-\rho=\Theta(1)$ we have
\begin{equation}\label{yvalue}
 y_t(\rho) = 2\log n-\log\log n + O(1).
\end{equation}
\end{corollary}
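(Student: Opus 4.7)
The plan is to apply Lemma~\ref{lem:ybrho} directly and then translate the expression $\log(t2^t)$ into the desired form using the explicit definition of $\alpha_0(n)$. Since the hypotheses of Lemma~\ref{lem:ybrho} are exactly what is assumed in Corollary~\ref{cor_y} (namely $\rho\ge 2$ and $t-\rho=\Theta(1)$), we immediately obtain
\[
 y_t(\rho) = \log(t 2^t) + O(1) = t\log 2 + \log t + O(1).
\]

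The remaining task is purely computational. Throughout Section~\ref{s:polylog} we have $p=\tfrac12$, so $b=2$ and $\log b = \log 2$. Plugging into the definition \eqref{eq:a0def},
\[
 \alpha_0(n) \log 2 = 2\log n - 2\log\log n + O(1),
\]
where the $O(1)$ absorbs the constants $2\log\log 2$, $2\log(e/2)$ and $\log 2$. Since $t = \alpha_0(n) + O(1)$, multiplying by $\log 2$ yields
\[
 t\log 2 = 2\log n - 2\log\log n + O(1).
\]
Moreover, $t = \Theta(\log n)$, so $\log t = \log\log n + O(1)$.

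Adding these two expressions gives
\[
 t\log 2 + \log t = 2\log n - 2\log\log n + \log\log n + O(1) = 2\log n - \log\log n + O(1),
\]
and substituting into the formula for $y_t(\rho)$ from Lemma~\ref{lem:ybrho} completes the proof. The main (very minor) point worth noting is the cancellation between the $-2\log\log n$ coming from $\alpha_0(n)\log 2$ and the $+\log\log n$ coming from $\log t$, which leaves the single $-\log\log n$ term in the final answer. No step poses any genuine obstacle; the result is essentially just bookkeeping on top of Lemma~\ref{lem:ybrho}.
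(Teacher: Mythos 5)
Your proof is correct and follows essentially the same route as the paper: apply Lemma~\ref{lem:ybrho} and then compute $\log(t2^t)$ from the definition \eqref{eq:a0def} of $\alpha_0$ with $b=2$, which is exactly the paper's observation that $2^t=\Theta(n^2/\log^2 n)$. The bookkeeping, including the cancellation leaving a single $-\log\log n$ term, is accurate.
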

\begin{proof}
We apply Lemma~\ref{lem:ybrho}, noting that, recalling \eqref{eq:a0def},
for $t=\alpha_0(n)+O(1)$ we have $2^t=\Theta(n^2/\log^2 n)$.
\end{proof}

Using this value of $y_t(\rho)$, and Lemma~\ref{lem:xay}, we can estimate $\hL_0$ (or $\tL_0$, which is the same function reparametrized). Recall that $\hL_0$ is defined by dividing $L_0$ (a good approximation to the logarithm of the expected number of $t$-bounded $k$-colourings) by $k$, so the $+O(1)$ error below corresponds in the end to a factor $\exp(O(k))=\exp(O(n/\log n))$.

\begin{lemma}\label{L0approx}
Suppose that $k<n$ are positive reals, and $t\le a$ are positive integers, such that $a,t=\alpha_0(n)+O(1)$ and $2\le n/k=t-\Theta(1)$.
Then
\begin{equation}\label{eq:L0approx}
 \hL_0(n,k,t) = \left(a-\rho -1-\frac{2}{\log 2}\right)(\log n-\log\log n)+\log(\mu_a(n))+O(1),
\end{equation}
where $\rho=n/k$ and, as usual, $\mu_a(n)=\binom{n}{a}2^{-\binom{a}{2}}$ is the expected number of independent $a$-sets in $\Gnh$.
\end{lemma}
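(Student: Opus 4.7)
The strategy is to combine the exact formula for $\tL_0$ from Lemma~\ref{lem:tLmax} with the estimates for $x_t(\rho)$ and $y_t(\rho)$ from Corollary~\ref{cor_y} and Lemma~\ref{lem:xay}, and then identify the constant $2/\log 2$ as coming from the value of $\rho$ itself, which lies within $O(1)$ of $\alpha_0(n)$.

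\medskip
\noindent\textbf{Step 1: Reparametrise.} Starting from
$\tL_0(\rho,k,t) = \rho\log(\rho k) - \log k - \rho + 1 - x - \rho y$
(Lemma~\ref{lem:tLmax}), I use $\rho k = n$ and $\log k = \log n-\log\rho$ together with the definition $\hL_0(n,k,t)=\tL_0(n/k,k,t)$ to obtain
\[
 \hL_0(n,k,t) = (\rho-1)\log n + \log\rho - \rho + 1 - x - \rho y.
\]

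\medskip
\noindent\textbf{Step 2: Substitute $x$ and $y$.} By Lemma~\ref{lem:xay} applied to $a$ (noting $a=t+O(1)$), we have $x = \log(d_a) - ay + O(1)$, so $-x - \rho y = (a-\rho)y - \log(d_a) + O(1)$. By Corollary~\ref{cor_y}, $y = 2\log n - \log\log n + O(1)$; since $a-\rho = O(1)$, multiplying through gives
\[
 (a-\rho)y = 2(a-\rho)\log n - (a-\rho)\log\log n + O(1).
\]
Inserting these into the expression for $\hL_0$ yields
\[
 \hL_0(n,k,t) = (\rho-1)\log n + \log\rho - \rho + 2(a-\rho)\log n - (a-\rho)\log\log n - \log(d_a) + O(1).
\]

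\medskip
\noindent\textbf{Step 3: Eliminate $\log(d_a)$ via $\mu_a$.} Since $d_a = 2^{\binom{a}{2}}a!$ and $\mu_a(n) = \binom{n}{a}2^{-\binom{a}{2}}$, Stirling's approximation gives $\log\mu_a = a\log n - \log(a!) - \binom{a}{2}\log 2 + O(1) = a\log n - \log(d_a) + O(1)$; equivalently, $-\log(d_a) = \log\mu_a - a\log n + O(1)$. Substituting and collecting coefficients of $\log n$ (which sum to $(\rho-1)+2(a-\rho)-a = a-\rho-1$) gives
\[
 \hL_0(n,k,t) = (a-\rho-1)\log n + \log\rho - \rho - (a-\rho)\log\log n + \log(\mu_a(n)) + O(1).
\]

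\medskip
\noindent\textbf{Step 4: Use $\rho = \alpha_0(n)+O(1)$.} Since $\rho = \Theta(\log n)$ we have $\log\rho = \log\log n + O(1)$, which combines with the $-(a-\rho)\log\log n$ term to give $-(a-\rho-1)\log\log n$. Finally, the hypothesis $\rho = t-\Theta(1)$ together with $t = \alpha_0(n)+O(1)$ and the explicit formula \eqref{eq:a0def} (with $b=2$ since $p=1/2$) yields
\[
 \rho = \alpha_0(n)+O(1) = \tfrac{2}{\log 2}(\log n - \log\log n) + O(1),
\]
so $-\rho = -\tfrac{2}{\log 2}(\log n - \log\log n) + O(1)$. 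Grouping this with the $(a-\rho-1)(\log n - \log\log n)$ contribution produces exactly $\bigl(a-\rho-1-\tfrac{2}{\log 2}\bigr)(\log n-\log\log n)$, proving the claim.

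\medskip
\noindent\textbf{Main risk.} The computation is essentially bookkeeping, so the only real danger is sign or grouping errors while combining the $\log n$, $\log\log n$, $\log\rho$, and $-\rho$ contributions; the conceptually interesting point is that the apparently mysterious constant $2/\log 2$ arises from the value of $\rho$ itself (via its closeness to $\alpha_0(n)$), not from the multipliers $x$ or $y$.
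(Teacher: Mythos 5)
Your proposal is correct and follows essentially the same route as the paper: start from the exact formula $\tL_0=\rho\log(\rho k)-\log k-\rho+1-x-\rho y$ of Lemma~\ref{lem:tLmax}, rewrite $-x-\rho y=(a-\rho)y-(x+ay)$, substitute $x+ay=\log(d_a)+O(1)$ from Lemma~\ref{lem:xay} and $y=2\log n-\log\log n+O(1)$ from Corollary~\ref{cor_y}, convert $\log(d_a)$ into $a\log n-\log\mu_a(n)+O(1)$, and finally use $\rho=\alpha_0(n)+O(1)=\tfrac{2}{\log 2}(\log n-\log\log n)+O(1)$; the only differences (writing $\log k=\log n-\log\rho$ exactly instead of estimating $\log k$ directly, and the inessential mention of Stirling) are cosmetic.
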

\begin{proof}
Let $\rho=n/k$, so by assumption $\rho\ge 2$ and $t-\rho=\Theta(1)$. By the formula \eqref{tLxy} from Lemma~\ref{lem:tLmax} we have
\[
 L:= \hL_0(n,k,t) = \tL_0(\rho,k,t) = \rho\log n -\log k -\rho +1 -(x+ay)+(a-\rho)y,
\]
where $x=x_t(\rho)$ and $y_t(\rho)$. By Lemma~\ref{lem:xay} we have
\[
 x+ay = \log(d_a)+O(1).
\]
Since
\[
 \mu_a(n) = \binom{n}{a}2^{-\binom{a}{2}} \sim \frac{n^a}{a!2^{\binom{a}{2}}} = \frac{n^a}{d_a},
\]
we have $\log(d_a)=a\log n-\log(\mu_a(n))+o(1)$. Thus
\begin{eqnarray}
 L &=& \rho\log n-\log k-\rho-a\log n+\log(\mu_a(n)) +(a-\rho)y+ O(1) \nonumber\\
 &=& (a-\rho)(y-\log n) + \log(\mu_a(n))-\log k -\rho +O(1). \label{Larho}
\end{eqnarray}
Now by assumption
\[
 \rho=n/k=t-\Theta(1)=\alpha_0(n)+O(1) = 2\log_2 n -2\log_2\log n+O(1),
\]
since $\log_2n=\Theta(\log n)$. Thus
\[
 \rho = \frac{2}{\log 2}(\log n-\log \log n) +O(1).
\]
Also, crudely, $k=n/\rho=\Theta(n/\log n)$, so
\[
 \log k = \log n-\log\log n+O(1).
\]
Substituting the last two formulae into \eqref{Larho}, we have
\[
 L = (a-\rho)(y-\log n) + \log(\mu_a(n)) -\left(1+\frac{2}{\log 2}\right)(\log n-\log\log n) +O(1).
\]
Finally, note that $a-\rho=O(1)$ and that, from \eqref{yvalue}, $y=2\log n-\log\log n+O(1)$. Thus
\[
 L = \left(a-\rho -1-\frac{2}{\log 2}\right)(\log n-\log\log n)+\log(\mu_a(n))+O(1),
\]
as claimed.
\end{proof}

We can also use the value of $y$ from Corollary~\ref{cor_y} to give approximate bounds on the partial derivatives of $L_0$ and $\hL_0=L_0/k$.

\begin{corollary}\label{dnk}
Suppose that $t=t(n)=\alpha_0(n)+O(1)$ is an integer. Uniformly over all $k\le n/2$ such that $k=n/(t-\Theta(1))$ we have
\[
 \frac{\partial}{\partial k} \hL_0(n,k,t) =  \Theta\left(\frac{\log^3 n}{n}\right)
\text{,\qquad}
 \frac{\partial}{\partial n} \hL_0(n,k,t) = -\Theta\left(\frac{\log^2 n}{n}\right)
\]
and
\[
 \frac{\partial}{\partial k} L_0(n,k,t) = \frac{2}{\log 2}\log^2 n+O(\log n\log\log n).
\]
\end{corollary}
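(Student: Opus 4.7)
The plan is a direct computation built from three already-established pieces: the formulas for the partial derivatives of $\hL_0$ in Lemma~\ref{dLnk}, the asymptotic value of $y_t(\rho)$ in Corollary~\ref{cor_y}, and the estimate for $\hL_0$ itself in Lemma~\ref{L0approx}. Under the hypothesis, $\rho=n/k=t-\Theta(1)$ with $t=\alpha_0(n)+O(1)$, so using the definition \eqref{eq:a0def} of $\alpha_0$ we have
\[
 \rho = \tfrac{2}{\log 2}(\log n -\log\log n) + O(1) = \Theta(\log n),
\]
hence $k = n/\rho = \Theta(n/\log n)$. Corollary~\ref{cor_y} then gives $y_t(\rho) = 2\log n - \log\log n + O(1)$, so
\[
 \log n - y_t(\rho) = -(\log n-\log\log n) + O(1).
\]

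For the second asymptotic, plug this into the $n$-derivative formula from Lemma~\ref{dLnk}: the numerator is $-\Theta(\log n)$, and dividing by $k = \Theta(n/\log n)$ yields $-\Theta(\log^2 n / n)$, as claimed. For the first asymptotic, the $k$-derivative formula from Lemma~\ref{dLnk} has three summands: $-(n/k^2)(\log n-y_t(\rho)) = \Theta(\log^3 n / n)$, while $n/k^2 = \Theta(\log^2 n / n)$ and $1/k = \Theta(\log n / n)$ are both of lower order. So the whole expression is $\Theta(\log^3 n / n)$.

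For the third bound, use $L_0 = k\hL_0$, so
\[
 \frac{\partial L_0}{\partial k} = \hL_0 + k\frac{\partial \hL_0}{\partial k}.
\]
By Lemma~\ref{L0approx}, since $a-\rho = O(1)$ and $\log\mu_a(n) = O(\log n)$, we have $\hL_0(n,k,t) = O(\log n)$. Multiplying the $k$-derivative formula from Lemma~\ref{dLnk} by $k$,
\[
 k\frac{\partial \hL_0}{\partial k} = -\rho\bigl(\log n - y_t(\rho)\bigr) + \rho - 1 = \rho(\log n - \log\log n) + O(\log n).
\]
Substituting the expression for $\rho$ above and expanding,
\[
 \rho(\log n - \log\log n) = \tfrac{2}{\log 2}(\log n - \log\log n)^2 + O(\log n) = \tfrac{2}{\log 2}\log^2 n + O(\log n\log\log n),
\]
and adding the $O(\log n)$ contribution from $\hL_0$ gives the stated formula.

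There is essentially no real obstacle here; the only thing that needs care is keeping track of the sub-leading $\log n \log\log n$ term when squaring $\rho$, and checking that the various error terms ($\hL_0$, the $\rho-1$ piece, and the residual $O(1)$ inside $\log n - y_t(\rho)$) are all absorbed into $O(\log n\log\log n)$. No structural ideas beyond careful substitution are required.
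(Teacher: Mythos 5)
Your proposal is correct and follows essentially the same route as the paper: substitute the value $y_t(\rho)=2\log n-\log\log n+O(1)$ from Corollary~\ref{cor_y} into the derivative formulas of Lemma~\ref{dLnk} (with $\rho=n/k=\frac{2}{\log 2}(\log n-\log\log n)+O(1)$ and $k=\Theta(n/\log n)$), and for the $L_0$ bound use the product rule $\partial_k L_0=\hL_0+k\,\partial_k\hL_0$ together with $\hL_0=O(\log n)$ from Lemma~\ref{L0approx}. The bookkeeping of the $O(\log n\log\log n)$ term matches the paper's computation, so there is nothing to add.
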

\begin{proof}
Note that the dependence of $t$ on $n$ is only relevant for the asymptotics; by definition of partial derivative, we hold $t$ constant when differentiating. Also, in the end $t=\alpha(n)-1$ or $\alpha(n)-2$ will be locally constant.
The bounds on the partial derivatives of $\hL_0$ follow by substituting the value $y=2\log n+O(\log\log n)\sim 2\log n$ from \eqref{yvalue} into the conclusion of Lemma~\ref{dLnk}, noting that $n/k=\alpha_0(n)+O(1)=2\log_2 n+O(\log\log n)\sim 2\log_2 n$.

For $L_0(n,k,t)=k\hL_0(n,k,t)$, calculating slightly more precisely,
\begin{multline*}
 \frac{\partial}{\partial k} L_0 = 
  \frac{\partial}{\partial k} (k\hL_0) = \hL_0 + k\frac{\partial}{\partial k}\hL_0 
 = \hL_0 + \frac{n}{k}(y_t(n/k)-\log n) + \frac{n}{k}-1 \\
 = \hL_0 + \frac{n}{k}\log n +O(\log n\log\log n) 
 = \hL_0 + \frac{2}{\log 2}\log^2 n +O(\log n\log\log n).
\end{multline*}
The result follows since $\hL_0(n,k,t)=O(\log n)$ by Lemma~\ref{L0approx}.
\end{proof}

For the rest of the section we consider a function $\beta(n)$ satisfying the following assumptions; the upper bound on $\beta$ is of no particular significance.
\begin{assumption}\label{assump}
The function $\beta$ is defined on a subset $W$ of $\R$ which is a union of intervals, and is constant on each interval. Furthermore, for some constant $\eps>0$
we have
\[
 \alpha_0(n)-1-\frac{2}{\log 2} + \eps \le \beta(n) \le \alpha_0(n)+100
\]
for all large enough $n$.
\end{assumption}
Note in the assumptions of Lemma~\ref{lem:polylog}, we specified $\beta(n)=\alpha(n)-1$ or $\beta(n)=\alpha(n)-2$. These both satisfy Assumption~\ref{assump}, since $\beta\ge \alpha_0-3$ and $2/\log 2>2$.
For $n\in W$ let
\[
 I_n = \left[\frac{n}{\beta-\eps/4}, \frac{n}{\alpha_0-100}\right].
\]

Recall that $L_0(n,k,t)=k\hL_0(n,k,t)$, so one is zero if and only if the other is.
\begin{lemma}\label{lem:k*}
For each large enough (real) $n\in W$ there is a unique $k^*=k^*(n)\in I_n$ such that
\begin{equation}\label{k*cond}
 \hL_0(n,k^*(n),\beta(n)) = 0 = L_0(n,k^*(n),\beta(n)).
\end{equation}
Furthermore,
\[
 \frac{n}{k^*(n)} = \alpha(n) -1-\frac{2}{\log 2} + \frac{\log(\mu_{\alpha(n)}(n))}{\log n-\log\log n}+O(1/\log n),
\]
and if $n$ is an integer then $k^*(n)-k_\beta(n)=O(\log^2 n)$.
\end{lemma}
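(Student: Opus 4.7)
My plan has three parts: establish existence and uniqueness of $k^*\in I_n$; derive the asymptotic formula for $\rho^*:=n/k^*$; and compare $k^*$ with $k_\beta$ for integer $n$. Throughout I set $t=\beta(n)$ and $a=\alpha(n)$. Since $\beta\in\{\alpha-1,\alpha-2\}$ and both $a,t=\alpha_0(n)+O(1)$ by Assumption~\ref{assump}, we have $t\le a$, matching the hypotheses of Lemma~\ref{L0approx} and Corollary~\ref{dnk}. Moreover, for any $k\in I_n$ and $\rho=n/k\in[\alpha_0-100,\beta-\eps/4]$, the difference $t-\rho=\beta-\rho$ is bounded between positive constants, so we sit uniformly in the common valid range of those results.

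For existence and uniqueness, Corollary~\ref{dnk} gives $\partial\hL_0/\partial k=\Theta(\log^3 n/n)>0$, so $\hL_0(n,\cdot,\beta)$ is strictly increasing on $I_n$. To locate the sign change I invoke Lemma~\ref{L0approx}. At $k=n/(\beta-\eps/4)$ the coefficient of $(\log n-\log\log n)$ is $(\alpha-\beta)+\eps/4-1-2/\log 2\le 1+\eps/4-2/\log 2$, a negative constant (as $2/\log 2>2.88$); combined with $\log\mu_\alpha(n)\le (1+o(1))\log n$ from \eqref{rhoalpha1}, this forces $\hL_0<0$ for large $n$. At $k=n/(\alpha_0-100)$ the coefficient exceeds $95$ while $\log\mu_\alpha=O(\log n)$, so $\hL_0>0$. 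The intermediate value theorem then produces the unique $k^*$. Substituting $\rho^*=n/k^*$ into $\hL_0(n,k^*,\beta)=0$ via Lemma~\ref{L0approx} and rearranging gives exactly the claimed
\[
 \rho^* = \alpha - 1 - \frac{2}{\log 2} + \frac{\log\mu_{\alpha(n)}(n)}{\log n - \log\log n} + O(1/\log n).
\]

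For the comparison with $k_\beta$ when $n$ is an integer, Corollary~\ref{cEnk} gives $\log E_{n,k,\beta}=L_0(n,k,\beta)+O(\log^4 n)$ throughout the relevant range. Using $\partial L_0/\partial k=(2/\log 2)\log^2 n+O(\log n\log\log n)$ from Corollary~\ref{dnk} together with $L_0(n,k^*,\beta)=0$ from \eqref{k*cond}, for any constant $C>0$ we obtain $L_0(n,k^*\pm C\log^2 n,\beta)=\pm\Theta(C\log^4 n)$. Translating back via Corollary~\ref{cEnk}, for large enough $C$ this forces $E_{n,k^*+C\log^2 n,\beta}>1>E_{n,k^*-C\log^2 n,\beta}$, so $k^*-C\log^2 n<k_\beta\le k^*+C\log^2 n$, yielding $|k_\beta-k^*|=O(\log^2 n)$. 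The derivative estimate remains valid on this window because $k^*\sim n/(2\log_2 n)$, so $n/k$ varies by only $o(1)$ across it.

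The hard step will be the sign check at the lower endpoint of $I_n$: $\log\mu_\alpha$ can be as large as $(1+o(1))\log n$, while the negative coefficient in Lemma~\ref{L0approx} is only about $-1.88$, giving a margin of just $\approx 0.88\log n$. This comfortably dominates the $O(\log\log n)$ error, but the constants $100$ and $\eps/4$ appearing in the definition of $I_n$ must be chosen consistently with Assumption~\ref{assump} for the inequality to close; in particular, the precise form of the bound $\alpha_0-1-2/\log 2+\eps\le\beta\le\alpha_0+100$ is what ensures both endpoint sign conditions hold simultaneously. Once this is in hand, parts (ii) and (iii) reduce to direct substitution and a derivative-and-translation argument.
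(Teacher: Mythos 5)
Your proposal is correct, and it uses the same key ingredients as the paper (monotonicity and derivative bounds from Corollary~\ref{dnk}, the approximation of Lemma~\ref{L0approx}, and Corollary~\ref{cEnk} for the comparison with $k_\beta$); the third part is essentially identical to the paper's argument, modulo the trivial point that $k^*\pm C\log^2 n$ should be rounded to integers before evaluating $E_{n,k,\beta}$. The one place you genuinely diverge is the existence step: you run the intermediate value theorem between the two endpoints of $I_n$ and then obtain the formula for $n/k^*$ by substituting into Lemma~\ref{L0approx}, whereas the paper defines an explicit $k_0$ (chosen so that the main term of \eqref{eq:L0approx} vanishes, whence $\hL_0(n,k_0,\beta)=O(1)$) and uses the derivative lower bound $\Theta(\log^3 n/n)$ to place the root within $O(n/\log^3 n)$ of $k_0$; that route gives, in one stroke, that the root lies inside $I_n$ (with $\eps/4$ room to spare) and the $O(1/\log n)$ error in $n/k^*$, and it avoids your endpoint sign checks altogether. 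Your sign check at $k=n/(\beta-\eps/4)$ does close, but for a cleaner reason than the worst-case juxtaposition you describe: writing the total leading coefficient as $\alpha_0-\beta-1-2/\log 2+\eps/4$ (after absorbing $\log\mu_{\alpha}\approx(\alpha_0-\alpha)\log n$), the lower bound on $\beta$ in Assumption~\ref{assump} makes it at most $-3\eps/4$, so the margin is $\Omega(\eps\log n)$ uniformly; this also shows the check is not sensitive to the near-cancellation you worry about. Finally, note that you restrict to $\beta\in\{\alpha-1,\alpha-2\}$, while the lemma is stated for any $\beta$ satisfying Assumption~\ref{assump}; this matters only because applying Lemma~\ref{L0approx} with $a=\alpha$ formally needs $t\le a$ (the paper's proof makes the same choice, and for $\beta>\alpha$ one would instead invoke it with a larger $a$ and use Remark~\ref{rem:noks}), so it is a cosmetic rather than substantive restriction for the intended application.
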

\begin{proof}
Keeping $n$ fixed, from Corollary~\ref{dnk}, if $n$ is large enough, then $\hL_0(n,k,\beta(n))$ is strictly increasing as a function of $k\in I_n$, with derivative $\Theta(\log^3 n/n)$. This implies uniqueness of $k^*(n)$ once we show existence.  Define $k_0=k_0(n)$ by
\[
 \frac{n}{k_0} = \alpha(n)-1-\frac{2}{\log 2} + \frac{\log(\mu_{\alpha(n)}(n))}{\log n-\log\log n}.
\]
Then, recalling that $\mu_{\alpha(n)}(n)=n^{\alpha(n)-\alpha_0(n)+o(1)}$, we have $n/k_0=\alpha_0(n)-1-2/\log 2+o(1)\le \beta(n)-\eps/2$, so $k_0\in I_n$ with $\eps/4$ room to spare.

By Lemma~\ref{L0approx} we have $\hL_0(n,k_0,\beta(n))=O(1)$; we chose $k_0$ so that the main term in \eqref{eq:L0approx} vanishes, leaving only the error term. Since, as a function of $k$, $\hL_0$ has derivative $\Theta(\log^3 n/n)$, it follows immediately that $k^*(n)$ exists, and that $k^*(n)-k_0(n)=O(n/\log^3 n)$. Since $k^*$ and $k_0$ are of order $n/\log n$, this translates to $n/k^*=n/k_0+O(1/\log n)$, proving the first statement.

For the second statement, recall the bound
\begin{equation}\label{close}
 \log(E_{n,k,\beta}) = k\hL_0(n,k,\beta) + O(\log^4 n)
\end{equation}
given by Corollary~\ref{cEnk} and \eqref{L0hL0}.
Consider $k=k^*(n)+x$, where $x$ will be of larger order than $\log^2 n$ but not too large (say $o(n/\log^2 n)$). Then from the derivative bound, $\hL_0(n,k,\beta)=\Theta(x\log^3 n/n)$,
so $k\hL_0(n,k,\beta)=\Theta(x\log^2 n)$. For $x$ of the magnitude indicated this quantity is $\omega(\log^4 n)$. Choosing such an $x$ so that $k$ is an integer, from \eqref{close} we conclude that $\log(E_{n,k,\beta})>0$, so $k_\beta(n)\le k=k^*(n)+x$. A similar argument with $x$ negative shows that $k_\beta(n)=k^*(n)+O(\log^2 n)$.
\end{proof}

\begin{lemma}\label{lem:k*diff}
The function $k^*(n)$ is differentiable on $W$, and its derivative satisfies
\[
 \left(\frac{\dd k^*(n)}{\dd n}\right)^{-1} = \frac{n}{k^*(n)} + \frac{2}{\log 2} + O(1/\log n).
\]
\end{lemma}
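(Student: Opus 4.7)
The plan is to apply the implicit function theorem to the defining identity
\[
\hL_0(n,k^*(n),\beta(n))=0
\]
established in Lemma~\ref{lem:k*}. Since $\beta$ is locally constant on $W$ (by Assumption~\ref{assump}, and we differentiate with $\beta$ held fixed), and since $\hL_0(n,k,\beta)$ is a smooth function of $(n,k)$ whose partial derivative $\partial_k\hL_0$ is bounded away from $0$ (it is $\Theta(\log^3 n/n)$ by Corollary~\ref{dnk}), the function $k^*$ is differentiable on $W$ with
\[
\frac{\dd k^*(n)}{\dd n}=-\frac{\partial_n\hL_0(n,k^*,\beta)}{\partial_k\hL_0(n,k^*,\beta)}.
\]

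I would then substitute the explicit formulas from Lemma~\ref{dLnk}. Writing $A:=\log n-y_{\beta}(n/k^*)$, multiplying numerator and denominator by $k^{*2}$, and taking the reciprocal yields
\[
\left(\frac{\dd k^*}{\dd n}\right)^{-1}=\frac{n}{k^*}-\frac{n-k^*}{k^*A}=\frac{n}{k^*}-\frac{\rho-1}{A},
\]
where $\rho=n/k^*$. So everything reduces to estimating $(\rho-1)/A$ to accuracy $O(1/\log n)$.

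Set $L:=\log n-\log\log n$. Corollary~\ref{cor_y} gives $y_{\beta}(\rho)=2\log n-\log\log n+O(1)$, hence $A=-L+O(1)$. By Lemma~\ref{lem:k*} and the expansion $\alpha_0(n)=(2/\log 2)L+O(1)$ (recalling $\alpha=\alpha_0+O(1)$ and that $\log\mu_\alpha/L=O(1)$ since $\log\mu_\alpha=O(\log n)$), we obtain
\[
\rho-1=\alpha(n)-2-\tfrac{2}{\log 2}+\frac{\log\mu_{\alpha}(n)}{L}+O(1/\log n)=\tfrac{2}{\log 2}L+O(1).
\]
Dividing, $\frac{\rho-1}{L}=\tfrac{2}{\log 2}+O(1/L)=\tfrac{2}{\log 2}+O(1/\log n)$. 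Finally, because $\rho-1=O(\log n)$ and $A=-L+O(1)$, we get $\frac{1}{A}=-\frac{1}{L}+O(1/\log^2 n)$ and hence $\frac{\rho-1}{A}=-\frac{\rho-1}{L}+O(1/\log n)=-\tfrac{2}{\log 2}+O(1/\log n)$. Substituting this into the displayed formula above yields the claim.

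The calculation itself is the only real obstacle; the mild subtlety is ensuring an $O(1/\log n)$ rather than $O(\log\log n/\log n)$ error. This is forced by the fact that the leading order part of $\rho-1$ is exactly $(2/\log 2)L$ up to an additive $O(1)$ (the $\log\log n$ coefficients cancel between $\alpha_0$ and $y_{\beta}$), so the ratio $(\rho-1)/L$ is $2/\log 2$ with a clean $O(1/L)=O(1/\log n)$ correction, rather than the weaker bound that the raw estimates on $\alpha_0$ and $y_\beta$ would suggest.
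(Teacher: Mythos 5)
Your proposal is correct and follows essentially the same route as the paper: the Implicit Function Theorem applied to $\hL_0(n,k,\beta)=0$ with $\beta$ locally constant, the partial derivatives from Lemma~\ref{dLnk} giving $(\dd k^*/\dd n)^{-1}=\rho-\frac{\rho-1}{\log n-y_\beta(\rho)}$, and then Corollary~\ref{cor_y} together with $\rho=\alpha_0(n)+O(1)$ to evaluate the fraction. The only cosmetic difference is that you rewrite the numerator as $(2/\log 2)L+O(1)$ with $L=\log n-\log\log n$, whereas the paper converts the denominator to base-$2$ logarithms; both hinge on the same cancellation of the $\log\log$ terms that yields the $O(1/\log n)$ error.
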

\begin{proof}
The Implicit Function Theorem, applied to the continuously (in fact, infinitely) differentiable function $\hL_0(n,k,t)$ with $t$ fixed tells us that $k^*(n)$, defined by $\hL_0(n,k^*,\beta(n))=0$, is differentiable, and that its derivative is $-\frac{\partial\hL_0}{\partial n} / \frac{\partial\hL_0}{\partial k}$.
Writing $\rho$ for $n/k$, by Lemma~\ref{dLnk} and Corollary~\ref{cor_y}
the reciprocal of the derivative is thus
\[
 -\frac{\partial\hL_0}{\partial k} / \frac{\partial\hL_0}{\partial n}
 = \frac{n}{k}-\frac{\rho -1}{\log n-y_\beta(\rho)}
 = \rho + \frac{\rho+O(1)}{\log n-\log\log n+O(1)}.
\]
Now $\rho=\alpha_0(n)+O(1)=2\log_2 n-2\log_2\log_2 n+O(1)$, so the last fraction above is
\[
 \frac{2\log_2 n-2\log_2\log_2 n+O(1)}{\log n-\log\log n+O(1)} = \frac{2\log_2 n-2\log_2\log_2 n+O(1)}{(\log 2)(\log_2 n-\log_2\log_2 n)+O(1)}
\]
since $\log x=(\log 2)\log_2 x$, and hence $\log\log n=\log(\log_2 n)+O(1)=(\log 2)\log_2\log_2 n+O(1)$.
The result follows.
\end{proof}

Together, Lemmas~\ref{lem:k*} and~\ref{lem:k*diff} imply Lemma~\ref{lem:polylog}, so the proof of Lemma~\ref{lem:polylog} is complete.

\subsection{Proof of Lemma~\ref{lem:as1}}\label{ss:alphashift}

We shall prove the following sharper form of Lemma~\ref{lem:as1}, since it seems that the lower bound here is perhaps quite close to the truth (see the discussion in \S\ref{s:intuition}), so this might be useful elsewhere.
\begin{lemma}\label{lem:alphashift}
Suppose that $\log^5 n\le \mu_{\alpha(n)}(n)=O(n/\log^2 n)$. Then, whp,
\[
 \chi(\Gnh) \ge k^*(n) - (1+\eps)\frac{\mu\log\nu}{\alpha(\log n-\log\log n)},
\]
where $\alpha=\alpha(n)$, $\mu=\mu(n)=\mu_{\alpha(n)}(n)$, $\nu=(n/\log n)/\mu$, $\eps=\eps(n)=O(1/\log\nu)\to 0$, and $k^*(n)=k_\beta^*(n)$ is defined in Lemma~\ref{lem:k*}.
\end{lemma}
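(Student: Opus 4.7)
The strategy is a first-moment bound on the number of $k$-colourings of $\Gnh$, restricted to the whp event $X_\alpha\le M:=(1+o(1))\mu$, where $X_\alpha$ is the number of independent $\alpha$-sets. A second-moment computation analogous to that in the proof of Lemma~\ref{lemma:coupling} gives $\Var(X_\alpha)\sim\mu$; since $\mu\ge \log^5 n$, Chebyshev yields $X_\alpha\le M$ whp. Also $\mu_{\alpha+1}(n)=O(\mu/(n\log n))=o(1)$, so whp $\alpha(\Gnh)=\alpha$ and every colour class has size $\le\alpha$. Let $N_m$ be the number of unordered $k$-colourings with exactly $m$ classes of size $\alpha$ (the others being $\beta$-bounded, $\beta=\alpha-1$). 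Decomposing such a colouring into the $m$ disjoint independent $\alpha$-sets together with a $\beta$-bounded $(k-m)$-colouring of the induced subgraph on the remaining $n-m\alpha$ vertices, and using $\binom{n}{\alpha,\dots,\alpha,n-m\alpha}\le\binom{n}{\alpha}^m$, gives $\E N_m\le (\mu^m/m!)\,E_{n-m\alpha,k-m,\beta}$. It then suffices to show $\sum_{m=0}^M \E N_m=o(1)$ when $k=k^*(n)-d$ with $d=(1+\eps)\mu\log\nu/(\alpha\ell)$ and $\ell=\log n-\log\log n$.

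\textbf{Key computation.} By Corollary~\ref{cEnk}, $\log E_{n',k',\beta}=L_0(n',k',\beta)+O(\log^4 n)$ for $n'=n-m\alpha$, $k'=k-m$. I linearize $L_0$ around the natural base point $(n',k^*(n'))$: by Lemma~\ref{lem:k*}, $L_0(n',k^*(n'),\beta)=0$, and by Corollary~\ref{dnk}, $\partial L_0/\partial k=(2/\log 2)\ell^2(1+o(1))$ at that point. By Lemma~\ref{lem:k*diff} combined with the formula for $n/k^*$ from Lemma~\ref{lem:k*}, $(\dd k^*/\dd n)^{-1}=\alpha-1+\log\mu/\ell+O(1/\log n)$, so $k^*(n)-k^*(n-m\alpha)\sim m\alpha/(\alpha-1+\log\mu/\ell)$. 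Writing $k=k^*(n)-d$ and simplifying, this yields
\[
 k'-k^*(n')=-d+\frac{m\log\nu}{\ell\alpha}(1+o(1)),\qquad \log\nu=\ell-\log\mu.
\]
Multiplying by $(2/\log 2)\ell^2$, and using $\alpha=(2/\log 2)\ell+O(1)$ so that $2\ell^2/\log 2=\alpha\ell(1+o(1))$, one obtains
\[
 \log \E N_m\le m\log(e\mu/m)+m\log\nu(1+o(1))-d\ell\alpha(1+o(1))+O(\log^4 n).
\]

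\textbf{Maximizing and concluding.} The $m$-derivative of the right-hand side is $\log(\mu/m)+\log\nu(1+o(1))$, positive throughout $[0,M]$ in our regime, so the maximum is at $m=M\sim\mu$, giving $\log\E N_M\le(1+o(1))\mu\log\nu-(1+o(1))d\ell\alpha+O(\log^4 n)$. Taking $d=(1+\eps)\mu\log\nu/(\alpha\ell)$ with $\eps=C/\log\nu$ for a sufficiently large constant $C$ makes this at most $-\log(M+1)-1$; the hypothesis $\mu\ge\log^5 n$ is what allows the additive $O(\log^4 n)$ correction and the union-bound factor $M+1$ to be absorbed into $\eps=O(1/\log\nu)$. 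The lemma then follows. \textbf{The main difficulty} is the delicate cancellation in the key computation: the dominant $(2/\log 2)\ell$ factor in (appropriately scaled) $\partial L_0/\partial k$ cancels against the same constant in $(\dd k^*/\dd n)^{-1}$, so that the $\ell$-factor collapses to the much smaller $\log\nu=\ell-\log\mu$. A naive substitution into Lemma~\ref{L0approx} would leave an $O(1)$ error in $\hL_0$ that becomes $O(k')=O(n/\log n)$ after multiplication by $k'$, far too large to give any useful bound — one really must work via the derivative formulations (Corollary~\ref{dnk} and Lemma~\ref{lem:k*diff}) and use $L_0(n,k^*,\beta)=0$ as an exact identity rather than via the approximation of Lemma~\ref{L0approx}.
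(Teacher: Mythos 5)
Your proposal is correct and follows essentially the same route as the paper: restrict to the whp events that there are no independent $(\alpha+1)$-sets and $X_\alpha\le(1+o(1))\mu$, bound $\E N_m\le(\mu^m/m!)\,E_{n-\alpha m,\,k-m,\,\beta}$, and control $L_0(n-\alpha m,k-m,\beta)$ via the exact identity $L_0(n',k^*(n'),\beta)=0$ together with the $\partial L_0/\partial k$ estimate of Corollary~\ref{dnk} and the derivative of $k^*$ from Lemma~\ref{lem:polylog}, exactly as the paper does (which simply uses $\mu^m/m!\le e^\mu$ uniformly rather than maximizing over $m$). One small slip: $\mu_{\alpha+1}(n)=\Theta(\mu\log n/n)$, not $O(\mu/(n\log n))$, but this is still $O(1/\log n)=o(1)$, so the conclusion you draw from it is unaffected.
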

Before giving the proof, we note that the result we need, Lemma~\ref{lem:as1}, follows.
\begin{proof}[Proof of Lemma~\ref{lem:as1}]
This is immediate from Lemma~\ref{lem:alphashift}, noting that by assumption $\nu(n)$ as defined there is $\Theta(\log n)$, so $\log\nu\sim\log\log n$, recalling that $\alpha(n)\sim c_0\log n$, and noting that by Lemma~\ref{lem:k*}, $k_\beta-k^*(n)=O(\log^2 n)$, which is much smaller than the error term we are aiming for.
\end{proof}

\begin{proof}[Proof of Lemma~\ref{lem:alphashift}]
Let 
\[
 \delta = C/\log \nu,
\]
where $C\ge 3$ is a constant that we will specify later.
Let $k_0=\floor{k^*(n)-d}$, where
\[
 d= (1+5\delta)\frac{\mu\log\nu}{\alpha(\log n-\log\log n)},
\]
so our aim is to show that whp $\chi(\Gnh)\ge k_0$. To do this, it suffices to show that whp $\Gnh$ has no proper $k_0$-colouring. Note for later that $k^*(n)=\Theta(n/\log n)$ while, recalling our assumptions on $\mu$, we have $d=O(\mu/\log n)=O(n/\log^3 n)$. Thus, crudely, $d=o(k^*/\log n)$ and it follows easily that
\begin{equation}\label{nk0close}
 n/k_0 = n/k^*(n) +o(1).
\end{equation}

Let $\alpha=\alpha(n)$. By assumption, $\mu:=\mu_\alpha(n)=O(n/\log^2 n)$, so $\mu_{\alpha+1}(n)=O(\mu\log n/n)=O(1/\log n)\to 0$, and whp $\Gnh$ contains no independent sets of size $\alpha+1$. Thus it suffices to show that whp $\Gnh$ has no $\alpha$-bounded $k_0$-colouring.

We will group the potential colourings (or, more precisely, partitions into independent sets), according to the number $m$ of $\alpha$-sets included.  Let
\[
 m^+=\mu(1+\delta).
\]
Recalling that $X_\alpha$, the number of independent $\alpha$-sets, has mean $\mu$ and variance $O(\mu)$, we know from Chebyshev's inequality that whp $X_a\le m^+$. Thus it suffices to show that whp $\Gnh$ has no $\alpha$-bounded $k_0$-colouring using at most $m^+$ $\alpha$-sets.

Let $C_m$ denote the number of partitions of $[n]=V(\Gnh)$ into exactly $k_0$ independent sets of which exactly $m$ have size $\alpha$ and none has size larger than $\alpha$. We \emph{claim} that, if $n$ is large enough, for each $m\le m^+$ we have
\begin{equation}\label{ECM}
 \E[C_m] \le 1/n.
\end{equation}
Assuming this, then summing over the $m^++1=O(\mu)=o(n)$ values of $m$ and applying Markov's inequality, the proof is complete. Thus it suffices to prove \eqref{ECM}.

Now a potential colouring/partition of the type counted by $C_m$ may be described as follows: we pick an unordered $m$-tuple of disjoint $\alpha$-vertex subsets of $[n]$, and then we pick a partition $P$ of the remaining $n-\alpha m$ vertices into $k_0-m$ parts of size at most $\alpha-1$. The partition gives a legal colouring if and only if the $m$ $\alpha$-sets are independent, and $P$ induces a legal colouring of the corresponding subgraph of $G$. Hence,
\[
 \E[C_m] = \frac{1}{m!} \binom{n}{\alpha} \binom{n-\alpha}{\alpha}\cdots\binom{n-(m-1)\alpha}{\alpha} 2^{-m\binom{\alpha}{2}} E_{n-\alpha m,k_0-m,\alpha-1},
\]
where $E_{n',k',t}$ is the expected number of $t$-bounded unordered $k'$-colourings of $G_{n',1/2}$. Hence, bounding each binomial coefficient above by $\binom{n}{\alpha}$, we have
\[
 \E[C_m] \le \frac{\mu^m}{m!} E_{n-\alpha m,k_0-m,\alpha-1}.
\]
Taking logs, and using the standard bound $\mu^m/m!\le e^\mu$ (the former is one term in the expansion of the latter), we see that
\[
 \log\E[C_m] \le \mu + \log E_m
\]
where $E_m:=E_{n-\alpha m,k_0-m,\alpha-1}$.

Fortunately, we have a good approximation for $\log E_m$. Recalling \eqref{nk0close} and noting from Lemma~\ref{lem:k*} that $n/k^*(n)\le \alpha(n)-2/\log 2<\alpha(n)-2$,
we have $n/k_0<\alpha-1$ for $n$ large enough, and it follows that
\[
 \frac{n-\alpha m}{k_0-m}\le \frac{n}{k_0}< \alpha-1.
\]
Thus we can apply Lemma~\ref{lemsimp} to conclude that
\[
 \log E_m = L_0(n-\alpha m,k_0-m,\alpha-1) + O(\log^4 n),
\]
where $L_0$ is defined in \eqref{L0nktdef}.

Unfortunately we do not have a direct formula for $L_0$ sufficiently accurate for our present purpose. Fortunately, however, we do have indirect bounds, expressed in terms of $k^*(n)$, defined in Lemma~\ref{lem:k*}. Note that we will consider a range of values $n'$ satisfying $n'\in I$, where
\[
 I = [n-\alpha m^+,n].
\]
Since $\alpha m^+=O(\alpha\mu)=O(n/\log n)$, it follows easily that $\mu_\alpha(n')=\Theta(\mu)$ for all such $n'$. In particular, $\alpha(n')=\alpha$ does not vary over this range of $n'$, and it makes sense to consider $k^*(n')$ as in Lemma~\ref{lem:k*}, defined with $\beta=\alpha-1$, as a function of $n'$.

By definition $L_0(n-\alpha m,k^*(n-\alpha m),\alpha-1)=0$ (see \eqref{k*cond}). From the last part of Corollary~\ref{dnk} we thus have
\begin{equation}\label{L0k*}
 L_0(n-\alpha m,k_0-m,\alpha-1) \sim c_0\log^2 n(k_0-m-k^*(n-\alpha m)),
\end{equation}
where $c_0=2/\log 2$. Since $k_0$ is defined in terms of $k^*(n)$, the next step is to consider how $k^*(n')$ varies as $n'$ varies between $n$ and $n-\alpha m$.

Now by Lemma~\ref{lem:polylog}, for $n'\in I$ we have
\[
 \left(\frac{\dd k^*(n')}{\dd n'}\right)^{-1} = \alpha+\frac{\log(\mu_{\alpha}(n'))}{\log n'-\log\log n'} - 1 + O(1/\log n') ,
\]
recalling that $\alpha(n')=\alpha$ for all $n'\in I$. For $n'\in I$ we have $\log n'=\log n+o(1)$ and, as noted above, $\mu_\alpha(n')=\Theta(\mu)$. It follows that
\[
 \left(\frac{\dd k^*(n')}{\dd n'}\right)^{-1} = \alpha+\frac{\log\mu}{\log n-\log\log n} - 1 + O(1/\log n)  = \alpha-\frac{\log\nu}{\log n-\log\log n} + O(1/\log n),
\]
recalling that $\nu=(n/\log n)/\mu$. Thus, 
\[
 \left(\frac{\dd k^*(n')}{\dd n'}\right)^{-1} \ge \alpha-(1+\delta)\frac{\log\nu}{\log n-\log\log n},
\]
provided the constant $C$ appearing in the definition of $\delta$ is chosen large enough.

We now take the reciprocal. Using the expansion $(\alpha-x)^{-1}=\alpha^{-1}(1-x/\alpha)^{-1}= \alpha^{-1}+x\alpha^{-2}+\cdots$ we see that for $n'\in I$ we have
\[
 \frac{\dd k^*(n')}{\dd n'} \le \frac{1}{\alpha} + (1+2\delta)\frac{\log\nu}{\alpha^2(\log n-\log\log n)}.
\]
For any $m\le m^+$ this estimate applies for all $n'$ in the interval $(n-\alpha m,n)\subset I$, so it follows immediately that
\[
 k^*(n) - k^*(n-\alpha m) \le m + (1+2\delta)\frac{m\log\nu}{\alpha(\log n-\log\log n)}.
\]
Hence
\begin{eqnarray*}
 k_0-m - k^*(n-\alpha m) &=& k_0-k^*(n)-m + (k^*(n)-k^*(n-\alpha m)) \\
 &\le& -d - m + (k^*(n)-k^*(n-\alpha m)) \\
 &\le& -d + (1+2\delta)\frac{m\log\nu}{\alpha(\log n-\log\log n)} \\
 &\le& -d + (1+4\delta)\frac{\mu\log\nu}{\alpha(\log n-\log\log n)} \\
 &\le& -\frac{\delta\mu\log\nu}{\alpha(\log n-\log\log n)} \\
 &\le& -\frac{C \mu}{\alpha\log n},
\end{eqnarray*}
where in the last three steps we used the fact that $m\le m^+=(1+\delta)\mu$, then the definition of $d$, and finally the definition of $\delta$.

Hence, from \eqref{L0k*}, if $n$ is large enough
\[
 L_0(n-\alpha m,k_0-m,\alpha-1) \le -0.99c_0\log^2 n \frac{C \mu}{\alpha\log n} \le -0.98C \mu\le -2\mu,
\]
recalling that $C\ge 3$.

Putting the pieces together, we have
\[
 \log\E[C_m] \le \mu -2\mu + O(\log^4 n) \sim -\mu,
\]
recalling that $\mu\ge \log^5 n$ by assumption. Thus, if $n$ is large enough, $\E[C_m]\le 1/n$ with plenty of room to spare, giving \eqref{ECM}. Thus the proof of Lemma~\ref{lem:as1}, and hence of Theorem~\ref{theorem:polylog}, is complete.
\end{proof}

\section{Appendix: intuition behind conjectures}\label{s:intuition}

In this section we motivate the more refined conjectures in \S\ref{ss_furtherconjs}. There are two basic starting points, both described previously, so we only recall them briefly. Firstly, the very first guess at the chromatic number is from the `expectation threshold', the least $k$ such that the expected number of partitions into $k$ independent sets is larger than $1$. In calculating this, since there are rather few profiles (a list specifying how many independent sets have each possible size) to consider, one can consider only the optimal profile.

This intuition fails immediately when we look at independent sets of size $\alpha$: the naive `optimal profile' is `unachievable', because it would like us to use $\Theta(n/\log n)$ independent sets of size $\alpha$, but the actual number $X_\alpha$ will be close to $\mu_\alpha$ which (for most $n$) will be much smaller than this. So the first approximation is to consider $\alpha$-sets separately, expecting (since the naive optimum is to use many more than there are) that we will use as many as we can, and then considering the expectation threshold for colourings without $\alpha$-sets. 

This same `unachievability' phenomenon can also arise with $(\alpha-1)$-sets; again, the optimal profile would like to use $\Theta(n/\log n)$ of them. There are certainly enough present, but not necessarily enough \emph{disjoint} ones. Numerical calculations carried out by the first author suggest that this is an issue for $\mu_{\alpha-1}(n)$ up to around $n^{1+x_0}$ for some small positive constant $x_0$.

As in \S\ref{ss_furtherconjs}, to avoid a discontinuity when $\alpha$ changes, from now on we work in terms of $a=a(n)$, chosen so that $\mu_a(n)$ is between $n^{1/2+\delta}$ and $n^{3/2-\delta}$ for some positive $\delta$. We only consider the `good' $n$, for which such an $a$ exists. Then $a=\alpha$ or $\alpha-1$. In the latter case $\mu_{\alpha}(n)$ is at most $n^{1/2-\delta}$. For us, the independent sets of size $\alpha=a+1$ can be ignored in this case: there may be enough of them to affect the chromatic number significantly, but the standard deviation of $X_\alpha$ is at most around $n^{1/4}$, which is smaller than any of our predictions for $g(n)$. Heuristically, we include all $\alpha$-sets in our colouring, but do not need to consider them any further.

As outlined above, our main heuristic (we discuss another below) is as follows: to colour we choose as large as possible a collection $\cC$ of disjoint independent sets of size $a$. Then we assume that the rest of the graph can be coloured with colour classes of size $a-1$ as predicted by the relevant expectation threshold. Let us write $m$ for $X_a$, the number of independent sets of size $a$, which will typically be $\mu_a$ plus or minus order $\sqrt{\mu_a}$, recalling that the distribution of $X_a$ is approximately Poisson, and hence asymptotically Gaussian when $\mu_a\to\infty$.
We write $t$ for the size of $\cC$. Somewhat informally, we need to understand: (I) roughly how big $t$ is, and (hence) roughly how much $t$ varies as $m$ varies, and (II) how much a given change in $t$ affects the $(a-1)$-bounded chromatic number of the remaining graph $G_{n',p}$, where $n'=n-ta$.

Let us rescale by writing
\[
 m=\frac{2xn}{a^2} \text{\quad and\quad} t=\frac{2yn}{a^2}.
\]
Rather than consider the actual distribution of independent sets of size $a$, we work heuristically in the random hypergraph model $H_a(m)$, or rather the essentially equivalent variant where the $m$ hyperedges are chosen independently and uniformly from all $a$-sets. Since two $a$-sets intersect with probability $\sim a^2/n$, we see that on average one $a$-set intersects $\sim 2x$ others.

Case 1: $x=o(1)$, i.e., $m=o(n/\log^2 n)$. Then almost all $a$-sets intersect no others, so we have $t\sim m$. Moreover, if we add an extra $a$-set, it is very likely to be disjoint from the current maximum matching, so (somewhat informally)
\[
 \frac{\dy}{\dx} = \frac{\dt}{\dm} \sim 1.
\]

Case 2: $x=\Theta(1)$. Here it is hard to say anything very precise, but it is nevertheless clear that $t=\Theta(m)$, since we still have a constant fraction of $a$-sets that intersect no others. Certainly we expect that for some\footnote{One can probably describe $h$ in terms of the size of the largest independent set in a suitable random graph $G_{n,2x/n}$, but it is not clear that this adds much. In any case, we believe we understand the asymptotic behaviour as $x\to 0$ or $x\to\infty$ from cases 1 and 3.} well-behaved increasing function $h:(0,\infty)\to(0,\infty)$ we have $y\sim h(x)$, and hence
\[
 \frac{\dy}{\dx} \sim h'(x) = \Theta(1).
\]

Case 3: $x\rightarrow \infty$. This case is more difficult, but for our heuristic we assume that the maximum matching is at least approximately given by the first moment threshold in the random hypergraph, i.e., by solving
\[
 \binom{m}{t} \frac{(n)_{at}}{(n)_a^t} \approx 1,
\]
where $(n)_k$ is the falling factorial $n!/(n-k)!$, and the ratio above is the probability that $t$ randomly chosen $a$-sets are disjoint. In turn this gives
\begin{equation}\label{eq:xy}
 \log x - \log y + 1 +o(1) = y(1+O(at/n)) = \Theta(y),
\end{equation}
and we arrive at
\[
 y = \Theta(\log x) \text{\quad and\quad} \frac{\dy}{\dx} = \Theta(1/x),
\]
with the implicit constants being $1+o(1)$ when $x=n^{o(1)}$.

Let us now turn to (II), considering how the $(a-1)$-bounded chromatic number of the rest of the graph, which we treat simply as $G_{n',p}$, $n'=n-at$, varies as $t$, and hence $n'$, varies. Heuristically, we assume the actual number of colours needed will be essentially the relevant first moment threshold, or rather the approximation $k^*$ from Lemma~\ref{lem:polylog}.

If there are $n'$ vertices left, then for each extra vertex covered by $a$-sets we expect to need $\tfrac{\dd k^*}{\dd n}|_{n=n'}$ fewer colours. We temporarily write $\gamma$ for the \emph{reciprocal} of this quantity. From Lemma~\ref{lem:polylog} and Remark~\ref{rem:noks} (which tells us that we can replace $\alpha(n)$ by $a$ in \eqref{noks}) we have
\begin{eqnarray*}
 \gamma &=& a -1 + \frac{\log\mu_a(n')}{\log n'-\log\log n'} + O(1/\log n') \\
 &=& a -1 + \frac{\log\mu_a(n')}{\log n-\log\log n} + O(1/\log n)
\end{eqnarray*}
since we'll always have $n'=\Theta(n)$.

It is convenient to work in terms of $\mu_{a+1}=\Theta(\mu_a \log n/ n)$. Let $\mu' = \mu_{a+1}(n')$, then 
\begin{eqnarray*}
 \gamma &=& a + \frac{\log\mu'}{\log n-\log\log n} + O(1/\log n) \\
 &=& a+(1+o(1)) \frac{\log\mu'}{\log n},
\end{eqnarray*} 
since we'll see later that $-\log\mu'$
is at minimum at least $\omega(1)$ (in fact at least order $\log\log n$).
This gives
\[
 \frac{\dd k^*}{\dd n} = \gamma^{-1} = a^{-1} - (1+o(1))\frac{\log \mu'}{a^2\log n}.
\]
So each extra $a$-set in the matching should save $a$ times this many colours, minus the one used for the set itself, giving `benefit' (per $a$-set used)
\[
 B \sim  \frac{-\log \mu'}{a\log n} \sim \frac{-\log \mu'}{c_0\log^2 n}
\]
where $c_0=2/\log 2$.

Now
\[
 \mu_{a+1}(n) = \Theta(\mu_a(n)\log n/n)  = \Theta(x/\log n).
\]
In all cases, writing $\approx$ for agreement up to constant factors,
\[
 \frac{\mu'}{\mu_{a+1}(n)} \sim (n'/n)^{a+1} \approx (n'/n)^a = (1-at/n)^a = (1-2y/a)^a.
\]
In cases 1 and 2, where $x$ and hence $y$ are $O(1)$, this is $\Theta(1)$ and hence irrelevant. In these cases we thus have $\mu'=\Theta(x/\log n)$, so $-\log \mu'\sim \log\log n + |\log x|$. Thus
\[
 B \sim \frac{\log\log n+|\log x|}{c_0\log^2 n}.
\]

In case 3, when $x$ grows but not too quickly, say $x=n^{o(1)}$, then $y=o(\log n)$ and hence, from \eqref{eq:xy}, $y\sim \log x$. Then 
\[
  \frac{\mu'}{\mu_{a+1}(n)} \approx (1-2y/a)^a = \exp(-(2+o(1))\log x),
\]
so $\mu'$ is roughly $1/(x\log n)$, with asymptotic agreement in the logarithms. In this case we thus obtain
\[
 B \sim \frac{\log\log n+\log x}{c_0\log^2 n}.
\]
Finally, if $x$ is at least $n^{\Omega(1)}$, then cruder estimates give $\mu'=n^{-\Omega(1)}$, so $-\log\mu'=\Theta(\log x)$. In this case
\[
 B = \Theta\left(\frac{\log x}{\log^2n}\right).
\]

In all cases, multiplying $\sqrt{\mu_a}$, our estimate for how much the number $m$ of independent $a$-sets varies, by $\tfrac{\dy}{\dx}=\tfrac{\dt}{\dm}$, and then by $B$, gives our estimate for $g(n)$, the standard deviation of $\chi(\Gnp)$.

\subsection{Complications}

In this subsection we discuss a number of issues that arise when attempting to understand the behaviour of $\chi(\Gnp)$ even more precisely. First, we should note that in any attempt at \emph{proving} Conjecture~\ref{conj:4cases}, there are major problems with the heuristic above. The key one is that, having removed some collection of independent $a$-sets, the graph that remains certainly does not have the same distribution as $G_{n',p}$ for appropriate $n'$. But even at the intuitive level, there are additional complications.

For one thing, the alert reader may have noticed that our heuristic above does not make sense in case 3 when $x$ is too large, in particular when $\mu_a\ge n^{1+x_0}$, the point up to which the naive optimum profile wants us to use more disjoint $a$-sets than can be found. Here we justify our prediction rather by the heuristic in \S\ref{ss:zigzag}. With $t$ fixed, then as $m=X_a$ varies, the number of ways of choosing $t$ (disjoint) $a$-sets varies, and this translates into variation in the chromatic number. Fortunately, for $\mu_a=n^{1+\Theta(1)}$ the two predictions agree within a constant factor, so we do not need to resolve exactly how they interact.

This same effect arises in other cases, however. Suppose we have a strategy for partially colouring with $a$-sets where we use a slightly smaller than maximum matching, so there are $N\gg 1$ choices for this matching. Then we might expect to find a colouring if the expected number of $(a-1)$-bounded colourings of the remaining $n'$ vertices is roughly $1/N$. As noted earlier, from Corollary~\ref{dnk}, for given $n'$ we should expect the extra $N$ choices to lead to a reduction in $k^*$ of around $\log N/(c_0\log^2 n')=\Theta(\log N/\log^2 n)$.

Considering the simpler case in which almost all $a$-sets are disjoint, we have $N\approx \binom{m}{t}$, so there is a large increase in the number of choices for leaving out the first few $a$-sets. Our calculations suggest that in this range we will leave out order $\Theta(\mu_a^2\log n/n)$ $a$-sets from a maximum matching. This will affect the chromatic number significantly, but we do not expect it to lead to a significant change in the variance of the chromatic number.

A further issue is that in our case $x\to\infty$, the first moment threshold is not a terribly good estimate of the size of a maximum matching of $a$-sets. In the case where $x$ does not grow too quickly, a heuristic explanation is the following. Since two $a$-sets intersect with probability $\pi_0\sim a^2/n$, we expect $t$ $a$-sets to be disjoint with probability around $\exp(-\pi_0\binom{t}{2})$. However, there is some variability in the number $M$ of overlapping pairs of $a$-sets. This quantity, which is of order $M_0=\binom{m}{2}a^2/n$, varies by around $\Delta=\sqrt{M_0}$. If we condition on this number, then our new heuristic for the probability $t$ $a$-sets are disjoint is $\exp(-\pi\binom{t}{2})$ where $\pi=M/\binom{m}{2}=\pi_0(1+\Delta/M_0)$. This variation may well be significant, and it leads to a situation where the overall expectation of the number of $t$-matchings (collections of $t$ disjoint $a$-sets) is dominated by the contribution from the case where $M$ is atypically small. Hence the first moment will not be an accurate guide to the existence of a $t$-matching. We do not explore this further here since it does not seem to affect $g(n)$. However, this, and more complicated such effects, would (at least in some cases) alter $f(n)$ by a significant amount. Thus the problem of predicting, let alone proving, a `full result' $(\chi(\Gnp)-f(n))/\sqrt{g(n)} \dto N(0,1)$ with explicit functions $f$ and $g$ seems extremely difficult.

 \bibliographystyle{plainnat}

\end{document}